\theoremstyle{plain}
\newtheorem{theorem}{Theorem}[section]
\newtheorem*{theorem*}{Theorem}
\newtheorem{proposition}[theorem]{Proposition}
\newtheorem{lemma}[theorem]{Lemma}
\theoremstyle{definition}
\newtheorem{definition}[theorem]{Definition}
\theoremstyle{remark}
\newtheorem{remark}[theorem]{Remark}
\numberwithin{equation}{section}
\DeclareMathOperator{\id}{id}
\DeclareMathOperator*{\esssup}{ess\,sup}
\newcommand{\ev}{\mathbb{E}}
\newcommand{\pr}{\mathbb{P}}
\newcommand{\R}{\mathbb{R}}
\renewcommand{\P}{\mathcal{P}}
\newcommand{\F}{\mathcal{F}}
\renewcommand{\L}{\mathcal{L}}
\renewcommand{\d}{\mathrm{d}}
\newcommand{\define}{\mathpunct{:}}
\newcommand{\bb}[1]{\mathbb{#1}}
\renewcommand{\bf}[1]{\mathbf{#1}}
\renewcommand{\cal}[1]{\mathcal{#1}}
\begin{document}

\noindent
\begin{center}
    \Large
    \textbf{Particle Systems and McKean--Vlasov Dynamics \\
    with Singular Interaction through Local Times}

    \vspace{1em}

    \normalsize
    Graeme Baker\footnote[1]{Department of Statistics, Columbia University, NY, USA \href{mailto:g.baker@columbia.edu}{g.baker@columbia.edu}.},
    Ben Hambly\footnote[2]{Mathematical Institute, University of Oxford, UK \href{mailto:hambly@maths.ox.ac.uk}{hambly@maths.ox.ac.uk}.}
    \& Philipp Jettkant\footnote[3]{Department of Mathematics, Imperial College London, UK, \href{mailto:p.jettkant@imperial.ac.uk}{p.jettkant@imperial.ac.uk}.}

\end{center}

\vspace{1em}

\begin{abstract}
We study a system of reflected Brownian motions on the positive half-line in which each particle has a drift toward the origin determined by the local times at the origin of all the particles. If this local time drift is too strong, such systems exhibit a breakdown in their solutions in that there is a time beyond which the system cannot be extended. In the finite particle case we give a complete characterisation of this finite time breakdown, relying on a novel dynamic graph structure. We consider the mean-field limit of the system in the symmetric setting, which admits a McKean--Vlasov representation, and establish propagation of chaos. In the absence of breakdowns, the McKean--Vlasov equation exhibits multiple stationary and unique self-similar solutions and we prove convergence to these profiles. This work is motivated by models for liquidity in financial markets, the supercooled Stefan problem, and a toy model for cell polarisation.
\end{abstract}

\section{Introduction}

In this article, we consider a dynamic model of Brownian particles that diffuse in the half-line $[0, \infty)$ and are reflected at the origin. The particles interact through the reflection terms, pushing them closer to the reflecting boundary, leading to a positive feedback loop. If the interaction is too strong, this loop can lead to a breakdown of the system in finite time. We begin by studying networks with finitely many particles and, subsequently, consider an associated mean-field limit. 

\subsection{Finite Particle System}

The $N$ particles are labelled $i = 1$,~\ldots, $N$ and the state $X^i_t$ of particle $i$ satisfies the reflected SDE
\begin{equation} \label{eq:ps}
    X^i_t = \xi_i + W^i_t - \sum_{j = 1}^N q_{ij} L^j_t + L^i_t
\end{equation}
for nonnegative weights $q_{ij} \geq 0$. The initial conditions $\xi_1$,~\ldots, $\xi_N$ are nonnegative random variables, $(W^1, \dots, W^N)$ is an $N$-dimensional Brownian motion with covariance matrix $A = (a_{ij})_{ij} \in \R^{N \times N}$ and independent of $(\xi_1, \dots, \xi_N)$, and the processes $L^1$,~\ldots, $L^N$ are reflection terms, which ensure that each $X^i_t$ stays nonnegative. We define the filtration $\bb{F}^N = (\F^N_t)_{t \geq 0}$ by
\begin{equation*}
    \F^N_t = \sigma\Bigl(\xi_i, W^i_s \define s \in [0, t],\, i \in \{1, \dots, N\}\Bigr).
\end{equation*}
Also, we set $\bf{X} = (X^1, \dots, X^N)$ and $\bf{L} = (L^1, \dots, L^N)$, and define the \textit{adjacency matrix} $Q = (q_{ij})_{ij}$, which allows us to rewrite SDE \eqref{eq:ps} in the form
\begin{equation} \label{eq:ps_vector}
    \bf{X}_t = \bf{X}_0 + \bf{W}_t + (\bb{I} - Q)\bf{L}_t,
\end{equation}
where $\bb{I} \in \R^{N \times N}$ denotes the identity matrix in $N$ dimensions.

The process $\bf{X}$ is a \textit{semimartingale reflected Brownian motion} (SRBM), i.e.\@ an obliquely reflected semimartingale process in the first orthant $[0, \infty)^N$. Harrison \& Reiman \cite{harrison_reflected_1981} established global well-posedness for SRBMs for adjacency matrices $Q$ whose largest real eigenvalue $\rho(Q)$ is strictly less than one. The objective of this work is to analyse the system when $\rho(Q) \geq 1$. In this previously unexplored case, the system's behaviour changes radically as it will typically no longer be globally well-posed. Let us illustrate it with a simple example: let $N = 2$, $q_{ij} = 1 - \delta_{ij}$ for $i$, $j \in \{1, 2\}$, and $A = \bf{I}$, so that $W^1$ and $W^2$ are independent standard Brownian motions. Suppose that SDE \eqref{eq:ps} has a global solution $(X^1_t, X^2_t)$. Then summing $X^1_t$ and $X^2_t$, we obtain
\begin{equation} \label{eq:ex_breakdown}
    X^1_t + X^2_t = \xi_1 + W^1_t  + L^1_t - L^2_t + \xi_2  + W^2_t + L^2_t - L^1_t = (\xi_1 + \xi_2) + (W^1_t + W^2_t).
\end{equation}
The left-hand side has to be nonnegative for $t \geq 0$ due to the reflection at the origin. However, the right-hand side becomes negative for some sufficiently large $t > 0$, resulting in a contradiction. Consequently, a \textit{breakdown} has to occur in finite time, almost surely. While the impossibility of solving SDE \eqref{eq:ps} globally was already pointed out by Harrison \& Reiman at the end of Section 3 in \cite{harrison_reflected_1981}, they do not discuss whether SDE \eqref{eq:ps} may still be locally solvable when $\rho(Q) \geq 1$ and, if so, what causes the breakdown. We show in this work that local solutions for SDE \eqref{eq:ps} indeed exist for $\rho(Q) \geq 1$ and provide a precise characterisation of the breakdown time in terms of the number of particles located at the origin. If this number is too large, the feedback $\sum_{j = 1}^N q_{ij} L^j_t$ from the system is equally as strong as or stronger than the individual reflection term $L^i_t$. Thus, the reflection term can no longer compensate the feedback, and the system breaks down. Accordingly, SDE \eqref{eq:ps} will only be understood to hold on some random interval $[0, \tau)$ for a stopping time $\tau$ with values in $[0, \infty]$. 

Before we state a precise notion of solution for SDE \eqref{eq:ps}, we set up some notation and introduce a dynamic graph structure determined by the interplay of $Q$ and $A$, essential to the understanding of the breakdown phenomenon. For a matrix $B = (b_{ij})_{ij} \in \R^{d \times d}$ with $d \geq 1$, we let its \textit{spectral radius} $\rho(B)$ denote the largest real eigenvalue of $B$. If $B$ has no real eigenvalues, we set $\rho(B) = -\infty$. Typically, the matrices under consideration will have nonnegative entries, in which case the Perron--Frobenius theorem implies that $\rho(B) \geq 0$ and that there exists a left-eigenvector corresponding to $\rho(B)$ with nonnegative entries. Next, for a nonempty subset $I \subset \bf{N}: = \{1, \dots, N\}$ and $B = (b_{ij})_{ij} \in \R^{N \times N}$, we define the \textit{reduced matrix} or \textit{minor} $B[I] = (b_{ij})_{i, j \in I} \in \R^{I \times I}$. If $I = \emptyset$, then $B[I]$ is the empty matrix whose spectral radius we set to $-\infty$ by convention. Usually, the set $I$ will consist of the particles that are currently located at zero and are therefore, in principle, the only ones whose reflection term may increase. Note, however, that due to possible degeneracies of the covariance matrix $A$, even a particle located at the origin need not see any activity in its reflection term. Thus, within a given set $I$, we must further distinguish between active nodes whose reflection term increases and inactive nodes for which this is not the case. This distinction rests on the interaction between the weight matrix $Q$ and the covariance matrix $A$, which determines a graph structure that we elaborate next. As before, let $I$ be a nonempty subset of $\bf{N}$. Then the minor $Q[I]$ induces a directed graph $(I, E_I)$, whose set of edges is given by $E_I = \{(i, j) \in I \times I \define q_{ji} > 0\}$. Note the inversion of indices in the definition of $E_I$, corresponding to the fact that we study left-eigenvectors of $Q$. We define the set of \textit{active nodes} $I^a$ as those $i \in I$ such that there exists a $j \in I$ with $a_{jj} > 0$ and a path $(i_0, i_1, \dots, i_n)$ in $E_I$ from $j$ to $i$, i.e.\@
\begin{enumerate}[noitemsep, label = (\roman*)]
    \item $i_k \in I$ for $k = 1$,~\ldots, $n - 1$;
    \item $i_0 = j$ and $i_n = i$;
    \item $(i_k, i_{k + 1}) \in E_I$ for $k = 0$,~\dots, $n - 1$.
\end{enumerate}
That is, a node $i \in I$ is active if it is reachable in $(I, E_I)$ from a node $j \in I$ whose Brownian motion $W^j$ is nondegenerate. If $I$ consists of the particles stuck at zero, then, as we alluded to above, only the active nodes will see their reflection terms increase, since they are the only ones exposed to the fluctuations of a nondegenerate Brownian motion. To the best of our knowledge, this \textit{dynamic} graph structure is a novel tool in the analysis of SRBMs. Lastly, for a nonempty $I \subset \bf{N}$, we let $\cal{E}(I)$ be the set of left-eigenvectors $v \in [0, \infty)^I$ of $Q[I]$ corresponding to the eigenvalue $\rho(Q[I])$. Note that by the Perron--Frobenius theorem, we have $\cal{E}(I) \neq \emptyset$. Moreover, if the minor $Q[I]$ is irreducible, meaning that the graph $(I, E_I)$ is strongly connected, then there will be a unique left-eigenvector up to scalar multiples.

Next, we recall the classical Skorokhod problem, which underlies reflected SDEs.

\begin{definition} \label{def:skorokhod}
Let $\tau \in [0, \infty]$ be a stopping time and let $Z = (Z_t)_{t \in [0, \tau)}$ be a c\`adl\`ag stochastic process with $Z_0 \geq 0$. We say that a tuple of c\`adl\`ag stochastic processes $(X, L) = (X_t, L_t)_{t \in [0, \tau)}$ solves the (one-dimensional) \textit{Skorokhod problem} for $Z$ if $L$ is a nondecreasing process started from zero and for all $t \in [0, \tau)$ we have
\begin{enumerate}[noitemsep, label = (\roman*)]
    \item $X_t = Z_t + L_t \geq 0$;
    \item $\int_0^t \bf{1}_{\{X_s > 0\}} \, \d L_s = 0$.
\end{enumerate}
\end{definition}

In this simple one-dimensional setting, the Skorokhod problem for a c\`adl\`ag stochastic process $Z$ with $Z_0 \geq 0$ has a unique solution $(X, L)$ given by $L_t = \sup_{0 \leq s \leq t} (Z_s)_-$ and $X_t = Z_t + L_t$ for $t \in [0, \tau)$.

\begin{definition} \label{def:sol_ps}
A pair $(\bf{L}, \tau)$ consisting of an $N$-dimensional continuous $\bb{F}^N$-adapted stochastic process $\bf{L} = (L^1, \dots, L^N)$ and an $\bb{F}^N$-stopping time $\tau$ form a solution of SDE \eqref{eq:ps} if 
\begin{enumerate}[noitemsep, label = (\roman*)]
    \item \label{it:solution_property} the solution $(X^i, F^i)$ to the Skorokhod problem for $\xi_i + W^i - \sum_{j = 1}^N q_{ij} L^j$ on $[0, \tau)$ satisfies $F^i_t = L^i_t$ for $t \in [0, \tau)$ and all $i \in \bf{N}$;
    \item \label{it:minimality_property} we have $L^i_t = \int_0^t \bf{1}_{\{i \in I^a_s\}} \, \d L^i_s$ for $t \in [0, \tau)$ and $i \in \bf{N}$,
\end{enumerate}
where $I_t = \{i \in \bf{N} \define X^i_t = 0\}$ for $t \in [0, \tau)$ and $I_t^a$ denotes the associated active nodes. We write $Q_t = Q[I^a_t]$ for the matrix of \textit{active weights} and refer to $(\bf{X}, \bf{L})$, with $\bf{X} = (X^1, \dots, X^N)$, as the solution to the Skorokhod problem associated to $(\bf{L}, \tau)$.

A solution $(\bf{L}, \tau)$ to SDE \eqref{eq:ps} is called \textit{maximal} if for any other solution $(\bf{L}', \tau')$ with $\bf{L}'_t = \bf{L}_t$ for $t \in [0, \tau' \land \tau)$, it holds that $\tau \geq \tau'$.
\end{definition}

\begin{remark}
The minimality property \ref{it:minimality_property} ensures that only active nodes see their reflection term increase. If this property were not imposed, there could be feedback loops between inactive nodes that lead to an artificial increase in the reflection terms. As an example consider a system with $N = 2$ particles, initial conditions $\xi_1 = \xi_2 = 0$, (degenerate) Brownian motions $W^1 = W^2 = 0$ and weights $q_{ij} = 1 - \delta_{ij}$ for $i$, $j \in \{1, 2\}$. Then, for any continuous nondecreasing $\bb{F}^N$-adapted process $F$, the pair $(\bf{L}, \infty)$ with $L^1_t = L^2_t = F_t$ satisfies \ref{it:solution_property}. However, since $\bf{W}$ vanishes, both nodes are inactive, so the only pair $(\bf{L}, \infty)$ meeting the minimality property \ref{it:minimality_property} is given by $L^1 = L^2 = 0$. In particular, the minimality condition restores uniqueness. Note that \ref{it:minimality_property} is trivially satisfied if $\rho(Q) < 1$, since these types of feedback loops can only occur when $Q$ admits a minor whose spectral radius is equal to one.
\end{remark}

Note that even though $\bf{L}$ is only defined on the interval $[0, \tau)$, the first component $\bf{X} = (X^1, \dots, X^N)$ of the solution to the Skorokhod problem associated to $(\bf{L}, \tau)$ can be extended to a continuous function on $C([0, \tau[)$, where for $T \in [0, \infty]$, the interval $[0, T[$ is defined by $[0, T]$ if $T < \infty$ and $[0, T)$ otherwise. This can be seen by applying It\^o's formula to $\lvert X^i_t\rvert^2$ and inspecting the resulting expression. In particular, the index set $I_{\tau} = \{i \in \bf{N} \define X^i_{\tau} = 0\}$, the set of active nodes $I_{\tau}^a$, and the matrix $Q_{\tau} = Q[I_{\tau}^a]$ are all well-defined.

\begin{theorem} \label{thm:exist_unique_ps}
SDE \eqref{eq:ps} has a unique maximal solution $(\bf{L}, \tau_N)$. Moreover,
\begin{enumerate}[noitemsep, label = (\roman*)]
    \item \label{it:maximality_cond_ps} $\rho(Q_t) < 1$ for $t \in [0, \tau_N)$ a.s.\@ and $\rho(Q_{\tau_N}) \geq 1$ if $\tau_N < \infty$;
    \item \label{it:positive_blow_up_ps} $\tau_N > 0$ if and only if $\rho(Q_0) < 1$;
    \item \label{it:finite_blow_up_ps} $\tau_N < \infty$ if and only if one of the following holds
    \begin{enumerate}[noitemsep, label = (\roman*)]
        \item[(a)] \label{it:larger_than_1} $\rho(Q[\bf{N}^a]) > 1$;
        \item[(b)] \label{it:nondegenerate_vol} $\rho(Q[\bf{N}^a]) = 1$ and for all $I \subset \bf{N}$ with $\rho(Q[I^a]) = 1$, there exists $v \in \cal{E}(I^a)$ such that $v^{\top} A[I^a] v > 0$;
        \item[(c)] \label{it:starting_from_zero} $\rho(Q_0) = 1$.
    \end{enumerate}
\end{enumerate}
\end{theorem}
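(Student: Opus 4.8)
The plan is to prove the four assertions in order, the running tool being the collapse of the reflection terms under a weighted sum. For a Perron--Frobenius left-eigenvector $v \in \cal{E}(I^a)$, the relation $v^{\top}Q[I^a] = \rho(Q[I^a])v^{\top}$ turns $\sum_{i \in I^a} v_i X^i_t$ into a free part plus $(1 - \rho(Q[I^a]))\sum_{i \in I^a} v_i L^i_t$, so the sign of $1-\rho$ dictates whether the reflection terms can keep the system nonnegative.

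\textbf{Construction, uniqueness, and \ref{it:maximality_cond_ps}.} The engine is the Harrison--Reiman theory \cite{harrison_reflected_1981}: for nonempty $J \subseteq \bf{N}$ with $\rho(Q[J]) < 1$ the obliquely reflected Skorokhod problem in $[0,\infty)^J$ with reflection matrix $\bb{I}-Q[J]$ is uniquely and Lipschitz-continuously solvable. I would build $(\bf{L},\tau_N)$ by localization and concatenation: from a configuration with $\rho(Q_s)<1$, particles away from the origin run freely until they hit it; inactive particles at the origin keep frozen reflection terms, which is consistent because the edge structure shows an inactive node only feeds, and is only fed by, other inactive nodes; and the active particles at the origin solve an SRBM on the sub-orthant $[0,\infty)^{I_s^a}$ with reflection matrix $\bb{I}-Q[I_s^a]$, valid since $\rho(Q[I_s^a]) = \rho(Q_s) < 1$. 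Patching these pieces and letting $\tau_N$ be the first time the reduction fails gives a solution with $\rho(Q_t)<1$ on $[0,\tau_N)$; uniqueness on a common interval follows stage by stage from uniqueness of the Skorokhod problem, with the minimality property \ref{it:minimality_property} ruling out the spurious inactive feedback loops of the Remark after Definition~\ref{def:sol_ps}. If $\tau_N<\infty$ and $\rho(Q_{\tau_N})<1$, restarting the construction at $\tau_N$ from $\bf{X}_{\tau_N}$ would extend the solution past $\tau_N$, contradicting maximality; hence $\rho(Q_{\tau_N}) \geq 1$, which is \ref{it:maximality_cond_ps}.

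\textbf{Part \ref{it:positive_blow_up_ps}.} If $\rho(Q_0)<1$ the construction yields $\tau_N>0$. Conversely, suppose $\rho(Q_0)=\rho(Q[I_0^a])\geq 1$ and fix $v \in \cal{E}(I_0^a)$. For any solution, on a right-neighbourhood of $0$ the only feedback into $I_0^a$ comes from $I_0^a$ itself (particles outside $I_0$ are positive, inactive ones inside $I_0$ are frozen) and $\xi_i=0$ for $i \in I_0$, so the weighted sum collapses to
\[
    0 \leq \sum_{i \in I_0^a} v_i X^i_t = \sum_{i \in I_0^a} v_i W^i_t + \bigl(1 - \rho(Q_0)\bigr)\sum_{i \in I_0^a} v_i L^i_t \leq \sum_{i \in I_0^a} v_i W^i_t.
\]
If $v^{\top}A[I_0^a]v>0$, the right-hand side is a nondegenerate continuous martingale, hence strictly negative at arbitrarily small times a.s., a contradiction; so $\tau_N=0$. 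If $v^{\top}A[I_0^a]v=0$, then $\sum_i v_i W^i\equiv 0$, so the inequality forces $X^i\equiv 0$ on $S:=\supp(v)$ near $0$; restricting the eigenvector equation to $S$ gives $\rho(Q[S])=1$, and with all of $S$ pinned at the origin the restricted reflection terms must satisfy an overdetermined identity (of the type ``$\sum_{i\in S}(\text{signed }L^i)$ equals a process of unbounded variation,'' or in the fully degenerate case must absorb the unbounded push propagated from a nondegenerate node reaching $S$), incompatible with the monotonicity of $\bf{L}$; so again $\tau_N=0$.

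\textbf{Part \ref{it:finite_blow_up_ps} and the main obstacle.} Case \ref{it:starting_from_zero} follows from \ref{it:positive_blow_up_ps}. For \ref{it:larger_than_1}, take $v\in\cal{E}(\bf{N}^a)$; indices outside $\bf{N}^a$ are inactive with vanishing reflection terms, so on $[0,\tau_N)$ the sum over $i\in\bf{N}^a$ collapses to $\sum_i v_i\xi_i + \sum_i v_iW^i_t + (1-\rho(Q[\bf{N}^a]))\sum_i v_iL^i_t$ with $1-\rho(Q[\bf{N}^a])<0$; on $\{\tau_N=\infty\}$, nonnegativity either is violated eventually (if $\sum_i v_iW^i\not\equiv 0$) or forces $\sum_i v_iL^i_t$ bounded, but $L^i$ cannot stay bounded for $i\in\supp(v)$ because each such node is reachable in $E_{\bf{N}}$ from a nondegenerate node whose own reflection term grows to infinity and this growth propagates down the path; hence $\tau_N<\infty$. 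For \ref{it:nondegenerate_vol}, the hypothesis at $I=\bf{N}$ gives $v\in\cal{E}(\bf{N}^a)$ with $v^{\top}A[\bf{N}^a]v>0$, and since $\rho(Q[\bf{N}^a])=1$ the weighted sum is exactly $\sum_i v_i\xi_i+\sum_i v_iW^i_t$, a nondegenerate Brownian motion, which becomes negative in finite time a.s., so $\tau_N<\infty$. Conversely, if none of \ref{it:larger_than_1}--\ref{it:starting_from_zero} holds: when $\rho(Q[\bf{N}^a])<1$, the active set is always contained in $\bf{N}^a$, so $\rho(Q_t)\leq\rho(Q[\bf{N}^a])<1$ throughout and \ref{it:maximality_cond_ps} is never triggered, giving $\tau_N=\infty$; when $\rho(Q[\bf{N}^a])=1$ but some $I_*$ with $\rho(Q[I_*^a])=1$ admits only degenerate eigenvectors, one must show the breakdown cannot be triggered, namely that every critical active configuration the dynamics can reach is degenerate and that along such a configuration the constrained evolution reduces to a well-posed reflection in a lower-dimensional region (the degenerate analogue of the two-particle example of the introduction, where $X^1+X^2$ is conserved), so the solution continues globally and $\tau_N=\infty$. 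I expect this last point to be the main obstacle: identifying precisely which critical configurations are reachable under the dynamic graph, showing the reachable degenerate ones are harmless, and making the propagation-of-reflection argument for \ref{it:larger_than_1} quantitative along paths of $E_{\bf{N}}$.
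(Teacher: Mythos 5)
Your overall architecture (local solvability when the active spectral radius is below one, a restart-to-extend argument for maximality and for $\rho(Q_{\tau_N})\geq 1$, and Perron--Frobenius weighted sums $\sum_i v_iX^i_t$ to detect breakdowns) coincides with the paper's, the main cosmetic difference being that you invoke Harrison--Reiman on the active sub-orthant where the paper runs a self-contained contraction in a weighted sup-metric. The genuine gap is the only-if direction of \ref{it:finite_blow_up_ps} in the critical regime, which you flag as the main obstacle but for which your plan is misdirected: you propose to show that every reachable critical active configuration is degenerate and that along it ``the constrained evolution reduces to a well-posed reflection \ldots{} so the solution continues globally''. By your own property \ref{it:maximality_cond_ps}, the first time $\rho(Q_t)\geq 1$ \emph{is} $\tau_N$; there is no continuation through a critical configuration within this solution concept, so the statement you set out to prove is not the right target (and your recitation of the hypotheses also drops $\rho(Q_0)<1$, without which $\tau_N=\infty$ is simply false). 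The paper's resolution is short and attempts no continuation: assuming all critical sets carry only degenerate eigenvectors, the weighted sum $\sum_{i\in I^a}v_iX^i_t=\sum_{i\in I^a}v_i\xi_i$ is \emph{conserved}, so $\tau_N<\infty$ with $X^i_{\tau_N}=0$ on $I^a_{\tau_N}$ forces $\xi_i=0$ there, hence $I^a_{\tau_N}\subset I^a_0$ and $\rho(Q_0)\geq\rho(Q_{\tau_N})\geq 1$, i.e.\@ case \ref{it:starting_from_zero} with $\tau_N=0$. No reachability analysis or lower-dimensional well-posedness is needed.

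The degenerate-noise subcases in your argument for \ref{it:positive_blow_up_ps} (and in the analogue of Lemma \ref{lem:blow_up_time_max} that your uniqueness/maximality claims implicitly require for \emph{arbitrary} solutions) are also not nailed down. The paper's mechanism is Lemma \ref{eq:pos_eig_entry}: propagating positivity of the eigenvector backwards along a path in $E_{I^a}$ from a node with $a_{jj}>0$ shows that $\supp(v)$ always contains a node $i$ with $a_{ii}>0$; once the conserved weighted sum pins $X^i\equiv 0$ on a nontrivial interval, a continuous martingale with strictly positive quadratic variation equals a finite-variation process, a contradiction. Your stated obstruction (``an overdetermined identity \ldots{} incompatible with the monotonicity of $\bf{L}$'') is not the correct mechanism --- monotonicity of the reflection terms is not what fails --- and the ``push propagated from a nondegenerate node reaching $S$'' needs precisely this eigenvector-positivity lemma to be usable, which your sketch does not supply. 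Your quantitative propagation of $L^i_t\to\infty$ along graph paths for case \ref{it:larger_than_1} is the same induction the paper performs (and also leaves terse), so that part is fine in spirit.
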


The above implications and equivalences are understood in a probabilistic sense. For instance, by $\rho(Q_{\tau_N}) \geq 1$ if $\tau_N < \infty$ we mean $\pr(\rho(Q_{\tau_N}) < 1,\, \tau_N < \infty) = 0$. We will refer to the unique maximal solution of SDE \eqref{eq:ps} simply as the unique solution. In Proposition \ref{prop:comparison_ps} below, we establish a comparison result for SDE \eqref{eq:ps}, which shows the solution to SDE \eqref{eq:ps} is monotonic in its initial condition and the entries of the interaction matrix $Q$.

\begin{remark}
Note that Theorem \ref{thm:exist_unique_ps} does not clarify whether some of the components of $\bf{L}$ explode to infinity as $t \nearrow \tau_N$. While we do not expect this to be the case, we have not been able to prove that $\bf{L}$ stays bounded as $t \nearrow \tau_N$, except in special cases. 
\end{remark}

\begin{remark} \label{rem:regimes}
According to the behaviour of SDE \eqref{eq:ps} described by Theorem \ref{thm:exist_unique_ps} \ref{it:finite_blow_up_ps}, we can distinguish between three regimes: the subcritical regime, $\rho(Q[\bf{N}^a]) < 1$, in which SDE \eqref{eq:ps} is globally well-posed, the supercritical regime $\rho(Q[\bf{N}^a]) > 1$, in which SDE \eqref{eq:ps} breaks down in finite time, and the critical regime $\rho(Q[\bf{N}^a]) = 1$, in which the behaviour of the SDE is contingent on the interaction with the covariance matrix $A$ and the initial condition. Let us further illuminate the critical case with the aid of the previous example, where $N = 2$ and $q_{ij} = 1 - \delta_{ij}$ for $i$, $j \in \{1, 2\}$. In the case that $W^1$ and $W^2$ are perfectly anticorrelated standard Brownian motions, i.e.\@ $W^1 = -W^2$, the system's mean is preserved, since
\begin{equation*}
    X^1_t + X^2_t = \xi_1 + \xi_2.
\end{equation*}
Consequently, unless the system is started from the origin, meaning $\xi_1 = \xi_2 = 0$, the two particles will never be located at zero simultaneously, leading to global-in-time existence, consistent with Theorem \ref{thm:exist_unique_ps}. Indeed, the only subset $I$ of $\bf{N} = \{1, 2\}$ for which $\rho(Q[I]) = 1$ is $\bf{N}$ itself. But the only eigenvector (up to scalar multiples) of $Q$ corresponding to $\rho(Q) = 1$ is $v = (1, 1)$, for which we have $v^{\top} A v = 0$. Hence, condition \ref{it:finite_blow_up_ps} (b)
%\ref{it:nondegenerate_vol} 
of Theorem \ref{thm:exist_unique_ps} is violated, allowing the system to exist for all times. Only if $\xi_1 = \xi_2 = 0$, so that Theorem \ref{thm:exist_unique_ps} \ref{it:finite_blow_up_ps} (c)
% \ref{it:starting_from_zero} 
holds, does the system break down. Indeed, by Theorem \ref{thm:exist_unique_ps} \ref{it:positive_blow_up_ps}, the breakdown is immediate. 

As we saw earlier (cf.\@ the discussion below \eqref{eq:ex_breakdown}), a breakdown occurs in this two-dimensional example regardless of the initial condition, if $W^1$ and $W^2$ are independent standard Brownian motions, so that $A = \bb{I}$. Then $v^{\top} A v = 2$, so Theorem \ref{thm:exist_unique_ps} \ref{it:finite_blow_up_ps} (b)
% \ref{it:nondegenerate_vol} 
implies a breakdown in finite time. While the fact, that a breakdown occurs in finite time in the critical regime if the covariance matrix is ``sufficiently'' nondegenerate, generalises to finite systems of any size, a phase transition occurs once we pass to the mean-field limit as $N \to \infty$. We will hint at this phenomenon in the following remark and further discuss it in Remark \ref{rem:phase_transition} below, after formally introducing the mean-field limit in Subsection \ref{sec:mfl}. 
\end{remark}

\begin{remark} \label{rem:resource_sharing}
SDE \eqref{eq:ps} can be interpreted as a model for resource sharing. The reflection term $L^i_t$ represents the resources particle $i$ is provided with by the system, while $\sum_{j = 1}^N q_{ij} L^j_t$ are the resources the particle distributes to others. If no resources are exogenously provided to or withdrawn from the system, it must hold that $L^i_t = \sum_{j = 1}^N q_{ji} L^i_t$. This is guaranteed if $\bf{1}^{\top} Q = \bf{1}^{\top}$, i.e.\@ the vector $\bf{1} = (1, \dots, 1)^{\top} \in \R^N$ is a left-eigenvector of $Q$ with corresponding eigenvalue $1$. Then, by Theorem \ref{thm:exist_unique_ps}, the system breaks down in finite time if $\bf{1}^{\top} A \bf{1} > 0$. In fact, summing up SDE \eqref{eq:ps} over $i = 1$,~\ldots, $N$, and using that $\bf{1}^{\top} Q = \bf{1}^{\top}$, we see that
\begin{equation*}
    \sum_{i = 1}^N X^i_t = \sum_{i = 1}^N \xi_i + \sum_{i = 1}^N W^i_t = \zeta + \sigma B_t,
\end{equation*}
where $\zeta = \sum_{i = 1}^N \xi_i$, $B$ is a Brownian motion, and $\sigma^2 = \bf{1}^{\top} A \bf{1}$. Thus, the breakdown has to occur before or at the hitting time $\varrho = \inf\{t > 0 \define \sigma B_t \leq -\zeta\}$, which is finite as soon as $\bf{1}^{\top} A \bf{1} > 0$ or $\zeta = 0$. If we wish to ensure that the system survives for as long as possible, namely up to the hitting time $\varrho$ itself, we require that $\rho(Q[I]) < 1$ for all $I \subset \bf{N}$ with $\#I < N$. This is, for instance, satisfied for the choice $q_{ij} = 1/N$. On the other extreme, if $Q$ is the identity matrix in $N$ dimensions, then SDE \eqref{eq:ps} reduces to $X^i_t = \xi_i + W^i_t$ and $\tau_N$ occurs as soon as one of the particles hits the origin. The resource sharing agreement $q_{ij} = 1/N$ leads to diversification that allows the system to remain alive longer. Note that by the reflection principle
\begin{equation*}
    \pr(\varrho \leq t) = \ev[\pr(\varrho \leq t \vert \zeta)] = 2\ev\biggl[1 - \Phi\biggl(\frac{\zeta}{\sigma\sqrt{t}}\biggr)\biggr]
\end{equation*}
for $t > 0$, where $\Phi$ denotes the cumulative distribution function of a standard normal random variable. In particular, if the initial conditions are i.i.d.\@ with positive mean and the covariance matrix $A$ is the identity matrix, then $\frac{\zeta}{N} \to \ev[\xi_1]$ as $N \to \infty$ and $\frac{\sigma}{\sqrt{N}} = 1$, so that
\begin{equation*}
    \frac{\zeta}{\sigma \sqrt{t}} = \sqrt{N} \frac{\zeta}{N} \frac{\sqrt{N}}{\sigma \sqrt{t}}\to \infty
\end{equation*}
as $N \to \infty$. Consequently, for any $t>0$, we have $\pr(\varrho \leq t) \to 0$ as $N \to \infty$. In the limit, the diversification prevents breakdowns altogether.
\end{remark}

\subsection{Mean-Field Limit} \label{sec:mfl}

Next, we will consider a mean-field limit corresponding to SDE \eqref{eq:ps} in the symmetric case, where the matrix $Q$ is given by $Q_{ij} = \frac{\alpha}{N}$, $i$, $j \in \bf{N}$, for an interaction strength $\alpha \geq 0$, the initial conditions $\xi_1$,~\ldots, $\xi_N$ are independent and identically distributed, and $A = \bb{I}$, so $\bf{W}$ is a standard $N$-dimensional Brownian motion. Then SDE \eqref{eq:ps} reads
\begin{equation} \label{eq:ps_symmetric}
    X^i_t = \xi_i + W^i_t - \alpha \bar{L}^N_t + L^i_t,
\end{equation}
where $\bar{L}^N_t = \frac{1}{N} \sum_{i = 1}^N L^i_t$ is the empirical average of the particles' reflection terms. Moreover, it holds that $I=I^a$ and $\rho(Q[I]) = \alpha \#I/N$ for any nonempty $I \subset \bf{N}$, so $\rho(Q_t) = \alpha \# I_t/N$. Hence, by Theorem \ref{thm:exist_unique_ps}, we have $\# I_t < N/\alpha$ for $t \in [0, \tau_N)$ and the system breaks down as soon as at least $N/\alpha$ particles are simultaneously at the origin.

As the number of particles tends to infinity, we may expect that $\bar{L}^N$ tends to a theoretical mean $\ev L_t$, for the reflection term $L_t$ of a representative particle system in the mean-field limit. The dynamics of the representative particle are given by the reflected McKean--Vlasov SDE
\begin{equation} \label{eq:non_linear_skorokhod}
    X_t = \xi + W_t - \alpha \ell_t + L_t
\end{equation}
where $L$ is a reflection process, keeping $X_t$ nonnegative, and the interaction term is given by $\ell_t = \ev L_t$. We assume that the initial condition $\xi$ has the same law as $\xi_1$ and is thus, in particular, nonnegative. As for SDE \eqref{eq:ps}, we can distinguish between three regimes: the subcritical ($\alpha < 1$), the critical ($\alpha = 1$), and the supercritical ($\alpha > 1$). These map onto the corresponding regimes for SDE \eqref{eq:ps} described in Remark \ref{rem:regimes}, since by the above $\rho(Q) = \alpha$ if $q_{ij} = \frac{\alpha}{N}$.

\begin{definition} \label{def:mfl_solution}
A pair $(\ell, T) \in D[0, T) \times [0, \infty]$ is a solution to McKean--Vlasov SDE \eqref{eq:non_linear_skorokhod} if $(X, L)$ solves the Skorokhod problem for $\xi + W - \alpha \ell$ and $\ell_t = \ev L_t$ for $t \in [0, T)$. We refer to $(X, L)$ as the solution to the Skorokhod problem associated to $(\ell, T)$.

A solution $(\ell, T)$ to McKean--Vlasov SDE \eqref{eq:non_linear_skorokhod} is called \textit{maximal} if for any other solution $(\ell', T')$ with $\ell'_t = \ell_t$ for $t \in [0, T' \land T)$, we have $T \geq T'$.
\end{definition}

\begin{remark}
The first component $\ell$ of a solution $(\ell, T)$ to McKean--Vlasov SDE \eqref{eq:non_linear_skorokhod} is assumed to be an element of the space of c\`adl\`ag paths because, a priori, we do not want to restrict ourselves to continuous functions. However, as Lemma \ref{prop:sol_cont} shows, solutions are necessarily continuous.

Note further that by our remark below Definition \ref{def:skorokhod}, we have $(\ell, T) \in D[0, T) \times [0, \infty]$ solves McKean--Vlasov SDE \eqref{eq:non_linear_skorokhod} if and only if $\ell_t = \ev \sup_{0 \leq s \leq t} (\xi + W_s - \alpha \ell_s)_-$ for $t \in [0, T)$. 
\end{remark}

Our reason for including the potentially finite time horizon $T \in [0, \infty]$ in our notion of solution is that, similarly to the finite system, a finite-time breakdown of McKean--Vlasov SDE \eqref{eq:non_linear_skorokhod} is unavoidable in the supercritical case $\alpha > 1$. This can be seen in an elementary fashion.

\begin{lemma} \label{lem:blow_up}
Assume that $\alpha > 1$ and let $(\ell, T)$ be a solution to SDE \eqref{eq:non_linear_skorokhod}. Then $T < \infty$.
\end{lemma}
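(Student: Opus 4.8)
The plan is to play two bounds on the deterministic interaction curve $\ell_t = \ev L_t$ against one another: an \emph{upper} bound forced by nonnegativity of $X$, whose derivation uses $\alpha > 1$, and a \emph{lower} bound forced by the fact that the Skorokhod reflection of $\xi + W - \alpha\ell$ is at least as large as that of $\xi + W$ alone. Since the interaction drift $-\alpha\ell$ points toward the reflecting boundary, it can only \emph{increase} the amount of reflection, so $\ell$ must grow at least like $\sqrt{t}$; but over an infinite horizon $\ell$ would simultaneously have to stay bounded, which is impossible.

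For the upper bound, I would take expectations in \eqref{eq:non_linear_skorokhod}. Using $\ev W_t = 0$ and $\ell_t = \ev L_t$, this gives $\ev X_t = \ev\xi - (\alpha - 1)\ell_t$ for $t \in [0, T)$, and since $X_t \ge 0$ (part (i) of the associated Skorokhod problem) and $\alpha > 1$,
\begin{equation*}
    \ell_t \le \frac{\ev \xi}{\alpha - 1}, \qquad t \in [0, T),
\end{equation*}
which is finite under the paper's standing integrability assumption on the initial law.

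For the lower bound, I would use the explicit one-dimensional Skorokhod formula $L_t = \sup_{0 \le u \le t}(Z_u)_-$ with $Z_u = \xi + W_u - \alpha \ell_u$. Since $(Z_u)_- \ge -Z_u = \alpha\ell_u - \xi - W_u \ge -\xi - W_u$ (using $\ell_u = \ev L_u \ge 0$), taking the supremum over $u \in [0,t]$ yields $L_t \ge \sup_{0 \le u \le t}(-W_u) - \xi$. Taking expectations and invoking the reflection principle, $\ev\bigl[\sup_{0 \le u \le t}(-W_u)\bigr] = \ev\lvert W_t\rvert = \sqrt{2t/\pi}$, so $\ell_t \ge \sqrt{2t/\pi} - \ev\xi$ for $t \in [0,T)$.

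Combining the two displays, every $t \in [0, T)$ must satisfy $\sqrt{2t/\pi} \le \alpha\,\ev\xi/(\alpha-1)$, which forces $T \le \tfrac{\pi}{2}\bigl(\tfrac{\alpha\,\ev\xi}{\alpha-1}\bigr)^2 < \infty$. I do not expect any genuine obstacle here — the argument is elementary once one observes that the singular interaction drift only strengthens the lower reflection bound, and that $\alpha>1$ is exactly what turns the mean identity into a finite a priori bound on $\ell$. The only point needing care is the finiteness of $\ev\xi$, which I would draw from the standing hypotheses on the initial distribution.
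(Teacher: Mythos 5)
Your overall mechanism is the same as the paper's: force a contradiction between a square-root growth lower bound on $\ell$ (the drift $-\alpha\ell$ only increases the reflection, so $L_t \ge \sup_{0\le u\le t}(\xi+W_u)_-$) and an upper bound on $\ell$ coming from taking means in \eqref{eq:non_linear_skorokhod} together with $X_t \ge 0$ and $\alpha>1$. Both of your computations are correct as far as they go. The genuine gap is your appeal to a ``standing integrability assumption'' on the initial law: the paper makes no such assumption. The only standing hypothesis is $\xi \ge 0$ (integrability is imposed separately where needed, e.g.\@ in Theorem \ref{thm:mfstationary}), and the paper explicitly advertises Lemma \ref{lem:blow_up} as giving breakdown \emph{regardless} of the initial condition, including heavy-tailed initial data; note also that finiteness of $\ell_t = \ev L_t$ is automatic from $\xi \ge 0$ and so carries no moment information about $\xi$. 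If $\ev\xi = \infty$, both of your displays degenerate: the identity $\ev X_t = \ev\xi - (\alpha-1)\ell_t$ reads $\infty = \infty$ and yields no bound on $\ell_t$, and the lower bound $\ell_t \ge \sqrt{2t/\pi} - \ev\xi$ is vacuous.

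The missing idea is truncation, which is exactly how the paper exploits $\alpha>1$ in a way robust to infinite mean: pick $K$ with $\pr(\xi \le K) > 1/\alpha$ (possible since $1/\alpha < 1$), multiply \eqref{eq:non_linear_skorokhod} by $\bf{1}_{\{\xi \le K\}}$ and take expectations, using $\ev[\bf{1}_{\{\xi \le K\}} L_t] \le \ell_t$, to get $\ev[\bf{1}_{\{\xi \le K\}} X_t] \le \ev[\bf{1}_{\{\xi \le K\}}\xi] + \bigl(1 - \alpha\pr(\xi \le K)\bigr)\ell_t$; since $\alpha\pr(\xi\le K) > 1$ and $\ell_t \ge C\sqrt{t-t_0}$ (your comparison with the drift-free Skorokhod problem plus Lemma \ref{lem:lower_bound}, whose proof itself truncates at an $x_0$ with $\pr(\xi\le x_0)\ge 1/2$ and needs no moments), the right-hand side tends to $-\infty$, contradicting $X_t \ge 0$. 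So your argument is valid, and even quantitative (it gives $T \le \tfrac{\pi}{2}\bigl(\tfrac{\alpha\,\ev\xi}{\alpha-1}\bigr)^2$), but only under the extra hypothesis $\ev\xi<\infty$; the truncation step is what is needed to prove the lemma as stated.
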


\begin{proof}
Assume that $T = \infty$. Let $K > 0$ be large enough such that $\pr(\xi \leq K) > \frac{1}{\alpha}$. Then multiplying Equation \eqref{eq:non_linear_skorokhod} by $\bf{1}_{\{\xi \leq K\}}$ and taking expectations yields
\begin{align*}
    \ev[\bf{1}_{\{\xi \leq K\}}X_t] &= \ev[\bf{1}_{\{\xi \leq K\}}\xi] - \alpha \pr(\xi \leq K) \ell_t + \ev[\bf{1}_{\{\xi \leq K\}}L_t] \\
    &\leq \ev[\bf{1}_{\{\xi \leq K\}}\xi] + \bigl(1 - \alpha \pr(\xi \leq K)\bigr) \ell_t \\
    &\to -\infty
\end{align*}
as $t \to \infty$, since $\alpha \pr(\xi \leq K) > 1$ and $\ell_t \geq C \sqrt{t - t_0}$ for $t \in [t_0, \infty)$ and some $C > 0$, $t_0 \geq 0$ by Lemma \ref{lem:lower_bound}. However, $(X, L)$ solves the Skorokhod problem, whence $X_t \geq 0$, which yields a contradiction.
\end{proof}

\begin{remark} \label{rem:phase_transition}
Note that the proof of Lemma \ref{lem:blow_up} no longer applies in the critical regime $\alpha = 1$. In fact, in the critical regime, the maximal solution of McKean--Vlasov SDE \eqref{eq:non_linear_skorokhod} is global, i.e.\@ $T = \infty$ as soon as $\pr(\xi = 0) < 1$ (cf.\@ Theorem \ref{thm:exist_unique}). This is in contrast to the situation for the corresponding finite system with $q_{ij} = \frac{1}{N}$, since for this choice of weights, $\rho(Q) = 1$ and $v^{\top} A[I] v = \lvert v\rvert^2> 0$ for all non-empty $I \subset \bf{N}$ and non-zero $v \in [0, \infty)^I$, so that the SDE \eqref{eq:ps} breaks down in finite time by Theorem \ref{thm:exist_unique_ps} \ref{it:finite_blow_up_ps}
%\ref{it:nondegenerate_vol}
(b). We foreshadowed this result of diversification for the limiting problem in Remark \ref{rem:resource_sharing}.
\end{remark}

As in the finite case, we establish the existence and uniqueness of solutions to the McKean--Vlasov SDE \eqref{eq:non_linear_skorokhod} and characterise the breakdown time. Before we state this result, let us remark that similarly to SDE \eqref{eq:ps}, we can extend the first component $X$ of the solution to the Skorokhod problem associated with $(\ell, T)$ solving McKean--Vlasov SDE \eqref{eq:non_linear_skorokhod} to $D[0, T[$ by setting $X_T = X_{T-}$ if $T < \infty$. The value $X_{T-}$ will always be finite if $T < \infty$. This can be seen by inspecting the expression for $\lvert X_t\rvert^2$ provided by It\^o's formula.

\begin{theorem} \label{thm:exist_unique}
The McKean--Vlasov SDE \eqref{eq:non_linear_skorokhod} has a unique maximal solution $(\ell, T)$. Moreover, $\ell \in C([0, T[)$ and
\begin{enumerate}[noitemsep, label = (\roman*)]
    \item \label{it:maximality_cond} $\pr(X_t = 0) < 1/\alpha$ for $t \in [0, T)$ and $\pr(X_T = 0) \geq 1/\alpha$ if $T < \infty$;
    \item \label{it:positive_blow_up} $T > 0$ if and only if $\pr(\xi = 0) < 1/\alpha$;
    \item \label{it:finite_blow_up} $T < \infty$ if and only if $\alpha > 1$ or $\alpha = 1$ and $\pr(\xi = 0) = 1$.
\end{enumerate}
\end{theorem}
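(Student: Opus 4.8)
The plan is to build the maximal solution by a fixed-point argument on the interaction term $\ell$, exploiting the monotone structure of the Skorokhod map, and then to read off properties \ref{it:maximality_cond}--\ref{it:finite_blow_up} from the fixed-point equation together with the elementary lower bound of Lemma \ref{lem:lower_bound} and Lemma \ref{lem:blow_up}. Concretely, for a fixed nonnegative nondecreasing continuous $\ell$ on $[0,T)$, one-dimensional Skorokhod gives $L_t[\ell] = \ev\bigl[\sup_{0 \le s \le t}(\xi + W_s - \alpha \ell_s)_-\bigr]$, and a solution is exactly a fixed point of the map $\Gamma\colon \ell \mapsto L[\ell]$. The key monotonicity is that $\Gamma$ is order-reversing: larger $\ell$ means a larger downward drift, hence a larger reflection term. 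I would therefore iterate from $\ell^{(0)} \equiv 0$: the sequence $\ell^{(1)} = \Gamma(0) \ge 0$, $\ell^{(2)} = \Gamma(\ell^{(1)})$, \dots is not monotone, but the even and odd subsequences are monotone (decreasing and increasing, respectively) and bracket any fixed point, so they converge; a continuity/compactness argument (uniform bounds on $\ell$ via $\ell_t \le \ev[\xi] + \ev|W_t|$ plus equicontinuity) identifies the common limit on a maximal interval as the unique fixed point. Uniqueness on any subinterval follows because two solutions $\ell, \ell'$ with $\ell \le \ell'$ force, via $\Gamma$ order-reversing applied twice, that $\Gamma^2$ is order-preserving and a Grönwall-type estimate on $\ev|X_t - X_t'|$ (using that $x \mapsto x_-$ is $1$-Lipschitz and that the Skorokhod map is $1$-Lipschitz in sup norm) pins them together; then one glues local solutions to get the maximal $(\ell, T)$, and Lemma \ref{prop:sol_cont} upgrades càdlàg to continuous.

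For \ref{it:maximality_cond}, the maximality characterisation should come from a blow-up criterion: I would show that the solution extends past $T$ precisely when $\limsup_{t \uparrow T}\pr(X_t = 0) < 1/\alpha$. The mechanism is the same as in Lemma \ref{lem:blow_up}: multiply \eqref{eq:non_linear_skorokhod} by $\ind_{\{\xi \le K\}}$, take expectations, and observe that the coefficient of $\ell_t$ is $1 - \alpha\pr(\xi \le K)$; more robustly, one localises in time and conditions on $X_{t_0}$ to see that if $\pr(X_{t_0} = 0) \ge 1/\alpha$ then, choosing the set where $X$ is small at time $t_0$, the same contradiction with $X_t \ge 0$ arises on an arbitrarily short interval after $t_0$ using the $\sqrt{t - t_0}$ growth of the local time from Lemma \ref{lem:lower_bound}. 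Conversely, if $\pr(X_T = 0) < 1/\alpha$ one can restart the fixed-point construction at time $T$ with initial law $\mathcal{L}(X_T)$ and extend, contradicting maximality; hence $\pr(X_T = 0) \ge 1/\alpha$ whenever $T < \infty$, and $\pr(X_t = 0) < 1/\alpha$ on $[0,T)$. Part \ref{it:positive_blow_up} is the $t = 0$ instance of this dichotomy: if $\pr(\xi = 0) \ge 1/\alpha$ the obstruction is already present at time $0$ (so $T = 0$), while if $\pr(\xi = 0) < 1/\alpha$ a short-time contraction estimate — the drift $-\alpha\ell_t$ is $O(\sqrt t)$, so for small $t$ the mass near zero cannot reach $1/\alpha$ — yields $T > 0$.

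Finally, \ref{it:finite_blow_up}: the case $\alpha > 1$ is exactly Lemma \ref{lem:blow_up}. For $\alpha = 1$ with $\pr(\xi = 0) = 1$, part \ref{it:positive_blow_up} already gives $T = 0 < \infty$. For $\alpha = 1$ with $\pr(\xi = 0) < 1$ (and, trivially, for $\alpha < 1$, where the classical Harrison--Reiman theory gives global existence), I must show $T = \infty$, i.e.\ rule out a finite-time build-up of mass at the origin. Here the coefficient $1 - \alpha\pr(\xi \le K)$ is no longer bounded away from $0$ uniformly, so the crude argument fails and one needs a genuine a priori bound showing $\pr(X_t = 0)$ stays below $1/\alpha = 1$ for all finite $t$. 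I expect this to be the main obstacle. The natural route is an energy/entropy estimate at the level of the Fokker--Planck equation, or a comparison: since $X_t = Z_t + L_t[\ell]$ with $Z_t = \xi + W_t - \ell_t$ and $\ell$ is continuous and nondecreasing, one controls $\pr(X_t = 0) = \pr\bigl(\sup_{0\le s\le t}(\ell_s - \xi - W_s) \ge 0 \text{ achieved at } s=t\bigr)$-type events and uses that the push $\ell$ is determined self-consistently — a supersolution argument constructing an explicit $\bar\ell$ with $\Gamma(\bar\ell) \le \bar\ell$ on all of $[0,\infty)$ when $\pr(\xi=0)<1$ would close the loop, the point being that any atom the dynamics tries to create at $0$ is immediately dispersed by the nondegenerate Brownian noise as long as the total mass there is strictly less than one. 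Combining the supersolution with the order-reversing property of $\Gamma$ forces the maximal interval to be all of $[0,\infty)$.
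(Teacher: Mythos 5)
Your overall architecture (fixed point for $\ell$, short-time existence under $\pr(\xi=0)<1/\alpha$, a restart argument for maximality, and Lemma \ref{lem:blow_up} for $\alpha>1$) is close to the paper's, but several steps fail as written. First, the monotonicity is backwards: $\Gamma(\ell)_t=\ev\sup_{0\le s\le t}(\xi+W_s-\alpha\ell_s)_-$ is order-\emph{preserving} (a larger $\ell$ makes the argument more negative and hence the reflection larger), so the even/odd bracketing scheme for antitone maps is not available; moreover the claimed a priori bound $\ell_t\le\ev[\xi]+\ev\lvert W_t\rvert$ cannot hold for $\alpha\ge1$, since it would produce a global solution in contradiction with Lemma \ref{lem:blow_up}. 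Second, your uniqueness step does not survive the critical and supercritical regimes: the fixed-point map is Lipschitz in sup norm only with constant $\alpha$, and there is no time integral available to run a Gr\"onwall argument, so ``$1$-Lipschitz plus Gr\"onwall'' cannot separate two solutions when $\alpha\ge1$. The paper's uniqueness (Proposition \ref{prop:uniqueness}) restarts at the first disagreement time $t_\ast$, where Lemma \ref{lem:alive_cond} gives $\pr(X_{t_\ast}=0)<1/\alpha$, and reruns the local contraction whose constant is $\alpha\,\pr(\text{particle comes within }\epsilon\text{ of }0)\le\alpha/\beta<1$; you invoke this ``mass near zero below $1/\alpha$'' mechanism only for short-time existence, but it is equally indispensable for uniqueness.

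The decisive gap is part \ref{it:finite_blow_up} in the critical case: $\alpha=1$ and $\pr(\xi=0)<1$ must imply $T=\infty$, which you flag as the main obstacle but do not prove, and your proposed routes are unlikely to close. A global supersolution $\bar\ell$ with $\Gamma(\bar\ell)\le\bar\ell$ is hard to exhibit: at criticality solutions grow linearly ($\ell_t=\lambda t/2$ for exponential data), affine candidates $a+bt$ fail because $\ev\sup_{0\le s\le t}(bs-W_s)\ge bt+c\sqrt{t}$ eventually exceeds $a+bt$, and a general $\xi$ with an atom at $0$ of mass less than one stochastically dominates no exponential law, so no comparison datum is at hand. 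The paper's argument is short and of a different nature: if $T<\infty$, then by part \ref{it:maximality_cond} one has $\pr(X_T=0)\ge1$, i.e.\ $X_T=0$ a.s.; on the other hand, taking expectations in \eqref{eq:non_linear_skorokhod} conditionally on $\{\xi\le C\}$ and using that $\ev[L_t\mid\xi=x]$ is nonincreasing in $x$ (so $\ev[L_t\mid\xi\le C]\ge\ell_t$) yields $\ev[X_t\mid\xi\le C]\ge\ev[\xi\mid\xi\le C]$ for all $t<T$; letting $t\to T$ and then $C\to\infty$ forces $\ev[\xi]=0$, contradicting $\pr(\xi=0)<1$. Without an argument of this kind (or a genuinely worked-out PDE estimate), part \ref{it:finite_blow_up} is not established.
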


We will refer to the unique maximal solution of McKean--Vlasov SDE \eqref{eq:non_linear_skorokhod} simply as the unique solution. Moreover, in the case that $\alpha < 1$, or $\alpha = 1$ and $\pr(\xi = 0) < 1$, so that the solution is global, we will suppress $T = \infty$ when referring to the solution and write $\ell$ instead of $(\ell, T)$. Similarly to the particle system, the solution of McKean--Vlasov SDE \eqref{eq:non_linear_skorokhod} is monotonic in its initial condition and the feedback parameter $\alpha$. We prove this in the comparison result, Proposition \ref{prop:comparison_mfl}, from Section \ref{sec:skorokhod_problem}.

In Figure \ref{fig:local_times} below, we plot numerical approximations of the trajectory of $\ell_t$ for various choices of interaction strengths $\alpha \in [0, 2]$. The code used to produce this plot and the two plots in Figure \ref{fig:stationary} below can be found on GitHub\footnote{\url{https://github.com/philkant/singular-local-time}}.

\begin{remark}
There is an elegant parallel between the statement of Theorem \ref{thm:exist_unique} for the mean-field limit and that of Theorem \ref{thm:exist_unique_ps} when applied to the symmetric particle system \eqref{eq:ps_symmetric}, where $q_{ij} = \frac{\alpha}{N}$ and $A = \bb{I}$. Indeed, the latter reads
\begin{enumerate}[noitemsep, label = (\roman*)]
    \item \label{it:maximality_cond_ps_sym} $\mu^N_t(\{0\}) < 1/\alpha$ for $t \in [0, \tau_N)$ and $\mu^N_{\tau_N}(\{0\}) \geq 1/\alpha$ if $\tau_N < \infty$;
    \item \label{it:positive_blow_up_ps_sym} $\tau_N > 0$ if and only if $\mu^N_0(\{0\}) < 1/\alpha$;
    \item \label{it:finite_blow_up_ps_sym} $\tau_N < \infty$ if and only if $\alpha \geq 1$,
\end{enumerate}
where $\mu^N_t = \frac{1}{N} \sum_{i = 1}^N \delta_{X^i_t}$ is the empirical law of the particle system at time $t \in [0, \tau_N]$. Note that Theorem \ref{thm:exist_unique_ps} \ref{it:finite_blow_up_ps_sym} reduces to the above, since Item (b) in Theorem \ref{thm:exist_unique_ps} \ref{it:finite_blow_up_ps_sym} simplifies to the condition $\alpha = \rho(Q) = 1$. This together with (a) includes Item (c) as a special case and leads to the condition $\alpha \geq 1$ in \ref{it:finite_blow_up_ps_sym} above.

The only difference between Theorem \ref{thm:exist_unique} and the corresponding statement for the particle system lies in the condition for the presence of a breakdown in finite time, cf.\@ Remarks \ref{rem:resource_sharing} and \ref{rem:phase_transition}.
\end{remark}

Theorem \ref{thm:exist_unique} shows that in the supercritical case an atom at the origin with mass $\geq 1/\alpha$ dynamically forms in the law $\L(X_t)$ of $X_t$ at the breakdown time $T$. What is left open, is whether atoms may also appear before $T$. This is ruled out by appealing to results of Burdzy, Chen \& Sylvester \cite{burdzy_rbm_2003} on the occurrence of atoms for reflected Brownian motion in time-dependent domains.

\begin{proposition} \label{prop:no_atom}
Let $(\ell, T)$ be the unique solution of McKean--Vlasov SDE \eqref{eq:non_linear_skorokhod}. Then $\pr(X_t = 0) = 0$ for all $t \in (0, T)$. 
\end{proposition}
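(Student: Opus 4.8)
The plan is to reformulate the statement as the absence of boundary atoms for a reflecting Brownian motion in a time-dependent domain, and then invoke the results of Burdzy, Chen \& Sylvester \cite{burdzy_rbm_2003}. Let $(X, L)$ be the solution to the Skorokhod problem associated with the unique solution $(\ell, T)$. By Theorem \ref{thm:exist_unique}, $\ell \in C([0, T[)$, and since $\ell_t = \ev L_t$ is the expectation of the nondecreasing process $L$, the map $t \mapsto \ell_t$ is nondecreasing with $\ell_0 = 0$. Put $g(t) \define \alpha \ell_t$ and $Y_t \define X_t + g(t) = \xi + W_t + L_t$. Then $Y_t \geq g(t)$ and $\dd L$ is carried by $\{t : X_t = 0\} = \{t : Y_t = g(t)\}$, so $Y$ is the Skorokhod reflection of the Brownian path $t \mapsto \xi + W_t$ above the continuous, nondecreasing moving boundary $g$; equivalently, $Y$ is a reflecting Brownian motion in the time-dependent domain $\mathscr{D} = \{(t, x) : 0 \leq t < T,\ x > g(t)\}$, started from $\xi \geq g(0) = 0$. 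Since $\pr(X_t = 0) = \pr(Y_t = g(t))$, Proposition \ref{prop:no_atom} is the assertion that $\L(Y_t)$ puts no mass at the boundary point $g(t)$ for every $t \in (0, T)$; at $t = 0$ this may fail, as $\pr(X_0 = 0) = \pr(\xi = 0)$ can be positive, which is why the origin is excluded.

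To apply the criterion of \cite{burdzy_rbm_2003} ruling out boundary atoms at positive times, one must check its regularity hypotheses on $\mathscr{D}$. In one dimension the spatial (Lipschitz) condition on $\partial \mathscr{D}$ is vacuous, so only the temporal regularity of $g$, equivalently of $\ell$, matters, on compact subintervals of $(0, T)$; continuity and monotonicity of $\ell$ already follow from Theorem \ref{thm:exist_unique}. If a quantitative modulus is needed, start from the Skorokhod-map increment identity $L_t - L_s = \bigl(\sup_{s \leq r \leq t}(\alpha \ell_r - \xi - W_r) - L_s\bigr)^+$, which with $L_s \geq \alpha \ell_s - \xi - W_s$ and the monotonicity of $\ell$ yields the pathwise bound $L_t - L_s \leq \alpha(\ell_t - \ell_s) + \sup_{s \leq r \leq t}(W_s - W_r)^+$. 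Taking expectations and using the reflection principle gives $(1 - \alpha)(\ell_t - \ell_s) \leq \sqrt{2(t - s)/\pi}$, so in the subcritical case $\alpha < 1$ the function $\ell$ is $\tfrac{1}{2}$-H\"older on every $[0, t_1] \subset [0, T)$. In the critical and supercritical cases $\alpha \geq 1$ this fixed-point estimate degenerates, and one instead argues locally around an interior time $t_0 \in (0, T)$: for $s, t$ near $t_0$, $\ell_t - \ell_s$ is the expected boundary local time accumulated by $X$ over $[s, t]$, which can be controlled through heat-kernel / Schauder-type bounds on the density of $X_r$ near $t_0$, using the strict inequality $\pr(X_r = 0) < 1/\alpha$ from Theorem \ref{thm:exist_unique}\ref{it:maximality_cond} to get the modulus required by \cite{burdzy_rbm_2003}.

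With the boundary regularity in hand, the result of \cite{burdzy_rbm_2003} on the (non-)occurrence of atoms for reflecting Brownian motion in time-dependent domains applies to $Y$ and gives $\L(Y_t)(\{g(t)\}) = 0$, i.e.\@ $\pr(X_t = 0) = 0$, for every $t \in (0, T)$, which is the claim. The crux is the second step: obtaining, uniformly on compact subintervals of $(0, T)$ and for the full range of $\alpha$ — in particular the supercritical regime $\alpha > 1$, where the elementary estimate above breaks down — a modulus of continuity for $\ell$ matching the hypotheses of the Burdzy--Chen--Sylvester non-atom criterion. The natural route passes through density/heat-kernel bounds for $X_r$ at interior times, and some care is needed to transfer these to a bound on $\ell$ without circularity with the no-atom conclusion one is proving, for instance by first establishing a weaker modulus that already suffices for \cite{burdzy_rbm_2003}.
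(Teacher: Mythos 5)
Your overall strategy coincides with the paper's: reduce the statement to the Burdzy--Chen--Sylvester result for reflecting Brownian motion in a time-dependent domain and then rule out an atom by producing a square-root modulus for $\ell$. In fact the paper uses the sharp form of that result (\cite[Theorem 2.2]{burdzy_rbm_2003}): an atom at time $t$ occurs if and only if $s \mapsto \ell_t - \ell_{t-s}$ is an upper function for Brownian motion, so that, since $\limsup_{s\to 0} W_s/\sqrt{s} = \infty$ a.s., a bound $\ell_t - \ell_{t-s} \leq c\sqrt{s}$ is all that is needed --- there are no further domain-regularity hypotheses to verify, contrary to what your second paragraph suggests. Your subcritical estimate $(1-\alpha)(\ell_t-\ell_s) \leq \sqrt{2(t-s)/\pi}$ is correct and matches an estimate the paper uses elsewhere.

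The genuine gap is the critical and supercritical regimes, which you yourself flag as ``the crux'' but do not prove: the appeal to ``heat-kernel / Schauder-type bounds on the density of $X_r$'' is not carried out, and it is not clear how such bounds would be obtained independently of the conclusion you are proving. The paper closes exactly this gap with Proposition \ref{prop:local_hoelder}, whose argument is elementary and valid for every $\alpha$ and which involves no density estimates: fix $T' < T$; using Lemma \ref{lem:alive_cond} ($\pr(X_t = 0) < 1/\alpha$ for $t < T$) together with the Portmanteau theorem and upper semicontinuity of $t \mapsto \pr(X_t \leq 2/n)$, one finds $\epsilon > 0$ and $\gamma > \alpha$ with $\sup_{t \in [0,T']}\pr(X_t \leq 2\epsilon) < 1/\gamma$; then, writing $Z_s^t = \inf_{s \leq u \leq t}(W_u - W_s)$ and choosing $t - s \leq \delta$ so that $\pr(Z_s^t < -\epsilon)$ is small, one gets $\pr(X_s + Z_s^t \leq \epsilon) \leq 1/\beta$ for some $\beta > \alpha$, whence
\begin{equation*}
\ell_t - \ell_s \;\leq\; -\ev[Z_s^t] + \alpha\,\pr\bigl(X_s + Z_s^t \leq \epsilon\bigr)\,(\ell_t - \ell_s) \;\leq\; c\sqrt{t-s} + \tfrac{\alpha}{\beta}\,(\ell_t - \ell_s),
\end{equation*}
and rearranging gives the local $1/2$-H\"older bound on $[0,T']$. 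Note that this uses only the already-established inequality $\pr(X_t = 0) < 1/\alpha$, so the circularity you worried about does not arise; but without this (or an equivalent) argument your proposal does not prove the proposition for $\alpha \geq 1$, which is precisely the case of interest.
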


\begin{proof}
By Theorem 2.2 of \cite{burdzy_rbm_2003}, it holds that $\pr(X_t = 0) > 0$ for $t \in (0, T)$ if and only if $[0, \delta] \ni s \mapsto \ell_t - \ell_{t - s}$ is an upper function for Brownian motion for some $\delta \in [0, t]$. Here a measurable function $f \define [0, \delta] \to [0, \infty)$ is an upper function of Brownian motion if a.s.\@ there exists $\epsilon \in (0, \delta)$ such that for all $s \in [0, \epsilon]$, we have $W_s < f_s$. Since $\limsup_{s \to 0}(W_s/\sqrt{s}) = \infty$ almost surely, if $f_s \leq c\sqrt{s}$ for some $c > 0$ and all $s \in [0, \delta]$, then $f$ will not be an upper function of Brownian motion. Thus, to prove $\pr(X_t = 0) = 0$, it is enough to show $\ell_t - \ell_{t - s} \leq c\sqrt{s}$ for some $c > 0$. In fact, by Proposition \ref{prop:local_hoelder} of Section \ref{sec:skorokhod_problem}, $\ell$ is locally $1/2$-H\"older continuous on $[0, T)$, so the proof is complete.
\end{proof}

Note that since an atom does form at the origin at the breakdown time $T$, we can conclude that $[0, \delta] \ni s \mapsto \ell_T - \ell_{T - s}$ is an upper function for Brownian motion for some $\delta \in [0, T]$. In particular, for any $c > 0$, there exists $s \in (0, T]$ such that $\ell_T - \ell_{T - s} \geq c \sqrt{s}$.

Having established existence and uniqueness for the mean-field limit, we can now discuss the convergence of the particle system. We are able to prove propagation of chaos in all three regimes. For the purpose of the following theorem, we consider $\bar{L}^N_{\cdot \land \tau_N}$ as an element of $C([0, T))$. Moreover, in order to deal with the convergence of the possibly infinite times $\tau_N$ and $T$, we introduce a suitable metric structure on the space $[0, \infty]$. We define the metric $(t, s) \mapsto \lvert t/(1 + t) - s/(1 + s)\rvert$ on $[0, \infty]$, where $\infty/(1 + \infty) = 1$ by convention. This turns $[0, \infty]$ into a complete separable metric space and a sequence $(t_n)_n$ in $[0, \infty]$ converges to $t \in [0, \infty]$ with respect to the above metric if $t_n \to t$ as $n \to \infty$ in the usual sense.

\begin{figure}[t] 
    \centering
    \includegraphics[width=0.5\columnwidth]{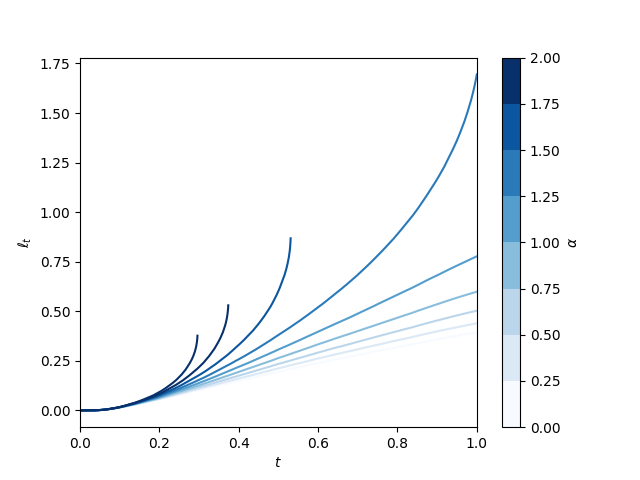}

    \caption{The plot shows trajectories of a particle approximation ($N = 10^5$) of the unique maximal solution $(\ell, T)$ of McKean--Vlasov SDE \eqref{eq:non_linear_skorokhod} for different interactions strength $\alpha = n/4$ with $n \in \{0, \dots, 8\}$. If $n \in \{5, \dots, 8\}$, so $\alpha > 1$, the system breaks down before time $1$.}
    \label{fig:local_times}
 \end{figure}

\begin{theorem} \label{thm:poc}
The sequence $(\bar{L}^N_{\cdot \land \tau_N}, \tau_N)_{N \geq 1}$ converges weakly to the unique solution $(\ell, T)$ of McKean--Vlasov SDE \eqref{eq:non_linear_skorokhod} on $C([0, T)) \times [0, \infty]$. Moreover, if $\alpha \in [0, 1)$, then the sequence $\sqrt{N} \sup_{0 \leq s \leq t}\lvert \bar{L}^N_s - \ell_s\rvert$, $N \geq 1$, is stochastically bounded for all $t \geq 0$.
\end{theorem}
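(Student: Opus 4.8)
The plan is to treat the subcritical regime separately (where a direct quantitative coupling works) and then handle the critical and supercritical regimes by a tightness/identification argument anchored by the uniqueness from Theorem \ref{thm:exist_unique}.

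\medskip
\noindent\textbf{Subcritical regime $\alpha\in[0,1)$.} Here $\rho(Q_t)=\alpha\#I_t/N\le\alpha<1$, so by Theorem \ref{thm:exist_unique_ps} we have $\tau_N=\infty$ a.s., and since $q_{ij}=\alpha/N$ the feedback is exactly $\sum_j q_{ij}L^j=\alpha\bar L^N$. Thus $L^i_t=\sup_{u\le t}(\xi_i+W^i_u-\alpha\bar L^N_u)_-$, i.e.\ $\bar L^N$ is a fixed point of $m\mapsto\frac1N\sum_i\sup_{u\le\cdot}(\xi_i+W^i_u-\alpha m_u)_-$, and $\ell$ is the fixed point of $m\mapsto\E\sup_{u\le\cdot}(\xi+W_u-\alpha m_u)_-$. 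Introduce the decoupled particles $\hat L^i_t:=\sup_{u\le t}(\xi_i+W^i_u-\alpha\ell_u)_-$, which are i.i.d.\ copies of the limiting local time with $\E\hat L^i_t=\ell_t$. Using that the one–dimensional Skorokhod map $Z\mapsto\sup_{s\le\cdot}(Z_s)_-$ is $1$-Lipschitz for the sup-norm, $|L^i_t-\hat L^i_t|\le\alpha\sup_{u\le t}|\bar L^N_u-\ell_u|$, whence, writing $\tilde\ell^N:=\frac1N\sum_i\hat L^i$,
\begin{equation*}
    (1-\alpha)\sup_{s\le t}|\bar L^N_s-\ell_s|\ \le\ \sup_{s\le t}\Bigl|\tfrac1N\textstyle\sum_i\hat L^i_s-\ell_s\Bigr|.
\end{equation*}
Since $\hat L^1$ is nondecreasing and $\|\hat L^1\|_{[0,t]}\le\sup_{u\le t}|W^1_u|+\alpha\ell_t\in L^2$, a functional CLT (the monotone maps $s\mapsto\hat L^1_s$ form a Donsker class) gives that $\sqrt N\sup_{s\le t}|\tilde\ell^N_s-\ell_s|$ is stochastically bounded; combined with the inequality above this proves the second assertion, and in particular $\bar L^N\to\ell$ uniformly on compacts in probability, which (together with $\tau_N\equiv\infty=T$) yields the weak convergence in this regime.

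\medskip
\noindent\textbf{Critical and supercritical regimes: tightness and identification.} Fix any $t$; since $[0,\infty]$ is compact the family $(\tau_N)_N$ is automatically tight, and on $[0,\tau_N)$ one still has $\rho(Q_s)<1$, so the same closed fixed-point identity $\bar L^N_s=\frac1N\sum_i\sup_{u\le s}(\xi_i+W^i_u-\alpha\bar L^N_u)_-$ holds there. A priori control on $\bar L^N$ up to $\tau_N$ is available: when $\alpha>1$, summing \eqref{eq:non_linear_skorokhod} over $i$ and using $\frac1N\sum_iX^i\ge0$ gives $\bar L^N_{s}\le(\frac1N\sum_i\xi_i+\sup_{r\le s}\bar W^N_r)/(\alpha-1)$, and an analogous estimate (via the mean identity) controls the critical case; this yields tightness of $\bar L^N_{\cdot\land\tau_N}$ in the appropriate path space. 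Passing to a subsequence and invoking Skorokhod's representation, assume $(\bar L^N_{\cdot\land\tau_N},\tau_N)\to(\ell^\ast,T^\ast)$ a.s.\ (uniformly on compacts of $[0,T^\ast)$). Taking $N\to\infty$ in the fixed-point identity, the empirical average $\frac1N\sum_i\sup_{u\le s}(\xi_i+W^i_u-\alpha\bar L^N_u)_-$ converges, by the law of large numbers and continuity of the Skorokhod map, to $\E\sup_{u\le s}(\xi+W_u-\alpha\ell^\ast_u)_-$; hence $(\ell^\ast,T^\ast)$ is a solution of McKean--Vlasov SDE \eqref{eq:non_linear_skorokhod}. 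By the uniqueness part of Theorem \ref{thm:exist_unique}, $\ell^\ast=\ell$ on $[0,T^\ast)$ and $T^\ast\le T$; since the limit is then identified independently of the subsequence, it remains only to establish the matching lower bound $T^\ast\ge T$, equivalently $\liminf_N\tau_N\ge T$ in probability.

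\medskip
\noindent\textbf{The main obstacle: the finite system survives as long as the limit.} Fix $t<T$. By Proposition \ref{prop:no_atom} the limiting law has no atom at $0$ on $(0,T)$, so $\E[\mu^N_s(\{0\})]$ should be forced down to $\pr(X_s=0)=0$ as soon as $\bar L^N$ is close to $\ell$; the point is to make this quantitative in the \emph{non-contractive} regime $\alpha\ge1$. The natural device is a bootstrap at a sub-critical threshold: let $\sigma_N=\inf\{s:\mu^N_s(\{0\})\ge\frac1{2\alpha}\}$, so that $\tau_N\ge\sigma_N$ and on $[0,\sigma_N)$ one has $\rho(Q_s)\le\frac12$. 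One then wants $\pr(\sigma_N\le t)\to0$, which would follow from showing $\bar L^N\to\ell$ uniformly on $[0,\sigma_N\land t]$ and comparing $\mu^N_s$ with the decoupled empirical measure $\frac1N\sum_i\delta_{\hat X^i_s}$, whose mass at $0$ tends to $\pr(X_s=0)=0<\frac1{2\alpha}$. Closing this loop is the crux: in the absence of a contraction, the feedback is one-sided positive, and one must show that the amplification of the $O(N^{-1/2})$ fluctuation over the window $[0,\sigma_N\land t]$ remains controlled — I expect this to require either a one-sided (super-solution) estimate exploiting the monotonicity of the fixed-point map together with the sub-critical bound $\rho(Q_s)\le\frac12$, or a Gr\"onwall argument whose rate is governed by the boundary regularity of $\ell$ inherited from Proposition \ref{prop:local_hoelder} and the results of \cite{burdzy_rbm_2003}. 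Once $\liminf_N\tau_N\ge T$ is secured, $T^\ast=T$ and $\ell^\ast=\ell$, so the full sequence $(\bar L^N_{\cdot\land\tau_N},\tau_N)$ converges weakly to $(\ell,T)$ on $C([0,T))\times[0,\infty]$, completing the proof.
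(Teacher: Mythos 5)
Your subcritical argument is sound and essentially the paper's own proof (decoupled i.i.d.\ copies, $1$-Lipschitz Skorokhod map, contraction factor $\frac{1}{1-\alpha}$, then a functional CLT for the empirical average of the decoupled local times; the paper obtains tightness via a fourth-moment Kolmogorov estimate using the $1/2$-H\"older continuity of $\ell$ rather than a Donsker-class citation, but this is the same route). The identification step for subsequential limits via continuity of the fixed-point map and the law of large numbers for $\frac{1}{N}\sum_i \delta_{(\xi_i, W^i)}$ also matches the paper.

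However, there are two genuine gaps in the critical/supercritical part. First, your tightness claim for $\alpha = 1$ does not work as stated: summing the SDE when $\alpha = 1$ gives $\bar{X}^N_t = \frac{1}{N}\sum_i \xi_i + \frac{1}{N}\sum_i W^i_t$ with the $\bar{L}^N$ term cancelling, so the ``mean identity'' yields no control on $\bar{L}^N$ whatsoever. The paper instead applies It\^o's formula to $1 - e^{-X^i_t}$, which produces the bound $\ev \bar{L}^N_{t \land \varrho_N} \leq \frac{1}{\delta} + \frac{1}{2}$ up to a stopping time $\varrho_N$ and only on a small initial window, and then bootstraps this (using the already-identified limit) to extend tightness to all times; none of this is in your proposal. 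Second, and more importantly, the step you yourself flag as the crux --- showing $\liminf_N \tau_N \geq T$ in the supercritical regime --- is left open (``I expect this to require either \ldots''). The paper closes it with Proposition \ref{prop:asymptotic_regularirty}: a one-sided increment estimate of the form $(1 - \alpha p^N_t)(\bar{L}^N_{T_0 + t} - \bar{L}^N_{T_0}) \leq (\alpha(\bar{L}^N_{T_0+t} - \bar{L}^N_{T_0}) - \eta)_+ + \bar{Z}^N_t$, obtained by splitting particles according to whether $X^i_{T_0} - Z^i_t < \eta$, followed by a stopping-time bootstrap showing the increment stays below $\eta/\alpha$ while $\bar{Z}^N$ is small; this only requires the empirical mass in $[0, 2\eta]$ to be below $\frac{1}{2\alpha}$, which holds at times strictly before $T$ by Theorem \ref{thm:exist_unique} \ref{it:maximality_cond} (note the paper works with mass near the origin, not with the no-atom statement of Proposition \ref{prop:no_atom}). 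Combined with Lemma \ref{lem:lower_bound_blow_up} and a maximality argument for $\tilde{T}$, this gives $\tau_N \Rightarrow T$. Without an argument of this type your proof of the weak convergence in the regimes $\alpha \geq 1$ is incomplete.
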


The last statement of the theorem means that for all $\epsilon > 0$, there exists $z > 0$ such that $\limsup_{N \to \infty} \pr\bigl(\sup_{0 \leq s \leq t}\lvert \bar{L}^N_s - \ell_s\rvert \geq z/\sqrt{N}\bigr) < \epsilon$. Consequently, $\sup_{0 \leq s \leq t}\lvert \bar{L}^N_s - \ell_s\rvert$ converges to zero in probability with rate $\frac{1}{2}-$ as $N\to\infty$. While Theorem \ref{thm:poc} follows from a simple coupling argument in the subcritical case, the critical and supercritical case cannot be addressed with standard arguments. A careful analysis of $\tau_N$ is necessary to rule out the possibility that $\tau_N \Rightarrow 0$.

Next, let us discuss the long-term behaviour of McKean--Vlasov SDE \eqref{eq:non_linear_skorokhod} in the (sub)critical regime. The equation admits explicit stationary and self-similar solutions when $\alpha=1$ and $\alpha<1$, respectively. As we shall see below, the stationary solutions are not unique, but depend on the first moment of the initial condition. Such non-uniqueness has been observed for other mean-field systems such as the Kuramoto model \cite{carmona_synchronization_2023}, integrate-and-fire neurons \cite{cormier_long_2020}, and Brownian motion interacting with its law in a double-well potential \cite{herrmann_non-uniqueness_2010}.

\begin{definition}
Let $\alpha \leq 1$ and $\mu \in \P([0, \infty))$ such that $\mu(\{0\}) < 1/\alpha$. Let $(X, L)$ be the solution to the Skorokhod problem associated with the unique solution $\ell$ of McKean--Vlasov SDE \eqref{eq:non_linear_skorokhod}.
\begin{enumerate}[noitemsep, label = (\roman*)]
    \item If $\alpha = 1$, we call $\mu$ a \textit{stationary distribution} for McKean--Vlasov SDE \eqref{eq:non_linear_skorokhod} if $\L(X_t) = \mu$ for all $t \geq 0$ when $\xi \sim \mu$.
    \item If $\alpha < 1$, we call $\mu$ a \textit{self-similar profile} for McKean--Vlasov SDE \eqref{eq:non_linear_skorokhod} if there exists a scaling exponent $\delta > 0$ such that $\L(X_t) = (t^{\delta} \id_{\R})^{\#} \mu$ for all $t > 0$ when $\xi = 0$.
\end{enumerate}
Here $\id_{\R}$ is the identity on $\R$ and $(\cdot)^{\#}$ denotes the pushforward induced by a function.
\end{definition}

Note that by uniqueness of McKean--Vlasov SDE \eqref{eq:non_linear_skorokhod}, if a self-similar profile exists, it is unique. If the self-similar profile $\mu$ has a density $f \define \R \to [0, \infty)$, then the condition $\L(X_t) = (t^{\delta} \id_{\R})^{\#} \mu$ simply means that the law of $X_t$ has the density $x \mapsto t^{-\delta} f(t^{-\delta}x)$. In general, one may want to allow different scalings for the function $f$ and its argument $x$, i.e.\@ consider $t^{-\gamma} f(t^{-\delta}x)$ for $\gamma$, $\delta > 0$. However, mass is only preserved if $\gamma = \delta$.

\begin{theorem} \label{thm:mfstationary}
Suppose that $\ell$ is a solution to McKean--Vlasov SDE \eqref{eq:non_linear_skorokhod} for $\alpha \leq 1$ with integrable initial condition $\xi$ such that $\pr(\xi = 0) < 1/\alpha$ and let $(X,L)$ be the solution to the Skorokhod problem associated to $\ell$. 
\begin{enumerate}[noitemsep, label = (\roman*)]
    \item \label{it:stationary} If $\alpha = 1$, then the set of stationary distributions for McKean--Vlasov SDE \eqref{eq:non_linear_skorokhod} is given by $(\textup{Exp}(\lambda))_{\lambda > 0}$, where $\textup{Exp}(\lambda)$ denotes the exponential distribution with rate $\lambda$. The solution to McKean--Vlasov SDE \eqref{eq:non_linear_skorokhod} with stationary distribution $\textup{Exp}(\lambda)$ is given by $(\lambda t/2)_{t \geq 0}$. Moreover, if $\ev \xi^2 < \infty$, then
    \begin{equation}
        \lim_{t \to \infty} \cal{W}_1\bigl(\L(X_t), \textup{Exp}(\lambda)\bigr) = 0,
    \end{equation}
    where $\lambda = \ev[\xi]^{-1}$.
    \item \label{it:self_similar} If $\alpha < 1$, then McKean--Vlasov SDE \eqref{eq:non_linear_skorokhod} has a self-similar profile $\rho_{\alpha}$ with scaling exponent $\delta = \frac{1}{2}$ which has the density
    \begin{equation} \label{eq:self_similar_profile}
        c_{\alpha}\exp\biggl(-c_{\alpha}\alpha  x - \frac{x^2}{2}\biggr)\bf{1}_{x\ge0},
    \end{equation}
    where $c_{\alpha} > 0$ is a normalisation constant. The corresponding solution to McKean--Vlasov SDE \eqref{eq:non_linear_skorokhod} is given by $(c_{\alpha}\sqrt{t})_{t \geq 0}$. Moreover, it holds that
    \begin{equation} \label{eq:convergence_self_similar}
        \lim_{t \to \infty} \cal{W}_1\Bigl(\L(X_t), (\sqrt{t}\id_{\R})^{\#}\rho_{\alpha}\Bigr) = 0 \quad \text{and} \quad \lim_{t \to \infty} \Bigl\lvert \bigl((1 - \alpha)^{-1} \ev\xi + \ell_t\bigr) - c_{\alpha} \sqrt{t}\Bigr\rvert = 0.
    \end{equation}
\end{enumerate}
Here $\cal{W}_1$ denotes the $1$-Wasserstein distance on the set of probability measures on $\R$ with finite first moment.
\end{theorem}

The offset $(1 - \alpha)^{-1} \ev\xi$ appearing in the convergence result for $\ell$ in \eqref{eq:convergence_self_similar} accounts for the fact that the self-similar solution is started from the origin, while the initial condition $\xi$ in \eqref{eq:convergence_self_similar} can be any nonnegative integrable random variable.

\begin{remark} \label{rem:interpolation}
For $\alpha \in (0, 1)$, a simple calculation, using the normalisation property of $c_{\alpha}$, shows that the mean of the self-similar profile $\rho_{\alpha}$ from \eqref{eq:self_similar_profile} is given by $c_{\alpha} (1 - \alpha)$. Hence, defining $\bar{\rho}_{\alpha}$ as the pushforward of $\rho_{\alpha}$ through the map $\id_{\R}/(c_{\alpha}(1 - \alpha))$, we obtain a distribution with unit mean. Its density is given by
\begin{equation} \label{eq:norm_self_similar}
    \gamma_{\alpha}\exp\biggl(-\gamma_{\alpha}\alpha  x - \gamma_{\alpha}(1 - \alpha)\frac{x^2}{2}\biggr)\bf{1}_{x\ge0},
\end{equation}
where $\gamma_{\alpha} = c_{\alpha}^2(1 - \alpha)$. Thus, the mean-normalised self-similar profile interpolates between an exponential distribution and a folded normal distribution. The former arises as a stationary distribution if $\alpha = 1$ and the latter is the self-similar profile of a reflected Brownian motion, corresponding to $\alpha = 0$. Moreover, by Lemma \ref{lem:profile_parameter}, we have that $\lim_{\alpha \searrow 0} \gamma_{\alpha} = 2/\pi$ and $\lim_{\alpha \nearrow 1} \gamma_{\alpha} = 1$. Thus, $\bar{\rho}_{\alpha}$ converges to a folded normal distribution with mean $1$ and variance $\pi/2$ when $\alpha \searrow 0$, precisely recovering the unit mean self-similar profile for $\alpha = 0$, while $\bar{\rho}_{\alpha}$ tends to a standard exponential distribution as $\alpha \nearrow 1$, i.e.\@ the unit mean stationary distribution in the critical regime.
\end{remark}

Plots of the stationary and self-similar distribution (for $\alpha = 1/2$) overlaid with the corresponding empirical densities of particle approximations of McKean--Vlasov SDE \eqref{eq:non_linear_skorokhod} at time $t = 10$ are provided in Figure \ref{fig:stationary} below.

\subsection{Fokker--Planck Equation}

Lastly, we will consider the nonlinear Fokker--Planck equation associated with McKean--Vlasov SDE \eqref{eq:non_linear_skorokhod}. Let $(\ell, T)$ be the unique solution to SDE \eqref{eq:non_linear_skorokhod} and denote the solution to the Skorokhod problem associated to $(\ell, T)$ by $(X, L)$. We set $\mu_t = \L(X_t)$ for $t \in [0, T)$. Next, let $\cal{C}$ denote the space of $\varphi \in C^2(\R)$ such that $\varphi$, $\partial_x \varphi$, and $\partial_x^2 \varphi$ are bounded and $\partial_x \varphi(0) = 0$. This will be the space of test functions. By It\^o's formula, for any $\varphi \in \cal{C}$, we have
\begin{align} \label{eq:ito_for_nsp}
    \d \varphi(X_t) &= \partial_x \varphi(X_t) \, \d W_t - \alpha \partial_x \varphi(X_t) \, \d \ell_t + \partial_x \varphi(X_t) \, \d L_t + \frac{1}{2} \partial_x^2 \varphi(X_t) \, \d t \notag \\
    &= \partial_x \varphi(X_t) \, \d W_t - \alpha \partial_x \varphi(X_t) \, \d \ell_t + \frac{1}{2} \partial_x^2 \varphi(X_t) \, \d t,
\end{align}
where we used in the second equality that
\begin{equation*}
    \partial_x \varphi(X_t) \, \d L_t = \partial_x \varphi(X_t) \bf{1}_{\{X_t \neq 0\}} \, \d L_t = \partial_x \varphi(0) \bf{1}_{\{X_t \neq 0\}} \, \d L_t = 0.
\end{equation*}
Taking expectation on both sides of \eqref{eq:ito_for_nsp} implies that
\begin{equation} \label{eq:pde_nsp}
    \d \langle \mu_t, \varphi\rangle = -\alpha \langle \mu_t, \partial_x \varphi\rangle \, \d \ell_t + \frac{1}{2} \langle \mu_t, \partial_x^2 \varphi\rangle \, \d t
\end{equation}
for all $\varphi \in \cal{C}$. This PDE is not ``autonomous'' because we have not linked the function $\ell$ to the law $\mu_t$. To achieve this, let us suppose that $\mu_t$ has a density for a.e.\@ $t \in [0, T)$ (which we shall denote by the same letter) and that $\mu \in C([0, T_0); L^1(\R_+)) \cap L^1_{\textup{loc}}([0, T_0); W^{1, 1}(\R_+))$ for some $T_0 \in (0, T]$. This in particular implies that for a.e.\@ $t \in [0, T_0)$, $\mu_t$ can be extended to a continuous function on $[0, \infty)$ such that $\lim_{x \to \infty} \mu_t(x) = 0$. Then, using the occupation time formula (see Lemma \ref{lem:loc_time_zero} for details) we can show that $\ell_t = \frac{1}{2}\int_0^t \mu_s(0) \, \d s$, so that $(\mu, T_0)$ satisfies the nonlinear PDE
\begin{equation} \label{eq:pde_strong}
    \d \langle \mu_t, \varphi\rangle = -\frac{\alpha \mu_t(0)}{2} \langle \mu_t, \partial_x \varphi\rangle \, \d t + \frac{1}{2} \langle \mu_t, \partial_x^2 \varphi\rangle \, \d t
\end{equation}
for $\varphi \in \cal{C}$ and $t \in [0, T_0)$. In what follows, when we say that $(\mu, T_0)$ solves PDE \eqref{eq:pde_strong}, we automatically mean that $T_0 > 0$ and $\mu \in C([0, T_0); L^1(\R_+)) \cap L^1_{\textup{loc}}([0, T_0); W^{1, 1}(\R_+))$.

\begin{figure}[t] 
    \makebox[\linewidth][c]{
    \begin{subfigure}[b]{0.5\columnwidth}
        \centering
        \includegraphics[width=\columnwidth]{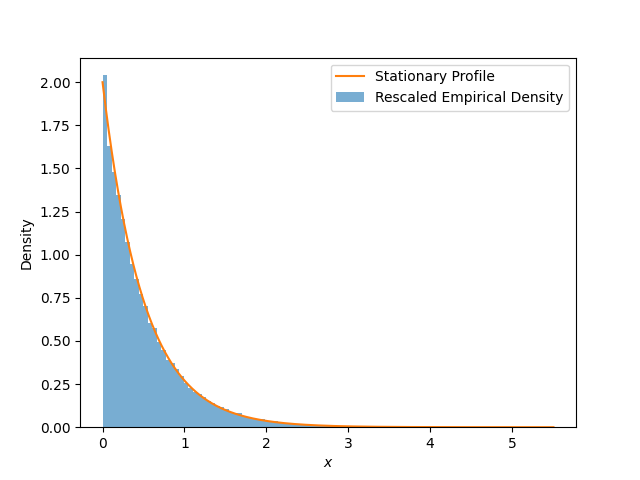}
    \end{subfigure}

    \hspace{-0.5cm}

    \begin{subfigure}[b]{0.5\columnwidth}
        \centering
        \includegraphics[width=\columnwidth]{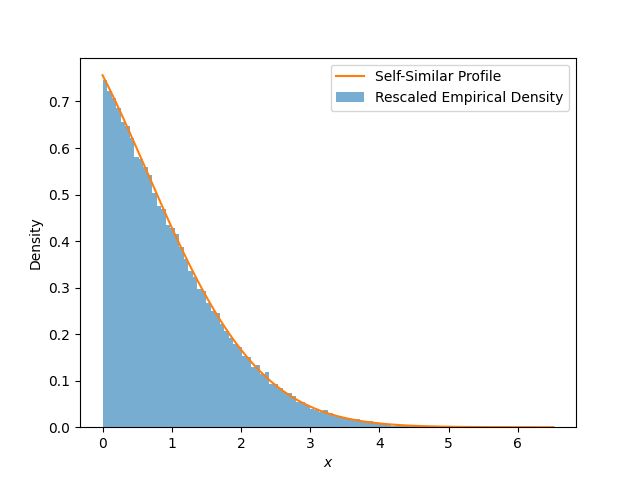}
    \end{subfigure}
    }
    \caption{The plot on the left-hand side shows the true stationary profile for $\alpha = 1$ as well as an approximation of the density of $X_t$ for $t = 10$ for the unique solution of McKean--Vlasov SDE \eqref{eq:non_linear_skorokhod} with $\alpha = 1$ based on the finite system \eqref{eq:ps} with $N = 10^5$ particles. The right panel shows the true self-similar profile for $\alpha = 1/2$ as well as an approximation of the density of $X_t/\ev X_t$ based on the same particle system with $\alpha = 1/2$.}
    \label{fig:stationary}
 \end{figure}

\begin{lemma} \label{lem:loc_time_zero}
Assume that $\mu \in C([0, T_0); L^1(\R_+)) \cap L^1_{\textup{loc}}([0, T_0); W^{1, 1}(\R_+))$. Then $\ell_t = \frac{1}{2}\int_0^t \mu_s(0) \, \d s$ for $t \in [0, T_0)$, so $(\mu, T_0)$ solves PDE \eqref{eq:pde_strong}.
\end{lemma}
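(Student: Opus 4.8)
The plan is to identify the reflection term $L$ with half the semimartingale local time of $X$ at the origin, and then to evaluate its expectation via the occupation time formula, which is the point where the regularity of $\mu$ enters.

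First I would apply the Tanaka--Meyer formula to $X_t = X_t^+$ (valid since $X \ge 0$) at the level $0$, which gives $X_t = \xi + \int_0^t \bf{1}_{\{X_s > 0\}}\,\d X_s + \tfrac12 L^0_t(X)$, where $L^0 = L^{0+}$ is the right local time of $X$. Since $X_t - \xi = W_t - \alpha \ell_t + L_t = \int_0^t \d X_s$, this rearranges to $\int_0^t \bf{1}_{\{X_s = 0\}}\,\d X_s = \tfrac12 L^0_t(X)$, and inserting $\d X_s = \d W_s - \alpha\,\d\ell_s + \d L_s$ reduces the left-hand side to $L_t$ via three observations: (i) $\int_0^t \bf{1}_{\{X_s = 0\}}\,\d W_s = 0$, because this continuous local martingale has quadratic variation $\int_0^t \bf{1}_{\{X_s = 0\}}\,\d s$, which vanishes by the occupation time formula since $\langle X\rangle_s = s$ and $\{0\}$ is Lebesgue-null; (ii) $\int_0^t \bf{1}_{\{X_s > 0\}}\,\d L_s = 0$ by Definition~\ref{def:skorokhod}, so $\int_0^t \bf{1}_{\{X_s = 0\}}\,\d L_s = L_t$; and (iii) $\int_0^t \bf{1}_{\{X_s = 0\}}\,\d \ell_s = 0$ almost surely, because $\ell$ is a deterministic continuous function (so $\d\ell$ carries no atom at $0$) and Proposition~\ref{prop:no_atom} gives $\pr(X_s = 0) = 0$ for $s \in (0, T)$, whence Tonelli yields $\ev\!\left[\int_0^t \bf{1}_{\{X_s = 0\}}\,\d\ell_s\right] = \int_0^t \pr(X_s = 0)\,\d\ell_s = 0$ and the integrand is nonnegative. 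Thus $L_t = \tfrac12 L^0_t(X)$ a.s., so $\ell_t = \ev[L_t] = \tfrac12 \ev[L^0_t(X)]$.

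It then remains to show $\ev[L^0_t(X)] = \int_0^t \mu_s(0)\,\d s$. By right-continuity of $a \mapsto L^a_t(X)$ and the occupation time formula, $L^0_t(X) = \lim_{\epsilon \downarrow 0}\tfrac1\epsilon \int_0^\epsilon L^a_t(X)\,\d a = \lim_{\epsilon \downarrow 0}\tfrac1\epsilon \int_0^t \bf{1}_{\{0 < X_s < \epsilon\}}\,\d s$ almost surely, and for $\epsilon \le 1$ the approximants are bounded by $\sup_{a \in [0,1]} L^a_t(X)$, which is integrable by the standard maximal inequality for local times of continuous semimartingales (applicable because $\langle X\rangle_t = t$ and the finite-variation part $-\alpha\ell + L$ of $X$ has total variation $\le \alpha\ell_t + L_t \in L^2$, using $L_t \le \alpha\ell_t + \sup_{s\le t}\lvert W_s\rvert$); alternatively one bounds their second moments through the Gaussian-type estimate on the transition density of reflected Brownian motion with bounded drift. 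Dominated convergence and $\pr(X_s = 0) = 0$ then give $\ev[L^0_t(X)] = \lim_{\epsilon \downarrow 0}\tfrac1\epsilon \int_0^t \pr(X_s < \epsilon)\,\d s = \lim_{\epsilon \downarrow 0}\int_0^t \bigl(\tfrac1\epsilon\int_0^\epsilon \mu_s(x)\,\d x\bigr)\,\d s$. For a.e.\ $s \in [0, t]$ one has $\mu_s \in W^{1,1}(\R_+)$, which by the one-dimensional Sobolev embedding has an absolutely continuous representative vanishing at infinity with $\lVert \mu_s\rVert_\infty \le \lVert \partial_x \mu_s\rVert_{L^1(\R_+)}$, so $\tfrac1\epsilon\int_0^\epsilon \mu_s(x)\,\d x \to \mu_s(0)$ while the integrand stays bounded by $\lVert \partial_x\mu_s\rVert_{L^1(\R_+)} \in L^1([0,t])$ (as $\mu \in L^1_{\mathrm{loc}}([0,T_0); W^{1,1}(\R_+))$); a further dominated convergence gives $\ev[L^0_t(X)] = \int_0^t \mu_s(0)\,\d s$. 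Hence $\ell_t = \tfrac12\int_0^t \mu_s(0)\,\d s$, and substituting $\d\ell_s = \tfrac12 \mu_s(0)\,\d s$ into \eqref{eq:pde_nsp} yields \eqref{eq:pde_strong}.

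The main obstacle is the reduction in the first step, specifically point (iii): the whole identity hinges on ruling out a contribution to $L_t$ from the times the particle spends exactly at the origin, which is precisely the no-atom property of Proposition~\ref{prop:no_atom} combined with the determinism and continuity of $\ell$. A secondary technical point is the interchange of limit and expectation in evaluating $\ev[L^0_t(X)]$, requiring a uniform-integrability input (the local-time maximal inequality or a heat-kernel bound), neither of which is difficult.
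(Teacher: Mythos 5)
Your proof is correct and follows essentially the same route as the paper: identify the reflection term with half the semimartingale local time at zero via Tanaka's formula, and then compute $\ev L^0_t$ through the occupation time formula, using right-continuity of $x \mapsto L^x_t$ and continuity of the $W^{1,1}$-density $\mu_s$ at the origin. The only difference is that you spell out details the paper leaves implicit (the $\int_0^t \bf{1}_{\{X_s = 0\}}\,\d \ell_s = 0$ step via Proposition \ref{prop:no_atom}, and the limit--expectation interchanges via a local-time maximal bound and the Sobolev bound $\lVert \mu_s\rVert_\infty \leq \lVert \partial_x \mu_s\rVert_{L^1(\R_+)}$), all of which are sound.
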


\begin{proof}
Applying the occupation time formula (e.g.\@ \cite[Chapter VI, Corollary 1.6]{revuz_continuous_2013}) to the semimartingale $X$ implies that
\begin{equation*}
    \int_0^t \bf{1}_{\{X_s \in [0, \epsilon]\}} \, \d s = \int_{[0, \infty)} \bf{1}_{\{x \in [0, \epsilon]\}} L^x_t \, \d x
\end{equation*}
for $\epsilon > 0$, where $L^x_t$ denotes the local time of $X$ at $x \geq 0$. Note that $L^x_t$ can be chosen to be continuous in $t$ and c\`adl\`ag in $x$. Taking expectation on both sides, dividing by $\epsilon$, and letting $\epsilon \to 0$ yields
\begin{equation} \label{eq:occ_time_form}
    \int_0^t \mu_s(0) \, \d s = \lim_{\epsilon \to 0} \int_0^t \frac{1}{\epsilon} \biggl(\int_0^{\epsilon} \mu_s(x) \, \d x\biggr) \, \d s = \lim_{\epsilon \to 0} \frac{1}{\epsilon} \int_{[0, \epsilon]} \ev L^x_t \, \d x = \ev L^0_t = 2 \ev L_t,
\end{equation}
where we used in the first equality that $\mu_s$ is continuous at $0$, in the third equality that $L^x_t$ is right-continuous at $x = 0$, and finally that the solution to the Skorokhod problem for $\xi + W - \alpha \ell$ and the local time at zero of the reflected process $X$ differ by a factor of $2$. To see the latter, apply Tanaka's formula \cite[Chapter VI, Theorem 1.5]{revuz_continuous_2013} to the positive part of $X_t$.
\end{proof}

We can also prove the following converse to Lemma \ref{lem:loc_time_zero}.

\begin{proposition} \label{prop:pde_to_sde}
Let $(\mu, T_0)$ be a solution to PDE \eqref{eq:pde_strong} and define $\ell \in C([0, T_0))$ by $\ell_t = \frac{1}{2} \int_0^t \mu_s(0) \, \d s$ for $t \in [0, T_0)$. Then $(\ell, T_0)$ solves McKean--Vlasov SDE \eqref{eq:non_linear_skorokhod}. In particular, PDE \eqref{eq:pde_strong} has a unique maximal solution.
\end{proposition}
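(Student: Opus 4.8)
The plan is to verify directly the only nontrivial requirement in Definition \ref{def:mfl_solution}, namely that $\ell_t = \ev L_t$ for $t \in [0,T_0)$, where $(X,L)$ denotes the solution to the Skorokhod problem for $\xi + W - \alpha\ell$. Write $b_t = \mu_t(0)$. Since $W^{1,1}(\R_+)$ embeds continuously into $C_0([0,\infty))$ with $\lvert f(0)\rvert \le \lVert f'\rVert_{L^1}$, we have $0 \le b_t \le \lVert\mu_t\rVert_{W^{1,1}}$, so $b \in L^1_{\textup{loc}}([0,T_0))$ and $\ell_t = \tfrac12\int_0^t b_s\,\d s$ is a well-defined, nondecreasing, locally absolutely continuous function with $\d\ell_t = \tfrac12 b_t\,\d t$. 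Set $\nu_t = \L(X_t)$. Running the computation \eqref{eq:ito_for_nsp}--\eqref{eq:pde_nsp} for this $\ell$ (legitimate because $\ell$ is continuous and of bounded variation and $\partial_x\varphi(0)=0$ kills the $\d L$-term) and substituting $\d\ell_t = \tfrac12 b_t\,\d t$ shows that $\nu$ satisfies
\[
    \d\langle\nu_t,\varphi\rangle = -\tfrac{\alpha b_t}{2}\langle\nu_t,\partial_x\varphi\rangle\,\d t + \tfrac12\langle\nu_t,\partial_x^2\varphi\rangle\,\d t,\qquad \varphi\in\cal{C},
\]
with $\nu_0 = \L(\xi)$. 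By hypothesis $\mu$ satisfies exactly this identity as well (this is PDE \eqref{eq:pde_strong}, now viewing the coefficient $\mu_t(0)=b_t$ as a given function), with the same initial datum $\mu_0=\L(\xi)$. As $b$ is a fixed function, this is a \emph{linear} Fokker--Planck equation on the half-line with a Neumann-type boundary condition, so the whole proof reduces to a uniqueness statement for this equation.

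To prove uniqueness, and hence $\nu = \mu$, I would argue by duality. Fix $T \in (0,T_0)$ and $\psi \in \cal{C}$. Mollify $b$ in time to get smooth $b^\epsilon \ge 0$ with $b^\epsilon \to b$ in $L^1([0,T])$, and let $\varphi^\epsilon_t(x) = \ev[\psi(\hat X^\epsilon_T)]$ be the value function of the reflected diffusion on $[t,T]$ with generator $\tfrac12\partial_x^2 - \tfrac{\alpha b^\epsilon_s}{2}\partial_x$ and normal reflection at $0$, started from $x$. Standard parabolic theory gives that $\varphi^\epsilon$ is a classical solution of $\partial_t\varphi^\epsilon_t + \tfrac12\partial_x^2\varphi^\epsilon_t - \tfrac{\alpha b^\epsilon_t}{2}\partial_x\varphi^\epsilon_t = 0$ on $[0,T]$ with $\varphi^\epsilon_T = \psi$ and $\partial_x\varphi^\epsilon_t(0)=0$; since the drift is constant in space, $\partial_x\varphi^\epsilon$ and $\partial_x^2\varphi^\epsilon$ solve the same equation with terminal data $\partial_x\psi$, $\partial_x^2\psi$ and homogeneous Dirichlet data at the boundary, so the maximum principle yields bounds on $\varphi^\epsilon_t,\partial_x\varphi^\epsilon_t,\partial_x^2\varphi^\epsilon_t$ depending only on $\psi$, uniformly in $\epsilon$ and $t$. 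Passing to the limit produces $\varphi \in C([0,T];\cal{C})$, locally absolutely continuous in $t$, with $\partial_t\varphi_t = -\tfrac12\partial_x^2\varphi_t + \tfrac{\alpha b_t}{2}\partial_x\varphi_t$ for a.e.\ $t$ and $\varphi_T = \psi$. A routine time-discretisation argument (the pairing $t\mapsto\langle\nu_t-\mu_t,\varphi_t\rangle$ is absolutely continuous with a.e.-derivative $\langle\nu_t-\mu_t,\,\partial_t\varphi_t + \tfrac12\partial_x^2\varphi_t - \tfrac{\alpha b_t}{2}\partial_x\varphi_t\rangle = 0$) then shows this pairing is constant, hence equal to $\langle\nu_0-\mu_0,\varphi_0\rangle = 0$, so $\langle\nu_T,\psi\rangle = \langle\mu_T,\psi\rangle$. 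Letting $\psi$ range over $C_c^\infty((0,\infty)) \cup \{e^{-x^2}\} \subset \cal{C}$, which separates finite signed measures on $[0,\infty)$, and $T$ over $(0,T_0)$, we conclude $\nu = \mu$ on $[0,T_0)$.

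Given $\nu = \mu$, the law of $X_t$ has the density $\mu_t$, which is continuous at $0$ for a.e.\ $t$, so the occupation-time argument from the proof of Lemma \ref{lem:loc_time_zero} applies verbatim to $X$ and gives $\ev L^0_t(X) = \int_0^t\nu_s(0)\,\d s = \int_0^t\mu_s(0)\,\d s$; since $L^0_t(X)=2L_t$ by Tanaka's formula, $\ev L_t = \tfrac12\int_0^t\mu_s(0)\,\d s = \ell_t$, so $(\ell,T_0)$ solves \eqref{eq:non_linear_skorokhod}. For the final claim, note that the argument above shows that \emph{every} solution $(\mu,T_0)$ of \eqref{eq:pde_strong} equals $\L(X_\cdot)$ for $X$ the Skorokhod solution driven by $\ell = \tfrac12\int_0^\cdot\mu_s(0)\,\d s$; two solutions $(\mu^1,T^1)$ and $(\mu^2,T^2)$ therefore have McKean--Vlasov interaction terms $\ell^1,\ell^2$ that agree on $[0,T^1\wedge T^2)$ by Theorem \ref{thm:exist_unique}, hence $X^1 = X^2$ there by pathwise uniqueness of the Skorokhod problem, hence $\mu^1 = \mu^2$ there; since being in $C([0,\cdot);L^1(\R_+))\cap L^1_{\textup{loc}}([0,\cdot);W^{1,1}(\R_+))$ is a local-in-time condition, gluing the totally ordered family of solutions yields a unique maximal one.

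The main obstacle is the linear uniqueness step: constructing the adjoint test function $\varphi$ with the regularity needed for the duality identity — in particular, the Neumann boundary condition and uniformly bounded first two spatial derivatives up to the boundary — while the coefficient $b_t = \mu_t(0)$ is merely locally integrable, and possibly unbounded, in time. The spatial homogeneity of the drift is what makes this tractable (it reduces the derivative estimates to maximum-principle bounds for the derivative equations with Dirichlet data), but the time regularisation and passage to the limit still require care. Everything else — Itô's formula, Tanaka's formula, the occupation-time formula, and the bootstrap to PDE uniqueness — is routine given the results already established.
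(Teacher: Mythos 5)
Your overall reduction is the same as the paper's: identify the law of the Skorokhod solution driven by $\ell$ with $\mu$, recover $\ell_t = \ev L_t$ from the occupation-time computation of Lemma \ref{lem:loc_time_zero} (equation \eqref{eq:occ_time_form}), and deduce uniqueness of the maximal PDE solution from Theorem \ref{thm:exist_unique}. The difference lies in how the identification $\L(X_t) = \mu_t$ is established. The paper delegates this to Proposition \ref{prop:pde_fixed_super}: truncate the drift at level $n$, invoke well-posedness of the linear equation with bounded drift, and compare the cumulative distribution functions through an $L^2$ energy estimate with spatial cutoffs. You instead observe (correctly, via \eqref{eq:ito_for_nsp}--\eqref{eq:pde_nsp}) that $\nu_t = \L(X_t)$ solves the same linear Fokker--Planck equation with frozen coefficient $b_t = \mu_t(0)$ and the same initial datum, and then prove uniqueness for this linear equation by duality against a backward Neumann problem with mollified drift. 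Both routes are legitimate; the paper's CDF-energy argument is designed precisely to avoid constructing a sufficiently regular solution of the adjoint problem when $b$ is only locally integrable in time, which is exactly where your scheme concentrates its difficulty.

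Two points in your duality step need repair or completion. First, the claim that $\partial_x^2\varphi^\epsilon$ satisfies the same equation with \emph{homogeneous Dirichlet} data at $x = 0$ is not correct: setting $u = \partial_x\varphi^\epsilon$ (which does satisfy $u_t(0) = 0$) and evaluating its equation at $x = 0$ yields for $v = \partial_x^2\varphi^\epsilon$ the Robin condition $\partial_x v_t(0) = \alpha b^\epsilon_t\, v_t(0)$, not $v_t(0) = 0$. The uniform bound $\lVert v\rVert_\infty \leq \lVert \partial_x^2\psi\rVert_\infty$ can still be rescued (at a positive boundary maximum, Hopf's lemma forces $\partial_x v_t(0) < 0$, contradicting the Robin condition since $b^\epsilon \geq 0$), but the step as written is inaccurate. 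Second, the passage $\epsilon \to 0$ and the use of time-dependent test functions are the genuinely delicate part when $b$ is merely in $L^1_{\textup{loc}}$, and your sketch leaves them at the level of ``requires care''. You can shortcut the construction of the limit $\varphi$ by pairing $\nu_t - \mu_t$ directly with $\varphi^\epsilon_t$: the a.e.\ derivative of $t \mapsto \langle \nu_t - \mu_t, \varphi^\epsilon_t\rangle$ equals $\tfrac{\alpha}{2}(b^\epsilon_t - b_t)\langle \nu_t - \mu_t, \partial_x\varphi^\epsilon_t\rangle$, so $\lvert \langle \nu_T - \mu_T, \psi\rangle\rvert \leq \alpha \lVert\partial_x\psi\rVert_\infty \int_0^T \lvert b_t - b^\epsilon_t\rvert \, \d t \to 0$, which needs only the uniform gradient bound; the chain rule for this pairing still has to be justified from the weak formulations (which are stated for time-independent test functions), but that is routine. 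With these repairs your route goes through and constitutes a genuine alternative to Proposition \ref{prop:pde_fixed_super}; the remaining steps (occupation-time identity, gluing to obtain the unique maximal solution) match the paper's argument.
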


\begin{proof}
Let $\ell$ be defined as in the statement of the proposition and let $(X, L)$ denote the solution to the Skorokhod problem for $\xi + W - \alpha \ell$, where $\xi \sim \mu_0$. By Proposition \ref{prop:pde_fixed_super} from the appendix, we have that $\mu_t = \L(X_t)$. Consequently, it follows from \eqref{eq:occ_time_form} that $\ell_t = \ev L_t$, so $(\ell, T_0)$ solves McKean--Vlasov SDE \eqref{eq:non_linear_skorokhod}. The uniqueness statement is then a direct consequence of Theorem \ref{thm:exist_unique}.
\end{proof}

\begin{remark}
Proposition \ref{prop:pde_to_sde} implies that $T \geq T_0$. The reverse inequality is less obvious. In the subcritical and the critical case, we expect that $T_0 = \infty$ for the maximal solution $(\mu, T_0)$ of PDE \eqref{eq:pde_strong}. Indeed, under an additional integrability assumption on the initial density $\mu_0$, this was established by Calvez et al.\@ \cite{calvez_analysis_2012} (for more detail, we refer to Section \ref{sec:cell_polarisation}). For the supercritical case, it is conceivable that $T_0 < T$ in general. Such an early breakdown of PDE \eqref{eq:pde_strong} would likely be due to the formation of singularities at the origin in the sense that
\begin{equation*}
    \limsup_{x \to 0} \mu_{T_0}(x) = \infty.
\end{equation*}
Unlike atoms, where $\mu_{T_0}(\{0\}) > 0$ and which are absent by Proposition \ref{prop:no_atom}, these types of singularities are not ruled out by the fact, established in Proposition \ref{prop:local_hoelder}, that $\ell$ is locally $1/2$-H\"older continuous. Indeed, \cite[Theorem 2.10]{burdzy_rbm_2003} demonstrates that such a singularity does occur at time $T_0$ if $\ell$ is replaced by a function that behaves like $t \mapsto (\sqrt{T_0} - \sqrt{T_0 - t})$ near $T_0$. It seems plausible that the expected local time $\ell$ exhibits this type of square-root behaviour if $\alpha > 1$, so that singularities could appear before time $T$.  
\end{remark}

\subsection{A Model for Liquidity in Financial Systems} \label{sec:liquidity}

One interpretation of the finite system, SDE \eqref{eq:ps}, and its mean-field limit \eqref{eq:non_linear_skorokhod} is as a toy model for liquidity in financial systems. The particles represent banks (or other liquidity-sensitive financial institutions, such as hedge funds). Each bank's liquidity position $X^i_t$ is composed of an initial position $\xi_i$, ordinary liquidity flows $W^i_t$ arising from day-to-day business activities, liquidity-raising actions $L^i_t$ such as fire sales and liquidity hoarding, and the systemic impact $- \sum_{j = 1}^N q_{ij} L^j_t$ caused by the liquidity-raising efforts of other institutions. The weights $q_{ij}$ quantify the banks' exposure to the activities of others. The presence of the systemic impact is best motivated through a simple example: suppose that bank $i$ sells assets to improve its liquidity position. If the institution finds itself in a distressed situation and has to liquidate quickly, it will generate price impact, pushing down the market price of the asset. These mark-to-market losses will propagate to other institutions holding the same asset, reducing their potential to raise liquidity, which feeds into the systemic impact. Additionally, the party that bank $i$ is trading with (not necessarily a bank itself), will have to finance the asset purchases with money deposited in an account at one of the institutions $j \in \{1, \dots, N\}$ (which need not be different from $i$). Thus, cash will flow from the account the trading partner holds with bank $j$ to bank $i$, corresponding to a liquidity outflow from $j$ to $i$. Both these effects (among others) account for the systemic impact $- \sum_{j = 1}^N q_{ij} L^j_t$. Note that because the actions $L^i_t$ are modelled as reflections at the origin, we assume that a bank only raises liquidity if all its resources are exhausted. Exploring strategically chosen actions and the resulting game formulation is a natural avenue for future research. 

Since the liquidity raised by the individual banks is withdrawn from the remaining banking system and since there may be additional externalities such as price impact, the spectral radius $\rho(Q)$ is at least one. Consequently, our model gives rise to the sudden formation of liquidity stresses observed during financial crises, manifesting as a breakdown of the system as discussed in Theorems \ref{thm:exist_unique_ps} and \ref{thm:exist_unique}. Assume for instance that the systemic impact is uniform, i.e.\@ $q_{ij} = \frac{\alpha}{N}$, so that $Q[I] = Q[I^a]= (\frac{\alpha}{N})_{i, j \in I}$ and $\rho(Q[I]) = \frac{\alpha \#I}{N}$. Then, by Theorem \ref{thm:exist_unique_ps}, a breakdown occurs the first time that
\begin{equation*}
    \frac{\alpha \#I_t}{N} = \rho(Q_t) \geq 1,
\end{equation*}
meaning that at least $\frac{N}{\alpha}$ banks' liquidity reserve is depleted. In the financially relevant case $\alpha \geq 1$, this situation is inevitable (if $\alpha = 1$, some mild assumption on $A$ has to be satisfied, cf.\@ Theorem \ref{thm:exist_unique_ps} \ref{it:finite_blow_up_ps} (b)).
% \ref{it:nondegenerate_vol} 
In other words, if too many banks have to raise liquidity at the same time, the financial system is overwhelmed and a breakdown occurs. In the critical case $\alpha=1$, Remark \ref{rem:resource_sharing} even provides asymptotics for the timing of the breakdown. 

\subsection{Related Literature}

Before discussing relevant literature more broadly, we will relate McKean--Vlasov SDE \eqref{eq:non_linear_skorokhod} and PDE \eqref{eq:pde_nsp} to two existing lines of research and comment on the novel insights our contributions provide.

\subsubsection{Cell polarisation model} \label{sec:cell_polarisation}

By setting $n(t, x) = \alpha\mu_t(x)$, one can see that $n$ solves the model for spontaneous cell polarisation
\begin{equation} \label{eq:cell_polarisation}
    \d\langle n(t, \cdot), \varphi\rangle = -\frac{n(t, 0)}{2}\langle n(t, \cdot), \partial_x \varphi\rangle \, \d t + \frac{1}{2} \langle n(t, \cdot), \partial_x^2\varphi\rangle \, \d t, \qquad \varphi \in \cal{C},
\end{equation}
analysed by Calvez et al.\@ \cite{calvez_analysis_2012}. The field $n(t, x)$ represents the concentration of molecular markers in an idealised one-dimensional cell with cell boundary at the origin. The breakdown phenomenon, which occurs when a sufficiently large mass of markers is concentrated near the boundary, corresponds to the polarisation event. The interaction parameter $\alpha$ in our framework is replaced by the initial mass $M = \int_0^{\infty} n(0, x) \, \d x$, the different regimes being $M \in [0, 1)$ (subcritical), $M = 1$ (critical), and $M > 1$ (supercritical). \cite[Theorem 1.1]{calvez_analysis_2012} establishes global existence in the subcritical and critical case, for initial conditions satisfying
\begin{equation*}
    \int_0^{\infty} \bigl(1 + x + (\log n(0, x))_+\bigr) n(0, x) \, \d x < \infty.
\end{equation*}

Our Proposition \ref{prop:pde_to_sde} complements their work by a uniqueness result for all regimes. Moreover, our probabilistic formulation, McKean--Vlasov SDE \eqref{eq:non_linear_skorokhod}, allows us to obtain the existence of maximal solutions even in the supercritical regime and for measure-valued initial conditions. Next, Lemma \ref{lem:blow_up} shows that breakdowns occur in the supercritical case regardless of the initial condition, which was left as an open question in \cite{calvez_analysis_2012}, where breakdowns were only shown to materialise for nonincreasing initial data (cf.\@ \cite[Theorem 1.2]{calvez_analysis_2012}). However, we note that \cite[Theorem 1.2]{calvez_analysis_2012} was generalized to arbitrary initial densities by \cite{lepoutre_cell_polarisation_2014} based on a comparison principle. We address another point highlighted in \cite{calvez_analysis_2012} that was already raised in earlier work by Fasano et al.\@ \cite{fasano_remarks_1990}. It concerns the ability to continue PDE \eqref{eq:pde_strong} after the occurrence of a breakdown. We commented above on the possibility of a breakdown for the PDE materialising before the solution of McKean--Vlasov SDE \eqref{eq:non_linear_skorokhod} collapses, i.e.\@ $T_0 < T$. In that case, PDE \eqref{eq:pde_strong} can be continued in the form of McKean--Vlasov SDE \eqref{eq:non_linear_skorokhod}. The solution $(\ell, T)$ will satisfy PDE \eqref{eq:pde_nsp} for all $t \in [0, T)$, however, since $\mu_t$ need not be an element of $W^{1, 1}(\R_+)$ beyond $T_0$, we cannot identify $\dot{\ell}_t$ with $\mu_t(0)$. In fact, $\ell$ may not even be weakly differentiable. A continuation beyond $T$ is not possible due to Theorem \ref{thm:exist_unique}. 

\subsubsection{Supercooled Stefan problem} \label{sec:supercooled}

Calvez et al.\@ \cite{calvez_analysis_2012} highlight another related model, the supercooled Stefan problem, describing the movement of the ice-water interface for a supercooled liquid in one dimension. Let $\cal{C}_0$ denote the space of $\varphi \in C^2_c(\R)$ such that $\varphi(0) = 0$ and define the cumulative distribution function (CDF) $F_t(x) = \mu_t([0, x])$ for the solution $(\mu, T_0)$ of PDE \eqref{eq:pde_strong}. Then, repeatedly applying integration by parts, one finds
\begin{equation}
    \d \langle F_t, \varphi\rangle = -\alpha \dot{\ell}_t \langle F_t, \partial_x \varphi\rangle \, \d t + \frac{1}{2}\langle F_t, \partial_x^2 \varphi\rangle \, \d t
\end{equation}
for $\varphi \in \cal{C}_0$. In fact, since $F_t \in W^{2, 1}_{\textup{loc}}(\R)$, we can write this in the strong form
\begin{equation} \label{eq:supercooled_strong}
    \partial_t F_t(x) = \frac{\alpha \partial_x F_t(0)}{2} \partial_x F_t(x) + \frac{1}{2} \partial_x^2 F_t(x)
\end{equation}
for a.e.\@ $(t, x) \in (0, T_0) \times \R_+$, with boundary condition $F_t(0) = 0$. That is, PDE \eqref{eq:supercooled_strong} has an absorbing boundary at the origin and $\frac{1}{2} \partial_x F_t(0)$ measures the flux across this boundary. Note that the initial condition $F_0(x) = \mu_0([0, x])$ for this PDE is not a probability measure, but a cumulative distribution function, so it is not integrable. 

The PDE \eqref{eq:supercooled_strong} is the supercooled Stefan problem, which received renewed interest in recent years, due to a novel probabilistic formulation through a McKean--Vlasov SDE with singular interaction through hitting times \cite{delarue_global_2015, delarue_particle_2015, nadtochiy_particle_2019, nadtochiy_mean_2020, hambly_mckeanvlasov_2019}. The probabilistic representation allows for a continuation of the supercooled Stefan problem (with a probability measure as initial data) after the occurrence of a blow-up in the PDE formulation. These blow-ups lead to a sudden downward shift in the solution profile. To make this more precise, let us assume for the moment that we started PDE \eqref{eq:supercooled_strong} from an initial probability density $f$, so that $F_t$ is integrable. If too much mass builds up near the boundary at some time $t \geq 0$, a jump $\Delta \ell_t$ may appear in the trajectories of the cumulative flux $\ell$. This instantaneously displaces the solution profile according to the rule $F_t(x) = F_{t-}(x + \alpha \Delta \ell_t)$. In particular, the mass $\int_0^{\alpha \Delta \ell_t} F_{t-}(x) \, \d x$ that drops below zero gets absorbed. Since $\ell$ measures the flux across the boundary, it must hold that
\begin{equation*}
    \Delta \ell_t = \int_0^{\alpha \Delta \ell_t} F_{t-}(x) \, \d x.
\end{equation*}
However, not all possible solutions of this equation yield admissible jump sizes, that allow for the continuation of the system after the jump without the immediate occurrence of another jump. To ensure that no jump can occur immediately after the restart, the amount of mass near the origin after the initial jump has to be sufficiently small. The smallest jump size for which this is guaranteed, is provided by the so called physical jump condition
\begin{equation*}
    \Delta \ell_t = \inf\biggl\{\delta > 0 \define \int_0^{\alpha \delta } F_{t-}(x) \, \d x < \delta\biggr\}.
\end{equation*}
If $F_{t-}$ is integrable, then the right-hand side is finite, and after resolving the jump, the system can be continued. In contrast, if $F_{t-}$ is a cumulative distribution function, as is the case for us, then the only positive solution, if it exists, is infinity. Hence, any jump will eliminate the entire mass. Consequently, unlike for integrable initial data, the system cannot be extended beyond a jump, consistent with the findings from Theorem \ref{thm:exist_unique}.

Proposition \ref{prop:no_atom} rules out the existence of atoms at the origin in the law $\mu_t$ before the breakdown time $T$, implying that $F_t(0) = 0$ for the solution of the supercooled Stefan problem \eqref{eq:supercooled_strong} with CDF-valued initial data. A similar result may hold for initial probability densities, suggesting a potential refinement of the description of the supercooled Stefan problem by Delarue, Nadtochiy \& Shkolnikov \cite[Theorem 1.1]{delarue_global_2022}, ruling out the possibility that $F_t(0) \in (0, 1/\alpha)$. 

Let us also mention that the stationary and self-similar solutions in Theorem \ref{thm:mfstationary} correspond to the well-known travelling wave and self-similar solutions to the supercooled Stefan problem after the change of coordinates $x\mapsto x+\ell_t$. For a discussion of the supercooled Stefan problem with more general non-integrable initial data and its associated particle systems see \cite{BFH_supercooled_2025}

\subsubsection{Additional Related Literature}

Semimartingale reflected Brownian motion in the orthant arises in the study of heavy-traffic limits for queuing systems, and we refer the reader to the survey paper \cite{williams_semimartingale_1995}, which covers the development of SRBM in the period from the 1980s to the mid-1990s. The most closely related article from this stream of literature, is the one by Harrison \& Reiman \cite{harrison_reflected_1981} which we already discussed at the beginning of the introduction. They study nonnegative interaction matrices $Q$ whose spectral radius is strictly less than one and we extend this to the case $\rho(Q) \geq 1$, which introduces the possibility of finite-time breakdowns. A generalisation in a different direction was achieved by Taylor \& Williams \cite{taylor_existence_1993} who consider reflection matrices $R$ (replacing $\mathbb{I} - Q$), which are completely-$\mathcal{S}$, meaning that for any nonempty $I \subset \mathbf{N}$, there exists $x \in [0, \infty)^I$ such that $R[I]x > 0$. This allows for interaction matrices $Q$ with negative entries, which are outside the scope of our work. However, if the entries of $Q$ are nonnegative, then $R = \bb{I} - Q$ is completely-$\mathcal{S}$ if and only if $\rho(Q) < 1$, so the results from \cite{taylor_existence_1993} are complementary to our extension. It would be interesting to investigate whether by combining the insights of Taylor \& Williams with ours, it is possible to arrive at a general theory of SRBM with an arbitrary reflection matrix. We leave this problem for future research. 

While Taylor \& Williams establish weak existence and uniqueness in law of SRBM, in recent work Bass \& Burdzy \cite{bass_srbm_2025} show that if, in addition to $R$ being completely-$\mathcal{S}$, the matrix $\lvert Q\rvert = (\lvert q_{ij}\rvert)_{ij}$ has spectral radius at most one, then the solution is strong and exhibits pathwise uniqueness. In general, strong wellposedness does not carry over to the supercritical regime. Indeed, for $N = 2$, Bass \& Burdzy \cite{bass_non_unique_srbm_2024} show that in many cases strong existence and pathwise uniqueness fail to hold if $\rho(\lvert Q\rvert) > 1$, even when $R$ is completely-$\mathcal{S}$.

Note that \cite{taylor_existence_1993} insists on the nondegeneracy of the covariance matrix $A$ of the $N$-dimensional Brownian motion $\mathbf{W}$, which is used at several stages throughout their proofs. In the context of SRBM with degenerate covariance matrices, an article by Ichiba \& Karatzas \cite{ichiba_degenerate_2022} analyses the behaviours that arise in a two-dimensional system with interaction matrix $Q$ given by $q_{ii} = 0$ and $q_{ij} = 1/2$ for $i$, $j \in \{1, 2\}$ with $i \neq j$. This particular interaction appears in the study of gaps between the order statistics (or rankings) of a three-dimensional diffusion process. Depending on the covariance structure $A$, surprisingly different types of behaviours emerge (cf. \cite[Table 1]{ichiba_degenerate_2022}). In particular, if the Brownian motions of the two particles are perfectly negatively correlated, the particles will not be at the origin simultaneously, while this situation will occur for independent Brownian motions. The dynamic graph structure we introduce in this work may be useful in pushing this analysis beyond the planar setting.

The mean-field limit \eqref{eq:non_linear_skorokhod} is an instance of a McKean--Vlasov SDE with interaction through a boundary condition, in this case a reflecting boundary. Other boundary interaction have been previously studied, with most attention being dedicated to the absorbing boundary. The latter leads to a McKean--Vlasov SDE with singular interaction through hitting times. Here, particles are killed as soon as they hit the origin, and the interaction occurs through the CDF $\mathbb{P}(\tau \leq t)$ of the hitting time $\tau$, which plays a role similar to $\ell_t$ in \eqref{eq:non_linear_skorokhod}. These types of McKean--Vlasov SDEs appear in the relatively early works of Nadtochiy \& Shkolnikov \cite{nadtochiy_particle_2019, nadtochiy_mean_2020} as well as Hambly, Ledger \& S\o{}jmark \cite{hambly_mckeanvlasov_2019}. In turn, those papers build on the earlier articles of C\'{a}ceres, Carrillo, and Perthame \cite{caceres_analysis_2011} and Delarue, Inglis, Rubenthaler, and Tanr\'{e} \cite{delarue_global_2015,delarue_particle_2015}, which consider McKean--Vlasov SDEs where particles are reset after a hitting time occurs, instead of being absorbed. Since then many generalisations and variations for the problem with absorbing boundary have appeared: addition of common noise \cite{ledger_at_2021, ledger_supercooled_2024}, higher dimensional problems \cite{nadtochiy_stefan_radial_2024, guo_stefan_radial_2024}, versions with control \cite{cuchiero_optimal_2024, hambly_mvcp_arxiv_2023, bayraktar_systemic_2023}, and regularisation of the interaction through convolution-type delay \cite{hambly_spde_model_2019}.  Many of these extensions could also be considered for McKean--Vlasov SDE \eqref{eq:non_linear_skorokhod}. 

Let us also note here that the characterisation of the breakdown behaviour for the finite system in terms of the Perron--Frobenius eigenvalue of the interaction matrix of active nodes $Q[\bf{N}^a]$ closely resembles the condition from \cite[Theorem 3.3]{nadtochiy_mean_2020} guaranteeing the presence of blow ups in a networked version of McKean--Vlasov SDEs with singular interaction through hitting times. In \cite{nadtochiy_mean_2020} several populations of particles interact through hitting time distributions mediated by an interaction matrix. If this matrix has a strictly positive \textit{logarithmic} Perron--Frobenius eigenvalue, a blow up occurs. Extensions of \eqref{eq:non_linear_skorokhod} to a similar networked setting could be admissible using non-exchangeable systems in the pre-limit and graphon theory for the limiting object (see, for instance, \cite{bayraktar_graphon_2023,bet_weakly_2024,coppini_nonlinear_2025}, and references within). The limiting behaviour for the non-symmetric case is an interesting direction of research in light of the non-trivial dynamic graph structure described in Theorem \ref{thm:exist_unique_ps}.

As discussed in Section \ref{sec:supercooled}, McKean--Vlasov SDEs with singular interaction through hitting times provide a probabilistic representation of the supercooled Stefan problem. In particular, the CDF $\mathbb{P}(\tau \leq t)$ corresponds to the cumulative flux across the absorbing boundary, which exhibits jump discontinuities. This feature has made this class of McKean--Vlasov SDEs popular in the modelling of solvency contagion in banking networks \cite{nadtochiy_particle_2019, hambly_mckeanvlasov_2019, hambly_spde_model_2019, cuchiero_optimal_2024, hambly_mvcp_arxiv_2023, bayraktar_systemic_2023}, where the blow-ups are interpreted as insolvency cascades through which a macroscopic part of the banking system is wiped out. Similarly to our financial model from Section \ref{sec:liquidity}, in this framework the particles represent banks. But instead of modelling the banks' liquidity, the state $X_t$ corresponds to the capital of a bank. As such, interaction through hitting times permits the modelling of solvency contagion as opposed to systemic liquidity.  

Let us also briefly mention the work by Baker \& Shkolnikov \cite{baker_zero_2022} as well as Hambly \& Meier \cite{hambly_mckean_vlasov_2022} on singular interactions through an elastic boundary condition. This boundary type can be viewed as an interpolation between the absorbing and reflecting regime. Indeed, the latter two arise in limit regimes of the elasticity parameter (cf.\@ \cite[Section 5]{hambly_mckean_vlasov_2022}).

Lastly, we highlight the McKean--Vlasov SDE with reflection studied by Coghi et al.\@ \cite{coghi_mve_reflection_2022}. The domain of this diffusion equation is the unit interval $[0, 1]$ with reflection at the two boundary points $0$ and $1$. In addition, the system has a prescribed mean enforced by a singular interaction term in the dynamics of the representative particle. This interaction term depends on the average of the reflection term, placing the system in the critical regime. However, breakdowns for the particle system (and less surprisingly the mean-field limit) are avoided due to the two-sided nature of the reflection and the mean-constraint. 

\subsection{Outline of the Paper}

We conclude this section with an outline of the paper. In Section \ref{sec:ps}, we study the dynamic graph structure determined by $Q$ and $A$, construct the unique maximal solution of the finite system \eqref{eq:ps}, and characterise the breakdown time, proving Theorem \ref{thm:exist_unique_ps}. The existence and uniqueness of the mean-field limit \eqref{eq:non_linear_skorokhod} is the subject of Section \ref{sec:skorokhod_problem}, culminating in the proof of Theorem \ref{thm:exist_unique}. There we also establish the propagation of chaos result, Theorem \ref{thm:poc}. Finally, in Section \ref{sec:stationary}, we derive the stationary and self-similar profiles for the mean-field limit and establish convergence to these profiles as $T \to \infty$. This proves Theorem \ref{thm:mfstationary}.

\section{Interacting Particle System} \label{sec:ps}

In this section, we will study the finite particle system \eqref{eq:ps}, establishing existence and uniqueness and analysing the breakdown phenomenon. In the course of that, we will frequently draw on the notation introduced above Definition \ref{def:skorokhod} without explicitly mentioning it.

The following lemma establishes a fundamental link between the graph structure and the set of active nodes $I^a$ associated with a subset $I \subset \{1, \dots, N\}$ and the Frobenius left-eigenvectors $v \in \cal{E}(I^a)$.

\begin{lemma} \label{eq:pos_eig_entry}
Let $I \subset \{1, \dots, N\}$ be such that $\rho(Q[I^a]) > 0$. Then for any $v \in \cal{E}(I^a)$, there exists $i \in I^a$ such that $v_i a_{ii} > 0$.
\end{lemma}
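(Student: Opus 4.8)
The plan is to argue by contradiction. Suppose that $v \in \cal{E}(I^a)$ satisfies $v_i a_{ii} = 0$ for every $i \in I^a$. Let $J = \{i \in I^a \define v_i > 0\}$ be the support of $v$; since $\rho(Q[I^a]) > 0$, the eigenvector $v$ is nonzero, so $J \neq \emptyset$, and by assumption $a_{ii} = 0$ for all $i \in J$.

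\medskip
\noindent\textbf{Step 1: The support $J$ is ``closed'' under the graph $E_{I^a}$ in the relevant direction.} From $v^{\top} Q[I^a] = \rho(Q[I^a]) v^{\top}$ we read off, for each $j \in I^a$,
\begin{equation*}
    \sum_{i \in I^a} v_i \, q_{ij} = \rho(Q[I^a]) \, v_j.
\end{equation*}
Taking $j \in I^a \setminus J$ (so $v_j = 0$) and using $v_i, q_{ij} \geq 0$ forces $v_i q_{ij} = 0$ for all $i \in I^a$; hence $q_{ij} = 0$ whenever $i \in J$ and $j \notin J$. Recalling that $(i,j) \in E_{I^a}$ means $q_{ji} > 0$, this says: there is no edge in $E_{I^a}$ \emph{from} a vertex outside $J$ \emph{into} $J$. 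Equivalently, $J$ is reachable (via directed paths in $E_{I^a}$) only from within $J$ itself.

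\medskip
\noindent\textbf{Step 2: Derive a contradiction with the definition of active nodes.} Pick any $i \in J \subset I^a$. By definition of $I^a$, there is a vertex $k \in I^a$ with $a_{kk} > 0$ and a directed path $(i_0, i_1, \dots, i_n)$ in $E_{I^a}$ with $i_0 = k$, $i_n = i$, and all $i_\ell \in I^a$. Since $i_n = i \in J$, Step 1 applied along the path (walking backwards: if $i_{\ell+1} \in J$ and $(i_\ell, i_{\ell+1}) \in E_{I^a}$ then $i_\ell \in J$, because an edge into $J$ can only originate in $J$) shows inductively that $i_\ell \in J$ for all $\ell$; in particular $k = i_0 \in J$. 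But then $a_{kk} = 0$ by our standing assumption on $J$, contradicting $a_{kk} > 0$. Hence no such $v$ exists, and the lemma follows.

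\medskip
I expect the only subtle point to be the bookkeeping around the index inversion in the definition of $E_{I^a}$ (edges track $q_{ji} > 0$ because we work with left-eigenvectors), so I would state the reachability consequence of Step 1 carefully as ``$J$ is a union of strongly connected components with no incoming edges from outside'' and then simply trace the activating path backwards into $J$. Everything else is a direct manipulation of the Perron--Frobenius eigen-relation together with nonnegativity of the entries of $Q$ and $v$.
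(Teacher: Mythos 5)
Your proof is correct and follows essentially the same route as the paper's: both trace the activating path backwards and use the left-eigenvector relation together with nonnegativity of $Q$ and $v$ to propagate positivity of $v$ from its support back to a node $k$ with $a_{kk} > 0$. The only cosmetic difference is that you package this as a contradiction via closure of the support $J$ under incoming edges (which, incidentally, never needs $\rho(Q[I^a]) > 0$), whereas the paper runs a direct backward induction along the path using $\rho(Q[I^a])\, v_{i_{k-1}} \geq v_{i_k} q_{i_k i_{k-1}} > 0$.
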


\begin{proof}
Since $v$ is a left-eigenvector, there exists $j \in I^a$ with $v_j > 0$. Hence, by definition of $I^a$, there exists $i \in I^a$ with $a_{ii} > 0$ and a path $(i_0, \dots, i_n)$ in $(I, E_I)$ with $i_0 = i$ and $i_n = j$. It follows from the definition of an active node that $i_k \in I^a$ for $k = 0$,~\ldots, $n$, so the path also lies in $(I^a, E_{I^a})$. Now, we will proceed inductively to show that $v_{i_k} > 0$ for all $k \in \{0, 1, \dots, n\}$, starting from $k = n$ and decreasing to $k = 0$. Since $a_{ii} > 0$ and $i = i_0$, this implies the desired inequality $v_i a_{ii} > 0$. The induction statement is trivially satisfied for $k = n$, so let us assume that it holds for some $1 \leq k \leq n$. Then by definition of $v$, we have
\begin{equation*}
    \rho(Q[I^a]) v_{i_{k - 1}} = \sum_{i' \in I^a} v_{i'} q_{i', i_{k - 1}} \geq v_{i_k} q_{i_k i_{k - 1}}.
\end{equation*}
Since $(i_{k - 1}, i_k) \in E_{I^a}$, we have $q_{i_k i_{k - 1}} > 0$. The spectral radius $\rho(Q[I^a])$ is positive by assumption and positivity of $v_{i_k}$ holds by the induction hypothesis. Thus, it follows that $v_{i_{k - 1}} > 0$. This completes the induction and, consequently, the proof.
\end{proof}

\begin{lemma} \label{lem:blow_up_time_max}
Let $(\bf{L}, \tau)$ be a solution to SDE \eqref{eq:ps}. Then $\rho(Q_t) < 1$ for all $t \in [0, \tau)$ almost surely.
\end{lemma}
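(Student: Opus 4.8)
The plan is to argue by contradiction: suppose there is a solution $(\bf{L}, \tau)$ and a time $t_0 \in [0, \tau)$ with $\rho(Q_{t_0}) \geq 1$, on an event of positive probability. Here $Q_{t_0} = Q[I^a_{t_0}]$, so the assumption says that the minor of $Q$ indexed by the active nodes at time $t_0$ has a nonnegative left-eigenvalue at least one. By the Perron--Frobenius theorem, $\cal{E}(I^a_{t_0})$ is nonempty, so we may pick a left-eigenvector $v \in [0, \infty)^{I^a_{t_0}}$ of $Q_{t_0}$ with eigenvalue $\lambda := \rho(Q_{t_0}) \geq 1$. Extend $v$ to a vector in $[0, \infty)^N$ by setting $v_i = 0$ for $i \notin I^a_{t_0}$, and consider the scalar process $Y_t = \sum_{i=1}^N v_i X^i_t$. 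The idea is that near $t_0$ the feedback from the active nodes will force $Y$ to have a strictly negative drift that cannot be compensated, contradicting $Y_t \geq 0$.

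The key steps, in order. First, fix a (random) window $[t_0, t_0 + \delta)$ on which the set of active nodes does not grow, i.e.\ $I^a_t \subseteq I^a_{t_0}$; this is where the combinatorial structure enters, and I would justify it by a right-continuity / stability argument for the index sets $I_t$ and $I^a_t$ using continuity of $\bf{X}$ — a particle strictly positive at $t_0$ stays positive for a short (random) time, and the definition of active node only shrinks when $I$ shrinks. Second, using the vector form \eqref{eq:ps_vector}, compute
\begin{equation*}
    Y_t - Y_{t_0} = \sum_{i=1}^N v_i (W^i_t - W^i_{t_0}) + \sum_{i=1}^N v_i \bigl((\bf{L}_t - \bf{L}_{t_0}) - Q(\bf{L}_t - \bf{L}_{t_0})\bigr)_i .
\end{equation*}
Third, analyse the reflection term: by the minimality property \ref{it:minimality_property}, only active nodes contribute to increments of $\bf{L}$ on $[t_0, t_0+\delta)$, and since $I^a_t \subseteq I^a_{t_0}$ there, all such contributions come from indices in $I^a_{t_0}$; restricted to those indices, $v^\top(\bb{I} - Q)[I^a_{t_0}] = (1 - \lambda) v^\top \leq 0$ componentwise because $v \geq 0$ and $\lambda \geq 1$. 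Hence the reflection contribution to $Y_t - Y_{t_0}$ is $\leq 0$. Moreover, by Lemma~\ref{eq:pos_eig_entry} (applied to $I = I_{t_0}$, assuming $\lambda \geq 1 > 0$) there is $i^\star \in I^a_{t_0}$ with $v_{i^\star} a_{i^\star i^\star} > 0$, and $i^\star$ lies at the origin at $t_0$, so its local time $L^{i^\star}$ is strictly increasing on any right-neighbourhood of $t_0$; if $\lambda > 1$ this already makes the reflection contribution strictly negative, and the Brownian part $\sum v_i(W^i_t - W^i_{t_0})$ oscillates around $0$, so $Y$ goes strictly below $0$ immediately.

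The remaining case is $\lambda = 1$, where the reflection term contributes exactly zero and the sign must come from the Brownian motion alone. Here I would observe that $Y_t - Y_{t_0} \leq M_t := \sum_{i=1}^N v_i (W^i_t - W^i_{t_0})$, a continuous martingale with quadratic variation $\langle M\rangle_t - \langle M\rangle_{t_0} = (t-t_0)\, v^\top A[I^a_{t_0}] v$. By Lemma~\ref{eq:pos_eig_entry} we have $v^\top A[I^a_{t_0}] v \geq v_{i^\star}^2 a_{i^\star i^\star} > 0$ (the diagonal contribution alone is positive; any off-diagonal cross terms only need not cancel it, which holds since the matrix $A$ is positive semidefinite, so $v^\top A[I^a_{t_0}] v \geq 0$ and is in fact $> 0$ because it dominates $v_{i^\star}^2 a_{i^\star i^\star}$ after diagonalising — more carefully, positive semidefiniteness of $A$ gives $v^\top A v = 0$ only if $Av = 0$, which forces $a_{i^\star i^\star} v_{i^\star}^2 \leq (Av)_{i^\star} v_{i^\star}\cdot(\text{sign issues})$, so I would instead simply note $v^\top A[I^a] v > 0$ whenever some $v_i a_{ii} > 0$ and $A \succeq 0$). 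Thus $M$ is a nonconstant continuous martingale started at $0$, hence by the martingale representation / time-change (Dambis--Dubins--Schwarz) it is a time-changed Brownian motion and a.s.\ takes strictly negative values in every right-neighbourhood of $t_0$. Since $X^i_{t_0} = 0$ for all $i \in I^a_{t_0} = \{i : v_i > 0\}$ (active nodes are at the origin) we have $Y_{t_0} = 0$, so $Y_t < 0$ for some $t \in (t_0, t_0 + \delta)$, contradicting $X^i_t \geq 0$.

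I expect the main obstacle to be the first step: rigorously controlling how the active-node set $I^a_t$ evolves near $t_0$, in particular verifying $I^a_t \subseteq I^a_{t_0}$ on a short random interval. One must combine continuity of $\bf{X}$ (to handle $I_t$) with the graph-theoretic definition of activeness, and be careful that the window $\delta$ can be chosen strictly positive on the event under consideration — this likely requires a stopping-time argument and a measurable selection of the eigenvector $v$, since $I^a_{t_0}$ is itself random. A secondary, more minor technical point is the clean statement ``$v \geq 0$, $v \neq 0$, $v_i a_{ii} > 0$ for some $i$, and $A \succeq 0$ imply $v^\top A v > 0$,'' which should be packaged as a one-line linear-algebra observation rather than rederived in place.
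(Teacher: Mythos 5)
Your overall strategy is the same as the paper's: stop at the first time the spectral radius of the active minor reaches one, test the system against a Perron--Frobenius left-eigenvector $v$, note that the reflection contribution to $Y_t=\sum_i v_iX^i_t$ is nonpositive because the eigenvalue is $\geq 1$, and contradict $Y\geq 0$. Your ``window'' step ($I^a_t\subseteq I^a_{t_0}$ for a short random time) is handled in the paper by the stopping time $\varrho=\inf\{t\geq\tau_\ast \define X^i_t=0 \text{ for some } i\notin I_{\tau_\ast}\}\land\tau$, which is essentially what you describe, and the measurability/selection issues you flag are not the real difficulty.

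The genuine gap is your claimed linear-algebra shortcut: that $v\geq 0$, $v_{i^\star}a_{i^\star i^\star}>0$ and $A\succeq 0$ force $v^{\top}A[I^a]v>0$. This is false. Take $A=\begin{pmatrix}1 & -1\\ -1 & 1\end{pmatrix}$ (the covariance of $(W,-W)$) and $v=(1,1)$; then $v_1a_{11}=1>0$ but $v^{\top}Av=0$. Positive semidefiniteness gives $v^{\top}Av=0\Rightarrow Av=0$, and since off-diagonal covariances may be negative this does not contradict $a_{i^\star i^\star}v_{i^\star}>0$ (your own parenthetical already runs into the ``sign issues''). Nor is this a pathological corner: it is exactly the critical two-particle example with perfectly anticorrelated Brownian motions in Remark~\ref{rem:regimes}, where $\lambda=1$, the martingale $M$ vanishes identically, and your argument produces no contradiction at all. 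The paper closes this case with an extra step that your proposal is missing: when $v^{\top}A[I^a_\ast]v=0$, the two-sided bound $0\leq\sum_i v_iX^i_t\leq\sum_i v_i(W^i_t-W^i_{\tau_\ast})=0$ pins $X^{i^\star}$ at zero on the nontrivial interval $[\tau_\ast,\varrho]$, where $i^\star$ is the node from Lemma~\ref{eq:pos_eig_entry} with $v_{i^\star}a_{i^\star i^\star}>0$; but then $0=X^{i^\star}_t=(W^{i^\star}_t-W^{i^\star}_{\tau_\ast})+(\text{finite variation})$ equates a continuous martingale with strictly positive quadratic variation to a finite-variation process, which is impossible. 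Without this (or an equivalent) argument the lemma is unproved precisely in the degenerate-covariance regime that the active-node structure was introduced to handle. A secondary, fixable point: in your $\lambda>1$ case you assert that $L^{i^\star}$ strictly increases immediately after $t_0$; this is true (the driver of its Skorokhod problem is a nondegenerate Brownian motion minus a nondecreasing process, so it attains new infima immediately), but it needs a sentence of justification rather than being taken for granted.
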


\begin{proof}
Assume the statement of the lemma is false, so that defining the stopping time ${\tau_{\ast} = \inf\{t \in [0, \tau] \define \rho(Q_t) \geq 1\}}$ we have $\tau_{\ast} < \tau$ with positive probability. Let $(\bf{X}, \bf{L})$ be the solution to the Skorokhod problem associated with $(\bf{L}, \tau)$ and recall the notation ${I_t = \{i \in \bf{N} \define X^i_t = 0\}}$. Let us define the random index set $I_{\ast} = I_{\tau_{\ast}}$ and the corresponding set $I^a_{\ast}$ of active nodes, which are both $\F^N_{\tau_{\ast}}$-measurable random variables. Since $\rho(Q[I^a_{\ast}]) = \rho(Q_{\tau_{\ast}}) \geq 1$, there exists $v \in \cal{E}(I^a_{\ast})$. Now, we let $\varrho = \inf\{t \in [\tau_{\ast}, \tau] \define X^i_t = 0 \textup{ for some }i \notin I\} \land \tau$ on $\{\tau_{\ast} < \tau\}$ and $\varrho = \tau$ otherwise. Then, multiplying SDE \eqref{eq:ps} by the entries $v_i$ and summing over $i \in I^a_{\ast}$, we have for all $t \in [\tau_{\ast}, \varrho]$ that
\begin{align} \label{eq:var_after_stop}
    \sum_{i \in I^a_{\ast}} v_i X^i_t &= \sum_{i \in I^a_{\ast}} v_i(X^i_t - X^i_{\tau_{\ast}}) \notag \\
    &= \sum_{i \in I^a_{\ast}} v_i(W^i_t - W^i_{\tau_{\ast}}) + \sum_{i \in I^a_{\ast}} v_i\biggl(-\sum_{j = 1}^N q_{ij} (L^j_t - L^j_{\tau_{\ast}}) + (L^i_t - L^i_{\tau_{\ast}})\biggr) \notag \\
    &= \sum_{i \in I^a_{\ast}} v_i (W^i_t - W^i_{\tau_{\ast}}) + \sum_{i \in I^a_{\ast}} \biggl(v_i - \sum_{j \in I^a_{\ast}} v_j q_{ji}\biggr) (L^i_t - L^i_{\tau_{\ast}}),
\end{align}
where we used in the last equality that for $t \in [\tau_{\ast}, \varrho)$, we have $I^a_t \subset I^a_{\ast}$, so that $L^j_t - L^j_{\tau_{\ast}} = \bf{1}_{\{j \in I^a\}} L^j_t - L^j_{\tau_{\ast}}$ by \ref{it:minimality_property}. Since $v$ is a left-eigenvector of $Q[I^a_{\ast}]$ corresponding to the eigenvalue $\rho(Q[I^a_{\ast}]) \geq 1$ and $v_i \geq 0$, we have $v_i - \sum_{j \in I} v_j q_{ji} = v_i - \rho(Q[I^a_{\ast}]) v_i \leq 0$. Consequently $\sum_{i \in I^a_{\ast}} v_i X^i_t \leq \sum_{i \in I^a_{\ast}} v_i (W^i_t - W^i_{\tau_{\ast}})$. The quadratic variation of the martingale $M_t = \sum_{i \in I} v_i (W^i_{\tau_{\ast} + t} - W^i_{\tau_{\ast}})$ is given by $(v^{\top} A[I^a_{\ast}] v) t$. If $v^{\top} A[I^a_{\ast}] v$ is positive, then the crossing property of Brownian motion (see \cite[Proposition 2.14]{le_gall_brownian_2016}) implies $\inf_{t \in [\tau_{\ast}, \varrho]} \sum_{i \in I} v_i X^i_t < 0$ as soon as $\varrho > \tau_{\ast}$, which holds on $\{\tau_{\ast} < \tau\}$. This contradicts the nonnegativity of $(\bf{L}, \tau)$, so we must have $\tau_{\ast} \geq \tau$ almost surely. If $v^{\top} A[I^a_{\ast}] v$ vanishes, then by \eqref{eq:var_after_stop}, we have $0 \leq \sum_{i \in I^a_{\ast}} v_i X^i_t \leq 0$, so that $\sum_{i \in I^a_{\ast}} v_i X^i_t = 0$. Now, by Lemma \ref{eq:pos_eig_entry}, there exists $i \in I^a_{\ast}$ with $v_i a_{ii} > 0$, so that
\begin{equation*}
    0 = X^i_t = W^i_t - W^i_{\tau_{\ast}} + \sum_{j = 1}^N (\delta_{ij} - q_{ij})(L^j_t - L^j_{\tau_{\ast}})
\end{equation*}
for $t \in [\tau_{\ast}, \varrho]$. But $W^i_t - W^i_{\tau_{\ast}}$ is a continuous martingale with quadratic variation $a_{ii} > 0$, while $\sum_{j = 1}^N (\delta_{ij} - q_{ij})(L^j_t - L^j_{\tau_{\ast}})$ has finite variation, so the above equality cannot hold on any nontrivial time interval. Since $\varrho > \tau_{\ast}$ on $\{\tau_{\ast}, \tau\}$, we again deduce that $\tau_{\ast} \geq \tau$. The latter means that $\rho(Q_t) < 1$ for $t \in [0, \tau)$ as required.
\end{proof}

Next, we construct a solution to SDE \eqref{eq:ps} for a small time horizon under the assumption that $\rho(Q[I^a_0]) < 1$. Recall that $I^a_t$ denotes the set of active nodes at time $t \in [0, \tau[$.

\begin{proposition} \label{prop:ex_short_time}
There exists a solution $(\bf{L}, \tau)$ to SDE \eqref{eq:ps} such that $\tau > 0$ on the event $\{\rho(Q[I^a_0]) < 1\}$.
\end{proposition}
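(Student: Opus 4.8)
The plan is to localise to the $\bb{F}^N_0$-measurable event $A := \{\rho(Q[I^a_0]) < 1\}$, build an explicit solution of SDE~\eqref{eq:ps} there, and take $\bf{L} \equiv 0$, $\tau := 0$ on $A^c$. On $A$, write $J := I^a_0$ and $K := I_0 \setminus J$. Because $Q[J] \geq 0$ and $\rho(Q[J]) < 1$, the matrix $\bb{I} - Q[J]$ is a nonsingular M-matrix, so by Harrison \& Reiman \cite{harrison_reflected_1981} the associated oblique Skorokhod map $\Gamma$ on $C([0, \infty); \R^J)$ is well-defined and Lipschitz; this uses only the deterministic Skorokhod map, so the possible degeneracy of $A$ is irrelevant and causality of $\Gamma$ gives $\bb{F}^N$-adaptedness. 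Let $(\bf{Y}, \bf{M}) := \Gamma\bigl((W^i)_{i \in J}\bigr)$ — the driving path starts at $\bf{0}$, consistently with $\xi_i = 0$ for $i \in J \subseteq I_0$ — and set $X^i := Y^i$, $L^i := M^i$ for $i \in J$, while $L^i \equiv 0$ for $i \notin J$ and $X^i := \xi_i + W^i - \sum_j q_{ij} L^j + L^i$ for $i \notin J$. Finally, let $\tau := \inf\{t > 0 \define X^i_t = 0 \text{ for some } i \notin I_0\}$, with $\tau := \infty$ if $I_0 = \bf{N}$. Since $X^i_0 = \xi_i > 0$ for $i \notin I_0$ and all paths are continuous, $\tau$ is an $\bb{F}^N$-stopping time with $\tau > 0$ on $A$.

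Consistency rests on two graph-theoretic observations. First, an $i \in I_0$ with $a_{ii} > 0$ is active via the trivial path $j = i$; hence every $i \in K$ has $a_{ii} = 0$, so $W^i \equiv 0$, and $i$ receives no edge from $J$, i.e.\ $q_{ij} = 0$ for all $j \in J$ (an edge $j \to i$ with $j$ active would render $i$ active). Thus: for $i \in K$ the process $\xi_i + W^i - \sum_j q_{ij} L^j$ vanishes on $[0, \tau)$, so $X^i \equiv 0$ and $L^i \equiv 0$ solves its one-dimensional Skorokhod problem; for $i \notin I_0$ this process equals $X^i$, which stays positive on $[0, \tau)$, so again $L^i \equiv 0$ is correct; and for $i \in J$ the process equals $W^i - \sum_{j \in J} q_{ij} M^j$, for which $(Y^i, M^i)$ solves the one-dimensional Skorokhod problem by the defining properties of $\Gamma$, so uniqueness of the one-dimensional Skorokhod map forces the reflection there to be $L^i = M^i$. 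This establishes property \ref{it:solution_property} of Definition~\ref{def:sol_ps} on $[0, \tau)$.

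The crux is the minimality property \ref{it:minimality_property}, that $\d L^i$ is carried by $\{s \define i \in I^a_s\}$. For $i \notin J$ it is vacuous, and for $i \in J$ with $a_{ii} > 0$ it is immediate since $\d L^i$ lives on $\{X^i = 0\} \subseteq \{s \define i \in I_s\} \subseteq \{s \define i \in I^a_s\}$. The remaining case, $i \in J$ with $a_{ii} = 0$, is exactly where $\rho(Q[I^a_0]) < 1$ enters. Fix $s \in \supp(\d L^i)$, so $X^i_s = 0$ and $i \in I_s$. From $W^i \equiv 0$ we get $L^i = \sum_j q_{ij} L^j + X^i$; if every $L^j$ with $q_{ij} > 0$ were flat near $s$, then on a one-sided neighbourhood $X^i$ would solve a one-dimensional Skorokhod problem with constant nonnegative driving, forcing $L^i$ to be flat near $s$ — impossible. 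Hence some $j$ with $q_{ij} > 0$ (an edge $j \to i$) has $s \in \supp(\d L^j)$, whence $X^j_s = 0$. Iterating, set $S := \{j \in J \define s \in \supp(\d L^j)\}$, so $S \subseteq I_s$, and let $S_i \subseteq S$ consist of those $j$ from which $i$ is reachable in $(S, E_S)$; then $S_i$ receives no edge from $S \setminus S_i$, and every zero-diagonal node of $S_i$ has an incoming edge from within $S_i$. If $S_i$ contained no node with positive diagonal, then on a neighbourhood of $s$ the sub-collection $\bf{X}^{S_i} = (X^j)_{j \in S_i}$ would satisfy $\bf{X}^{S_i} = (\text{const}) + (\bb{I} - Q[S_i]) \bf{L}^{S_i}$ — a reduced reflection problem with $\rho(Q[S_i]) \leq \rho(Q[J]) < 1$ and constant nonnegative driving — so uniqueness of the Skorokhod map (which fixes such paths with zero reflection) would make $\bf{L}^{S_i}$ constant near $s$, contradicting $s \in \supp(\d L^i)$. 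Therefore some $p \in S_i$ has $a_{pp} > 0$; since $p$ reaches $i$ in $(S, E_S) \subseteq (I_s, E_{I_s})$ and $p \in I_s$, we get $i \in I^a_s$. This proves \ref{it:minimality_property}, hence the proposition.

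I expect the support bookkeeping in the last paragraph — passing from ``$\d L^i$ charges $s$'' to ``some predecessor reflection charges $s$'' via the local-constancy argument, and checking that the sub-collection indexed by $S_i$ really is a reduced reflection problem on a time-neighbourhood of $s$ — to be the main obstacle. The remaining points (measurable dependence on $I_0$ and hence $\bb{F}^N$-adaptedness, continuity of $\bf{L}$, the stopping-time property, and the one-dimensional Skorokhod identifications) are routine.
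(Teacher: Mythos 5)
Your construction is correct, and its skeleton is the same as the paper's: restrict to the initially active set $J = I^a_0$, solve the reduced obliquely reflected problem there using $\rho(Q[J]) < 1$, set $L^i \equiv 0$ for all other nodes, and stop at the first time a particle outside $I_0$ reaches zero, using the graph fact that no edge runs from an active to an inactive node to check Definition~\ref{def:sol_ps}~\ref{it:solution_property} for the nodes of $I_0 \setminus J$. The differences are in the two supporting steps. First, where you invoke the Harrison--Reiman Skorokhod map \cite{harrison_reflected_1981} for $\mathbb{I} - Q[J]$, the paper runs its own contraction in a weighted supremum norm built from a Perron--Frobenius left-eigenvector of $(q_{ij}+\epsilon)_{i,j\in J}$; the two are equivalent in content, but note that Harrison--Reiman's setting has zero diagonal, so if some $q_{ii}>0$ you need the small normalisation $(1-q_{ii})L^i$ (legitimate since $\rho(Q[J])<1$ forces $q_{ii}<1$) or the paper's eigenvector-weighted contraction, which handles nonzero diagonals directly. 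Second, and more substantively, you give a genuine proof of the minimality property \ref{it:minimality_property} --- the support bookkeeping showing that if $s \in \operatorname{supp}(\mathrm{d}L^i)$ then the set $S_i$ of support-charging predecessors must contain a node with positive diagonal, via the local-constancy of the reduced reflection problem with constant nonnegative driving and $\rho(Q[S_i]) \leq \rho(Q[J]) < 1$ --- whereas the paper simply asserts that \ref{it:minimality_property} ``holds by construction.'' Your argument for this step checks out (the intermediate claim that every zero-diagonal node of $S_i$ has an in-edge from $S_i$ is not even needed; the reduced-problem contradiction alone suffices, and it also absorbs any self-loops $q_{mm}>0$ that would stall a node-by-node chain), and it is needed: membership of $i$ in $I^a_0 \cap I_s$ does not by itself give $i \in I^a_s$, since intermediate nodes of the activating path may have left the origin, so the fact that the reflections charging $s$ themselves sit at zero is exactly the point. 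In short: correct, same overall route, with existence outsourced to \cite{harrison_reflected_1981} and a welcome extra layer of rigour on the minimality condition.
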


\begin{proof}
Without loss of generality, we assume that $\rho(Q[I^a_0]) < 1$ almost surely, so we have to show there exists a solution $(\bf{L}, \tau)$ such that $\tau > 0$ almost surely. Fix $T > 0$ and let $\cal{I}^N_c[0, T]$ be the space of nondecreasing continuous paths $f=(f^1,\dots,f^N) \define [0, T] \to [0, \infty)^N$ started from zero. For $w \in (0, \infty)^N$, we define the metric $d_w$ on $\cal{I}^N_c[0, T]$ by $d_w(f, g) = \sum_{i = 1}^N w_i\lVert f^i - g^i\rVert_{\infty}$. Next, for $x \in C([0, T]; \R^N)$ with $x_0 \geq 0$ and a nonempty set $I \subset \bf{N}$ such that $\rho(Q[I^a]) < 1$ for the corresponding set $I^a$ of active nodes, let us define the map $\Lambda \define \cal{I}^N_c[0, T] \to \cal{I}^N_c[0, T]$ by
\begin{equation*}
    \Lambda^i_t(f) = \sup_{0 \leq s \leq t} \biggl(x^i_s - \sum_{j \in I^a} q_{ij} f^j_s\biggr)_-
\end{equation*}
if $i \in I^a$ and $\Lambda^i_t(f) = 0$ otherwise. If we want to emphasise the dependence of $\Lambda$ on $x$ and $I$, we write $\Lambda(\cdot; x, I)$. Suppose that for any $x$ and $I$ as above, we can construct a fixed point $f = f(x, I)$ of the map $\Lambda(\cdot; x, I)$ such that $f$ depends on $x$ and $I$ in a measurable way, where $\cal{I}^N_c[0, T]$ is equipped with the Borel $\sigma$-algebra induced by any of the equivalent metrics $d_w$ for $w \in (0, \infty)^N$. Set $\bf{L} = f(\xi + W, I_0)$ and define $X = \xi + W + (I - Q)\bf{L}$ as well as
\begin{equation*}
    \tau = \inf\Bigl\{t \in [0, T] \define X^i_t \leq 0 \text{ for some } i \notin I_0\Bigr\} \land T.
\end{equation*}
Then, $\tau > 0$ by continuity of $W$ and $\bf{L}$ and we claim that $(\bf{L}, \tau)$ solves SDE \eqref{eq:ps}. Indeed, since $L^j = 0$ for all $j \notin I^a$, we have $\sum_{j \in I^a} q_{ij} L^j_t = \sum_{j = 1}^N q_{ij} L^j_t$ for any active node $i \in I^a$, which shows
\begin{equation*}
    L^i_t = \sup_{0 \leq s \leq t}\biggl(\xi_i + W^i_s - \sum_{j = 1}^N q_{ij} L^j_s\biggr)_,
\end{equation*}
so Definition \ref{def:sol_ps} \ref{it:solution_property} is satisfied for all active nodes $i \in I^a$. Next, since there cannot be an edge from an active node to an inactive node, meaning $q_{ij} = 0$ for all $j \in I^a$ and $i \notin I^a$, and $L^j_t = 0$ for all $j \notin I^a$, we get for inactive nodes $i \notin I^a$ that $0 = \sum_{j \in I^a} q_{ij} L^j_t = \sum_{j = 1}^N q_{ij} L^j_t$. Consequently,
\begin{equation*}
    L^i_t = 0 = \sup_{0 \leq s \leq t}(\xi_i + W^i_s)_- = \sup_{0 \leq s \leq t}\biggl(\xi_i + W^i_s - \sum_{j = 1}^N q_{ij} L^j_s\biggr)_-
\end{equation*}
for $i \notin I^a$, where we used that $a_{ii} = 0$, so that $\xi_i + W^i_s = \xi_i \geq 0$. Definition \ref{def:sol_ps} \ref{it:minimality_property} holds by construction. Thus, to complete the proof of the lemma, all that remains is to exhibit the desired fixed point $f = f(x, I)$.

Since $\rho(Q[I^a]) < 1$, we can find $\epsilon > 0$ small enough such that the largest left-eigenvalue $\lambda$ of the matrix $(q_{ij} + \epsilon)_{i, j \in I^a}$ lies in $(0, 1)$. Moreover, by the Perron--Frobenius theorem, the latter matrix possesses a left-eigenvector $\tilde{w} \in (0, \infty)^{I^a}$ corresponding to the eigenvalue $\lambda$. In particular, we have $\tilde{w}^{\top} Q[I^a] \leq \lambda \tilde{w}^{\top}$, where the inequality is understood in a component-wise sense. Now, we define $w \in \R^N$ by $w_i = \tilde{w}_i$ if $i \in I^a$ and $w_i = 1$ otherwise. Then, for any $f^1$, $f^2 \in \cal{I}^N_c[0, \infty)$ and $i \in I^a$, it holds that
\begin{equation*}
    \sup_{0 \leq t \leq T} \bigl\lvert\Lambda^i_t(f^1) - \Lambda^i_t(f^2)\bigr\rvert \leq \sup_{0 \leq t \leq T} \biggl\lvert\sum_{j \in I^a} q_{ij} \bigl(f^{1, j}_t - f^{2, j}_t\bigr)\biggr\rvert \leq \sum_{j \in I^a} \Bigl(q_{ij} \sup_{0 \leq t \leq T} \bigl\lvert f^{1, j}_t - f^{2, j}_t\bigr\rvert\Bigr),
\end{equation*}
where we use that $Q$ has nonnegative entries. This allows us to estimate
\begin{align*}
     d_w\bigl(\Lambda(f^1), \Lambda(f^2)\bigr) &= \sum_{i \in I^a} \tilde{w}_i \bigl\lVert\Lambda^i(f^1) - \Lambda^i(f^2)\bigr\rVert_{\infty} \\
     &\leq \sum_{j \in I^a} \biggl(\biggl(\sum_{i \in I^a} \tilde{w}_i q_{ij}\biggr) \sup_{0 \leq t \leq T} \bigl\lvert f^{1, j}_t - f^{2, j}_t\bigr\rvert\biggr) \\
     &\leq \lambda \sum_{j \in I^a} \biggl(\tilde{w}_j\sup_{0 \leq t \leq T} \bigl\lvert f^{1, j}_t - f^{2, j}_t\bigr\rvert\biggr) \\
     &\leq \lambda d_w(f^1, f^2).
\end{align*}
Since $\lambda < 1$, the map $\Lambda$ is a contraction on $\cal{I}^N_c[0, T]$ with respect to the metric $d_w$, so the desired fixed point exists. The measurable dependence on $x$ and $I$ is clear, so the proof is complete.
\end{proof}

The last step before we can finish the proof of Theorem \ref{thm:exist_unique_ps} is to establish uniqueness for SDE \eqref{eq:ps}.

\begin{lemma} \label{lem:unique_ps}
For any two solutions $(\bf{L}, \tau)$ and $(\bf{L}', \tau')$ of SDE \eqref{eq:ps} it holds that $\bf{L}_t = \bf{L}'_t$ on $[0, \tau' \land \tau)$.
\end{lemma}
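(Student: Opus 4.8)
The plan is to argue by contradiction and localise to a short interval after the first time the two solutions separate. Let $(\bf{X}, \bf{L})$ and $(\bf{X}', \bf{L}')$ be the associated Skorokhod solutions, and set $\sigma = \inf\{t \geq 0 \define \bf{L}_t \neq \bf{L}'_t\} \wedge \tau \wedge \tau'$. Suppose for contradiction that $\pr(\sigma < \tau \wedge \tau') > 0$. On this event, $\bf{X}_\sigma = \bf{X}'_\sigma$ by continuity, so the two systems share the same active-node set $I^a := I^a_\sigma = (I')^a_\sigma$ and, by Lemma \ref{lem:blow_up_time_max}, $\rho(Q[I^a]) < 1$ (since $\sigma < \tau$ on this event). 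The first step is to show that for $t$ slightly beyond $\sigma$, both $\bf{L}$ and $\bf{L}'$ can only grow in coordinates belonging to $I^a$: indeed, for $i \notin I_\sigma$, $X^i_\sigma > 0$, so by continuity $X^i$ and $(X')^i$ stay positive on $[\sigma, \sigma + \delta)$ and the corresponding reflection terms are frozen there; for $i \in I_\sigma \setminus I^a$, the minimality condition \ref{it:minimality_property} forces $L^i$ and $(L')^i$ to stay frozen as long as $i$ remains inactive, and one checks (as in the proof of Proposition \ref{prop:ex_short_time}, using that there is no edge from an active to an inactive node and that $a_{ii} = 0$ for such $i$) that $i$ cannot become active on a short interval. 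Hence on $[\sigma, \sigma + \delta)$ both reflection processes are governed only by the active coordinates.

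The second step is a contraction estimate on this short interval, essentially re-running the fixed-point argument from Proposition \ref{prop:ex_short_time}. Since $\rho(Q[I^a]) < 1$, pick $\epsilon > 0$ with the largest left-eigenvalue $\lambda$ of $(q_{ij} + \epsilon)_{i,j \in I^a}$ in $(0,1)$ and the corresponding Perron--Frobenius left-eigenvector $\tilde{w} \in (0,\infty)^{I^a}$. For $i \in I^a$ and $t \in [\sigma, \sigma + \delta)$, both $L^i_t$ and $(L')^i_t$ are the Skorokhod reflection (sup of negative part) of $\xi_i + W^i - \sum_{j \in I^a} q_{ij} L^j$, respectively with $\bf{L}'$, started from the common value at $\sigma$. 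Using the $1$-Lipschitz property of the Skorokhod map and the nonnegativity of $Q$, exactly as in Proposition \ref{prop:ex_short_time} one gets
\begin{equation*}
    \sum_{i \in I^a} \tilde{w}_i \sup_{\sigma \leq s \leq t} \bigl\lvert L^i_s - (L')^i_s \bigr\rvert \leq \lambda \sum_{j \in I^a} \tilde{w}_j \sup_{\sigma \leq s \leq t} \bigl\lvert L^j_s - (L')^j_s \bigr\rvert,
\end{equation*}
and since $\lambda < 1$ this forces $\bf{L}_s = \bf{L}'_s$ on $[\sigma, \sigma + \delta)$. Combined with the inactive coordinates being frozen, this contradicts the definition of $\sigma$ as the first separation time, so $\pr(\sigma < \tau \wedge \tau') = 0$, i.e.\ $\bf{L}_t = \bf{L}'_t$ on $[0, \tau \wedge \tau')$.

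The main obstacle I anticipate is the bookkeeping at the separation time $\sigma$: one must be careful that $\sigma$ is a genuine stopping time, that the active-node sets of the two solutions really coincide in a neighbourhood of $\sigma$ (handling the inactive-but-at-zero nodes via \ref{it:minimality_property} and the degeneracy $a_{ii} = 0$ as in Proposition \ref{prop:ex_short_time}), and that the interval $[\sigma, \sigma + \delta)$ can be chosen (possibly random) so that no node outside $I^a$ hits zero or becomes active — otherwise the contraction constant changes. Once the correct short interval and the correct active set are pinned down, the contraction estimate itself is a routine repetition of the computation already carried out in the proof of Proposition \ref{prop:ex_short_time}.
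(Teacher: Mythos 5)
Your proposal is correct and follows essentially the same route as the paper: contradiction at the first separation time, Lemma \ref{lem:blow_up_time_max} to ensure $\rho(Q[I^a]) < 1$ there, and then the contraction argument of Proposition \ref{prop:ex_short_time} on a short interval after that time (the paper states this more tersely, simply invoking the uniqueness of fixed points of the contraction). Your additional bookkeeping about frozen inactive coordinates and the inclusion of the active sets shortly after the separation time is exactly the detail the paper leaves implicit.
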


\begin{proof}
Set $\tau_{\ast} = \inf\{t \in [0, \tau' \land \tau) \define \bf{L}_t \neq \bf{L}'_t\} \land (\tau \land \tau')$. If $\tau_{\ast} = \tau \land \tau'$, we are done, so assume that $\tau_{\ast} < \tau \land \tau'$ with positive probability. By Lemma \ref{lem:blow_up_time_max}, on $\{\tau_{\ast} < \tau \land \tau'\}$ it holds that $\rho(Q[I_{\ast}])$ and $\rho(Q[I'_{\ast}])$ are smaller than $1$, where $I_{\ast} = I_{\tau_{\ast}}$ and $I'_{\ast} = I'_{\tau_{\ast}}$. Hence, proceeding as in the proof of Proposition \ref{prop:ex_short_time}, we can show that on $\{\tau_{\ast} < \tau \land \tau'\}$ for a small interval $[\tau_{\ast}, \varrho)$ with $\varrho \in (\tau_{\ast}, \tau \land \tau')$, the solutions $(\bf{L}, \tau)$ and $(\bf{L}', \tau')$, when restricted to this interval $[\tau_{\ast}, \varrho)$, arise as fixed points of a suitably defined contraction mapping. Since the fixed point of a contraction is unique, it must follow that $(\bf{L}, \tau)$ and $(\bf{L}', \tau')$ also agree on $[\tau_{\ast}, \varrho)$ in contradiction to the definition of $\tau_{\ast}$.
\end{proof}

We can now complete the proof of Theorem \ref{thm:exist_unique_ps}.

\begin{proof}[Proof of Theorem \ref{thm:exist_unique_ps}]
First, let us obtain the maximal solution. Let $\cal{S}^N$ denote the set of solutions to SDE \eqref{eq:ps} and set $\tau_N = \esssup\{\tau \define (\bf{L}, \tau) \in \cal{S}^N\}$. We will construct a solution $(\bf{L}, \tau_N)$ to SDE \eqref{eq:ps}. Clearly, this must be the maximal solution. Its uniqueness follows from Lemma \ref{lem:unique_ps}. Fix a sequence of solutions $(\bf{L}^n, \tau^n) \in \cal{S}^N$ with $\lim_{n \to \infty} \tau^n = \tau_N$. By Lemma \ref{lem:unique_ps}, we have that $\bf{L}^n$ and $\bf{L}^m$ coincide on $[0, \tau^n \land \tau^m)$ for all $n$, $m \geq 0$. Hence, setting $\bf{L}_t = \bf{L}^n_t$ for $t \in [0, \tau_n)$ yields a stochastic process $(\bf{L}_t)_{t \in [0, \tau_N)}$ with continuous trajectories. It is easy to see that $(\bf{L}, \tau_N)$ solves SDE \eqref{eq:ps}, so it is the desired unique maximal solution.

From Lemma \ref{lem:blow_up_time_max}, it follows that $\rho(Q_t) < 1$ for $t \in [0, \tau_N)$. Next, if it were the case that $\pr(\rho(Q_{\tau_N}) < 1,\, \tau_N < \infty) > 0$, then proceeding as in Proposition \ref{prop:ex_short_time}, we could extend $(\bf{L}, \tau_N)$ on the set $\{\rho(Q_{\tau_N}) < 1,\, \tau_N < \infty\}$ in contradiction to the maximality of $\tau_N$. Thus, we conclude $\rho(Q_{\tau_N}) \geq 1$ on $\{\tau_N < \infty\}$, so we have established Property \ref{it:maximality_cond_ps}. The if statement of Property \ref{it:positive_blow_up_ps} follows from Proposition \ref{prop:ex_short_time} and the only if statement is an immediate consequence of Lemma \ref{lem:blow_up_time_max}. It remains to prove Property \ref{it:finite_blow_up_ps}. We begin with the if part. Consider the set of nodes $\bf{N}^a$ and let $v \in \cal{E}(\bf{N}^a)$. Then, similarly to \eqref{eq:var_after_stop}, we get
\begin{equation} \label{eq:all_active_balancing}
    \sum_{i \in \bf{N}^a} v_i X^i_t = \sum_{i \in \bf{N}^a} v_i \xi_i + \sum_{i \in \bf{N}^a} v_i W^i_t + \sum_{i \in \bf{N}^a} \bigl(1 - \rho(Q[\bf{N}^a])\bigr)v_i L^i_t
\end{equation}
for $t \in [0, \tau_N)$. Now, on $\{\tau_N = \infty\}$, it holds that $\lim_{t \to \infty} L^i_t = \infty$ for all $i \in \bf{N}^a$. Indeed, this is clear for all $i \in \bf{N}$ with $a_{ii} > 0$ by Lemma \ref{lem:lower_bound}. For active nodes $i \in \bf{N}^a$ for which $a_{ii} = 0$, it follows from a simple induction on the length of the shortest path in $(\bf{N}^a, E_{\bf{N}^a})$ from a node $j \in \bf{N}^a$ with $a_{jj} > 0$ to $i$. We omit the details here. Consequently, if $\rho(Q[\bf{N}^a]) > 1$, so that $1 - \rho(Q[\bf{N}^a]) < 0$, then $\sum_{i \in \bf{N}^a}^N (1 - \rho(Q))v_i L^i_t \to - \infty$ as $t \to \infty$ on $\{\tau_N = \infty\}$, since $v_i > 0$ for at least one of the nodes $i \in \bf{N}^a$. In view of \eqref{eq:all_active_balancing}, this implies $\liminf_{t \to \infty} \sum_{i \in \bf{N}^a}^N v_i X^i_t = -\infty$ on $\{\tau_N = \infty\}$ in contradiction to the nonnegativity of $X^i_t$. Next, suppose that $\rho(Q[\bf{N}^a]) = 1$ and $v^{\top} A[\bf{N}^a] v > 0$ for some $v \in \cal{E}(\bf{N}^a)$. Then, from \eqref{eq:all_active_balancing}, we get $\sum_{i \in \bf{N}^a}^N v_i X^i_t = \sum_{i \in \bf{N}^a}^N v_i \xi_i + \sigma B_t$ for a Brownian motion $B$ and $\sigma^2 = v^{\top} A[\bf{N}^a] v > 0$. But on $\{\tau_N = \infty\}$, the Brownian motion will cross below $-\sum_{i \in \bf{N}^a}^N v_i \xi_i - 1$ at some time $\varrho$, so that we again find $\sum_{i \in \bf{N}^a}^N v_i X^i_{\varrho} < 0$ in contradiction to the nonnegativity of $X^i_{\varrho}$. Lastly, let us treat the case $\rho(Q[\bf{N}^a]) = 1$ and $\xi_i = 0$ for $i \in \bf{N}^a$. Then $Q_0 = Q[\bf{N}^a]$, so in view of \ref{it:maximality_cond_ps} it follows that $\tau_N = 0 < \infty$.

Finally, let us show that $\tau_N < \infty$ implies Proper \ref{it:finite_blow_up_ps} (a), (b), or (c).
% \ref{it:larger_than_1}, \ref{it:nondegenerate_vol}, or \ref{it:starting_from_zero}. 
We first establish that $\rho(Q[\bf{N}^a]) \geq 1$. Otherwise, if $\rho(Q[\bf{N}^a]) < 1$, then a straightforward fixed-point argument as that in the proof of Proposition \ref{prop:ex_short_time} allows us to construct a global solution, meaning that $\tau_N = \infty$. Hence, $\tau_N < \infty$ must imply that $\rho(Q[\bf{N}^a]) \geq 1$. If $\rho(Q[\bf{N}^a]) > 1$, we are done, so let us assume $\rho(Q[\bf{N}^a]) = 1$. Then, we have to show that \ref{it:finite_blow_up_ps} (b) or (c)
% \ref{it:nondegenerate_vol} or \ref{it:starting_from_zero} 
hold on $\{\tau_N < \infty\}$. If \ref{it:finite_blow_up_ps} (b)
% \ref{it:nondegenerate_vol} 
is satisfied, then we are again done, so let us suppose that for any $I \subset \bf{N}$ with $\rho(Q[I^a]) = 1$ and any $v \in \cal{E}(I^a)$, we have $v^{\top} A[I^a] v = 0$. Then as in \eqref{eq:all_active_balancing}, we have
\begin{equation*}
    \sum_{i \in I^a} v_i X^i_t = \sum_{i \in I^a} v_i \xi_i + \sum_{i \in I^a} v_i W^i_t + \sum_{i \in I^a} \bigl(1 - \rho(Q[I^a])\bigr)v_i L^i_t = \sum_{i \in I^a} v_i \xi_i,
\end{equation*}
since the process $\sum_{i \in I^a}^N v_i W^i_t$ vanishes and $1 - \rho(Q[I^a]) = 0$. But that means $X^i_t = 0$ for all $i \in I^a$ only if $\xi_i = 0$ for all $i \in I^a$. But on $\{\tau_N < \infty\}$, we have that $X^i_{\tau_N} = 0$ for all $i \in I^a_{\tau}$, so the above implies that $\xi_i = 0$ for all $i \in I^a_{\tau}$, meaning that $I^a_{\tau_N} \subset I^a_0$. From that we deduce that $\rho(Q_0) \geq \rho(Q_{\tau_N}) \geq 1$, so that $\tau_N = 0 < \infty$. This concludes the proof.
\end{proof}

The final result of this section is a comparison principle for solutions to SDE \eqref{eq:ps}. More precisely, we show monotonicity of the solution as a function of its initial condition and the interaction matrix $Q$. The proof is based on an application of Tarski's fixed-point theorem. To that end, we introduce a suitable and somewhat unusual lattice structure on the set of solutions.

\begin{proposition} \label{prop:comparison_ps}
Let $\xi_{1, i}$ and $\xi_{2, i}$, $i \in [N]$, be nonnegative random variables such that $\xi_{1, i} \leq \xi_{2, i}$ for $i \in [N]$ and let $Q^{(1)} = (q^{(1)}_{ij})_{ij}$ and $Q^{(2)} = (q^{(2)}_{ij})_{ij} \in \R^{N \times N}$ be two matrices such that $q^{(1)}_{ij} \geq q^{(2)}_{ij} \geq 0$ for $i$, $j \in [N]$. For $k = 1$, $2$, let $(\bf{X}^k, \bf{L}^k)$ be the solution of the Skorokhod problem associated with the solution $(\bf{L}^k, \tau_k)$ to SDE \eqref{eq:ps} with initial condition $(\xi_{k, 1},\dots, \xi_{k, N})$ and interaction matrix $Q^{(k)}$. Then 
\begin{equation}
    X^{1, i}_t \leq X^{2, i}_t \quad \text{and} \quad L^{1, i}_t - L^{1, i}_s \geq L^{2, i}_t - L^{2, i}_s
\end{equation}
for all $0 \leq s \leq t < \tau_1 \land \tau_2$ and $i \in [N]$.
\end{proposition}

\begin{proof}
We shall use Tarski's fixed-point theorem to prove this result. Let us define $\cal{S}$ to be the space of $N$-tuples $\bf{F} = (F^1, \dots, F^N)$ of nondecreasing continuous adapted stochastic processes on $[0, \tau_1 \land \tau_2)$ started from zero such that $F^i_t - F^i_s \leq L^{1, i}_t - L^{1, i}_s$ for $0 \leq s \leq t < \tau_1 \land \tau_2$ and $i \in [N]$. We endow this space with the partial order ``$\ll$'' given by $\bf{F}^1 \ll \bf{F}^2$ if a.s.\@ $F^{1, i}_t - F^{1, i}_s \leq F^{2, i}_t - F^{2, i}_s$ for all $0 \leq s \leq t < \tau_1 \land \tau_2$ and $i \in [N]$. By Lemma \ref{lem:lattice}, the space $\cal{S}$ forms a complete lattice under this partial order. Note that the restriction of $\bf{L}^1$ to the interval $[0, \tau_1 \land \tau_2)$ is a member of $\cal{S}$. For notational simplicity, in what follows, we shall not distinguish between $\bf{L}^1$ and its restriction to $[0, \tau_1 \land \tau_2)$.

Next, let us introduce the map $\Phi \define \cal{S} \to \cal{S}$ given by 
\begin{equation*}
    \Phi^i_t(\bf{F}) = \sup_{0 \leq s \leq t} \biggl(\xi_{2, i} + W^i_s - \sum_{j = 1}^N q^{(2)}_{ij} F^j_s\biggr)_-
\end{equation*}
for $t \in [0, \tau_1 \land \tau_2)$, $i \in [N]$, and $\bf{F} \in \cal{S}$. Let us show that $\Phi$ is well-defined in the sense that $\Phi(\bf{F})$ is a member of $\cal{S}$ for $\bf{F} \in \cal{S}$ and nondecreasing with respect to the partial order on $\cal{S}$ by which we mean that $\Phi(\bf{F}) \ll \Phi(\bf{F}')$ for $\bf{F}$, $\bf{F}' \in \cal{S}$ with $\bf{F} \ll \bf{F}'$. Note that $\Phi^i(\bf{F})$, $L^{1, i}$, and $\Phi^i(\bf{F}')$, $i \in [N]$, are simply the second components of the solutions to the Skorokhod problem for 
\begin{align*}
    Z^{1, i} = \xi_{2, i} + W^i - \sum_{j = 1}^N& q^{(2)}_{ij} F^j, \qquad Z^{2, i} = \xi_{1, i} + W^i - \sum_{j = 1}^N q^{(1)}_{ij} L^{1, j}, \\
    Z^{3, i} &= \xi_{2, i} + W^i - \sum_{j = 1}^N q^{(2)}_{ij} F'^j,
\end{align*}
respectively. Due to the assumptions on the initial conditions and the interaction matrices, we have that $Z^{1, i}_0 \leq Z^{2, i}_0 \land Z^{3, i}_0$ and $Z^{1, i}_t - Z^{1, i}_s \geq (Z^{2, i}_t - Z^{2, i}_s) \lor (Z^{2, i}_t - Z^{2, i}_s)$ for $0 \leq s \leq t < \tau_1 \land \tau_2$. Thus, it follows from the comparison result \eqref{eq:skorokhod_comparison} from Proposition \ref{prop:skorokhod_problem} for the Skorokhod problem that
\begin{equation*}
    \Phi^i_t(\bf{F}) - \Phi^i_s(\bf{F}) \leq \bigl(L^{1, i}_t - L^{1, i}_s\bigr) \lor \bigl(\Phi^i_t(\bf{F}') - \Phi^i_s(\bf{F}')\bigr)
\end{equation*}
for $0 \leq s \leq t < \tau_1 \land \tau_2$. Since this is true for all $i \in [N]$, we get that $\Phi(\bf{F}) \ll \bf{L}^1$ and $\Phi(\bf{F}) \ll \Phi(\bf{F}')$. Hence, $\Phi$ is well-defined and nondecreasing.

According to Tarski's fixed-point theorem \cite[Theorem 1]{tarski_1103044538}, the greatest fixed point (understood with respect to the partial order on $\cal{S}$) of the map $\Phi$ is given by the least upper bound $\bf{F}^{\ast}$ of the set $\cal{S}_{\ast} = \{\bf{F} \in \cal{S} \define \bf{F} \leq \Phi(\bf{F})\}$. But the restriction of $\bf{L}^2$ to $[0, \tau_1 \land \tau_2)$ is a fixed point of $\Phi$, so we get that
\begin{equation*}
    L^{2, i}_t - L^{2, i}_s \leq F^{\ast, i}_t - F^{\ast, i}_s \leq L^{1, i}_t - L^{1, i}_s
\end{equation*}
for all $t \in [0, \tau_1 \land \tau_2)$ and $i \in [N]$, where the second inequality comes from the fact that $\bf{L}^1$ is an upper bound of $\cal{S}$. Finally, to conclude that $X^{1, i}_t \leq X^{2, i}_t$ for $t \in [0, \tau_1 \land \tau_2)$ and $i \in [N]$, one can draw again on the comparison result from Proposition \ref{prop:skorokhod_problem}. We omit the details. 
\end{proof}

\section{The Mean-Field Limit} \label{sec:skorokhod_problem}

In this section, we prove existence and uniqueness for the mean-field limit (Theorem \ref{thm:exist_unique}), establish propagation of chaos for the particle system (Theorem \ref{thm:poc}), and discuss the long-term behaviour of the mean-field limit (Theorem \ref{thm:mfstationary}).

\subsection{Existence and Uniqueness for the Mean-Field Limit}

The goal of this section is to prove Theorem \ref{thm:exist_unique}, which establishes existence and uniqueness for McKean--Vlasov SDE \eqref{eq:non_linear_skorokhod} and characterises the breakdown behaviour of the maximal solution in the supercritical case $\alpha > 1$. The strategy resembles that used for the finite system in Section \ref{sec:ps}.

In the subcritical and the critical case the maximal solution does not break down (other than for $\pr(\xi = 0) = 1$ in the critical case), hence we will omit $T$ from the solution pair whenever convenient. Moreover, we will often omit the term maximal when referring to the unique maximal solution of McKean--Vlasov SDE \eqref{eq:non_linear_skorokhod} and simply say the unique solution.

We begin by proving that all solutions to McKean--Vlasov SDE \eqref{eq:non_linear_skorokhod} are continuous. For that, we need the following lemma, which gives an upper bound on the mass accumulated at zero.

\begin{lemma} \label{lem:alive_cond}
Let $(\ell, T)$ be a solution to McKean--Vlasov SDE \eqref{eq:non_linear_skorokhod}. Then $\pr(X_t = 0) < \frac{1}{\alpha}$ for all $t \in [0, T)$.
\end{lemma}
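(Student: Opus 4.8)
\textbf{Proof proposal for Lemma \ref{lem:alive_cond}.}

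The plan is to argue by contradiction, exploiting the same linear combination trick used for the finite system in Lemma \ref{lem:blow_up_time_max}, but now with the constant weight vector $v = \bf{1}$ and the interaction mediated by the law. Suppose there were $t_0 \in [0, T)$ with $\pr(X_{t_0} = 0) \geq 1/\alpha$. First I would fix the test event $\{X_{t_0} = 0\}$ and compute $\ev[\bf{1}_{\{X_{t_0} = 0\}} X_{t_0}] = 0$. Expanding the right-hand side of McKean--Vlasov SDE \eqref{eq:non_linear_skorokhod} at time $t_0$ gives
\begin{equation*}
    0 = \ev\bigl[\bf{1}_{\{X_{t_0} = 0\}}(\xi + W_{t_0})\bigr] - \alpha \pr(X_{t_0} = 0)\, \ell_{t_0} + \ev\bigl[\bf{1}_{\{X_{t_0} = 0\}} L_{t_0}\bigr].
\end{equation*}
Since $\ell_{t_0} = \ev L_{t_0}$ and $L$ is nondecreasing and nonnegative, $\ev[\bf{1}_{\{X_{t_0} = 0\}} L_{t_0}] \leq \ell_{t_0}$, so the last two terms together are bounded above by $(1 - \alpha \pr(X_{t_0} = 0))\ell_{t_0} \leq 0$ whenever $\pr(X_{t_0} = 0) \geq 1/\alpha$. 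This forces $\ev[\bf{1}_{\{X_{t_0} = 0\}}(\xi + W_{t_0})] \geq 0$, and in fact the strategy will be to compare time $t_0$ with a nearby earlier time and extract a genuine strict inequality.

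The cleaner route, which I would actually pursue, is to look slightly before $t_0$. For $s < t_0$ write, using $X_s \geq 0$,
\begin{equation*}
    \ev[\bf{1}_{\{X_{t_0} = 0\}} X_s] \geq 0,
\end{equation*}
and subtract the identity $\ev[\bf{1}_{\{X_{t_0}=0\}}X_{t_0}] = 0$ to obtain $\ev[\bf{1}_{\{X_{t_0}=0\}}(X_s - X_{t_0})] \geq 0$. Now substitute the SDE increments: $X_s - X_{t_0} = -(W_{t_0} - W_s) + \alpha(\ell_{t_0} - \ell_s) - (L_{t_0} - L_s)$. Taking expectations against $\bf{1}_{\{X_{t_0}=0\}}$, using $\ell_{t_0} - \ell_s = \ev[L_{t_0} - L_s]$, monotonicity of $L$, and $\pr(X_{t_0}=0)\geq 1/\alpha$, the local-time contribution $-\ev[\bf{1}_{\{X_{t_0}=0\}}(L_{t_0}-L_s)] + \alpha\pr(X_{t_0}=0)\,\ev[L_{t_0}-L_s]$ is at least $(\alpha\pr(X_{t_0}=0) - 1)\,\ev[L_{t_0} - L_s] \geq 0$. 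Hence $-\ev[\bf{1}_{\{X_{t_0}=0\}}(W_{t_0} - W_s)] \geq -(\alpha\pr(X_{t_0}=0)-1)\ev[L_{t_0}-L_s] \geq$ something controllable, but more importantly we get an inequality of the form $\ev[\bf{1}_{\{X_{t_0}=0\}}(W_s - W_{t_0})] \geq 0$ for \emph{all} $s \in [0, t_0)$. The contradiction then comes from a Brownian fluctuation argument: conditionally on $\F_s$, on the event where $X_{t_0}=0$ is "about to happen", the increment $W_{t_0} - W_s$ must be large and negative often enough that $\ev[\bf{1}_{\{X_{t_0}=0\}}(W_{t_0}-W_s)] > 0$ strictly, contradicting the displayed inequality.

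The main obstacle, and where I expect to spend the most care, is making the final fluctuation argument rigorous: one needs that the event $\{X_{t_0}=0\}$ is not "too predictable" from $\F_s$, so that the Brownian increment $W_{t_0} - W_s$ genuinely contributes a strictly negative mean on that event. This is really a statement that an atom at the origin cannot form without a compensating Brownian excursion, and it is morally the mean-field analogue of the finite-system crossing argument (invoking \cite[Proposition 2.14]{le_gall_brownian_2016} or the reflection principle). Concretely, I would condition on $\F_s$, write $X_{t_0} = X_s + (W_{t_0}-W_s) - \alpha(\ell_s - \ell_{t_0}) + (L_{t_0}-L_s)$ and note that on $\{X_{t_0}=0\}$ the Skorokhod minimality forces the path $X$ to have hit $0$ on $[s,t_0]$, so $W_{t_0}-W_s \leq \alpha(\ell_{t_0}-\ell_s) - X_s$; integrating this bound against $\bf{1}_{\{X_{t_0}=0\}}$ and using $\ev[\bf{1}_{\{X_{t_0}=0\}}X_s]$ together with $\ell_{t_0}-\ell_s \to 0$ as $s\nearrow t_0$ (which itself needs the a priori continuity of $\ell$, or can be bootstrapped) yields $\ev[\bf{1}_{\{X_{t_0}=0\}}(W_{t_0}-W_s)] \leq \alpha\pr(X_{t_0}=0)(\ell_{t_0}-\ell_s) - \ev[\bf{1}_{\{X_{t_0}=0\}}X_s]$, and combined with the earlier lower bound $\ev[\bf{1}_{\{X_{t_0}=0\}}(W_{t_0}-W_s)] \leq 0$ one pushes $\ev[\bf{1}_{\{X_{t_0}=0\}}X_s] \to 0$, i.e. $X_s \to 0$ a.s. on $\{X_{t_0}=0\}$; but then $\{X_{t_0}=0\}\subset\{X_s=0\}$ up to null sets for all $s<t_0$, which is impossible for reflecting Brownian motion with a finite-variation drift unless $\pr(X_{t_0}=0)=0$, contradicting $\pr(X_{t_0}=0)\geq 1/\alpha > 0$.
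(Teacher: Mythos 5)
Your proposal has a genuine gap at its decisive step. From $\ev[\bf{1}_{\{X_{t_0}=0\}}(X_s - X_{t_0})] \geq 0$ and the decomposition $X_s - X_{t_0} = -(W_{t_0}-W_s) + \alpha(\ell_{t_0}-\ell_s) - (L_{t_0}-L_s)$, you bound the local-time contribution $\alpha\pr(X_{t_0}=0)\,\ev[L_{t_0}-L_s] - \ev[\bf{1}_{\{X_{t_0}=0\}}(L_{t_0}-L_s)]$ from \emph{below} by $(\alpha\pr(X_{t_0}=0)-1)\ev[L_{t_0}-L_s] \geq 0$. But a lower bound on these terms makes the inequality $0 \leq \ev[\bf{1}_{\{X_{t_0}=0\}}(X_s-X_{t_0})]$ weaker, not stronger: it does not deliver the key claim $\ev[\bf{1}_{\{X_{t_0}=0\}}(W_{t_0}-W_s)] \leq 0$. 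To sign that Brownian term you would need an \emph{upper} bound of zero on the local-time contribution, which you do not have; controlling it via $\ell_{t_0}-\ell_s \to 0$ presupposes continuity of $\ell$, and in the paper continuity (Proposition \ref{prop:sol_cont}) is deduced \emph{from} this lemma, so that route is circular unless you prove continuity independently. The closing step is also a non sequitur: even granting $\ev[\bf{1}_{\{X_{t_0}=0\}}X_s] \to 0$ as $s \nearrow t_0$, this only says $X_s \to 0$ on the event (which is anyway what path regularity would give when $\ell$ does not jump at $t_0$); it does not imply $\{X_{t_0}=0\} \subset \{X_s=0\}$ for fixed $s < t_0$, and the asserted impossibility ``for reflecting Brownian motion with a finite-variation drift'' is exactly the fluctuation statement you defer and never prove. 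So the heart of the argument is missing, and the preparatory inequalities do not set it up.

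For comparison, the paper's proof needs none of this machinery and no analysis near $t_0-$: it restarts the dynamics at $t_0$ (so without loss of generality $t_0 = 0$ and the atom sits in the initial condition) and conditions on $\{\xi = 0\}$. Writing $\ell_t = \pr(\xi=0)\ev[L_t \mid \xi=0] + \pr(\xi>0)\ev[L_t \mid \xi>0]$, the hypothesis $\alpha\pr(\xi=0) \geq 1$ together with the fact that particles started strictly positive still accrue strictly positive expected local time gives the \emph{strict} bound $\alpha\ell_t > \ev[L_t \mid \xi=0]$ for $t>0$; taking $\ev[\,\cdot \mid \xi=0]$ in the SDE then yields $\ev[X_t \mid \xi=0] = -\alpha\ell_t + \ev[L_t \mid \xi=0] < 0$, contradicting $X_t \geq 0$. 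Your opening computation at time $t_0$ is close in spirit, but because you keep the original initial condition $\xi$ the term $\ev[\bf{1}_{\{X_{t_0}=0\}}(\xi + W_{t_0})]$ does not vanish and no strictness is available; restarting at $t_0$ is precisely what kills that term and lets the rest of the population supply the strict inequality.
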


\begin{proof}
The result is trivial in the case $\alpha < 1$, so let us assume that $\alpha \geq 1$. Towards a contradiction, let $t_0 \in [0, T)$ be such that $X_{t_0} = 0$ with probability at least $\frac{1}{\alpha}$. We may assume without loss of generality that $t_0 = 0$, otherwise we simply consider a system started from $X_{t_0}$. Then for $t > 0$,
\begin{equation*}
    \ell_t = \pr(\xi = 0) \ev[L_t \vert \xi = 0] + \pr(\xi > 0) \ev[L_t \vert \xi > 0] > \frac{1}{\alpha} \ev[L_t \vert \xi = 0].
\end{equation*}
In the inequality, we used that either $\pr(\xi = 0) > \frac{1}{\alpha}$ or $\pr(\xi = 0) = \frac{1}{\alpha}$ and $\ev[L_t \vert \xi > 0] > 0$. Thus,
\begin{equation*}
    X_t = \xi + W_t - \alpha \ell_t + L_t < \xi + W_t - \ev[L_t \vert \xi = 0] + L_t.
\end{equation*}
Taking conditional expectations with respect to $\xi = 0$ on both sides above implies $\ev[X_t \vert \xi = 0] < 0$, which violates the nonnegativity of $X$.
\end{proof}

\begin{proposition} \label{prop:sol_cont}
Let $(\ell, T)$ be a solution to McKean--Vlasov SDE \eqref{eq:non_linear_skorokhod}. Then $\ell$ is continuous on $[0, T)$.
\end{proposition}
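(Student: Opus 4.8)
The plan is to show that a solution $\ell$ to the McKean--Vlasov SDE cannot have a jump, by arguing that a jump in $\ell$ would force an atom at the origin in the law of $X$, contradicting Lemma \ref{lem:alive_cond}. Since $\ell_t = \ev L_t$ and $L_t = \sup_{0 \le s \le t}(\xi + W_s - \alpha \ell_s)_-$ is nondecreasing in $t$, the map $t \mapsto \ell_t$ is automatically nondecreasing; what has to be ruled out is a downward jump, i.e.\ a discontinuity of the first kind. Suppose $\ell$ jumps at some time $t_0 \in (0, T)$ with $\Delta\ell_{t_0} = \ell_{t_0} - \ell_{t_0-} = \delta > 0$ (the case $t_0 = 0$ being handled by the fact that $\ell_0 = \ev L_0 = 0$ and $\ell$ is right-continuous as a sup of sups, so a jump at $0$ cannot be a downward jump, or can be excluded directly).

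The key step: from the Skorokhod representation $X_t = \xi + W_t - \alpha\ell_t + L_t$ with $(X, L)$ solving the Skorokhod problem, I would analyze what happens to $X$ across the jump. Because $\ell$ drops by $-\alpha\delta$ instantaneously while $W$ is continuous, the driving path $Z_t = \xi + W_t - \alpha\ell_t$ has a downward jump of size $-\alpha\delta$ at $t_0$. For the Skorokhod map, $X_{t_0} = Z_{t_0} + \sup_{0 \le s \le t_0}(Z_s)_- $. I would compare $X_{t_0-}$ and $X_{t_0}$: the continuous part $X_{t_0-} = Z_{t_0-} + \sup_{s < t_0}(Z_s)_-$, and crossing the jump, either $L$ also jumps up by $\alpha\delta$ (when $Z_{t_0}$ dips below the running minimum), in which case $X_{t_0} = \max(0, X_{t_0-} - \alpha\delta)$, or $X$ itself jumps down by $\alpha\delta$. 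In either case one sees that on the event $\{X_{t_0-} \le \alpha\delta\}$ — which, by continuity of $X$ before $t_0$, contains a neighborhood's worth of trajectories and should have probability bounded below — we get $X_{t_0} = 0$. I would then quantify this: by the reflection-principle / small-ball estimates for Brownian motion together with the explicit formula $\ell_{t_0-} = \ev\sup_{s < t_0}(\xi + W_s - \alpha\ell_s)_-$, show that $\pr(X_{t_0} = 0) \ge \pr(X_{t_0-} \le \alpha\delta) > 0$, and in fact push the bound past $1/\alpha$ — alternatively, one can iterate: if $\ell$ jumps, apply the displacement rule and use that the Skorokhod-problem solution keeps $X \ge 0$, so the jump $\delta$ must satisfy a fixed-point inequality $\alpha\delta \ge$ (something involving the mass of $X_{t_0-}$ near zero), and combined with Lemma \ref{lem:alive_cond} forcing $\pr(X_{t_0-} = 0) < 1/\alpha$ one derives a contradiction. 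A cleaner route, which I would probably prefer, is to invoke the no-atom mechanism directly: a downward jump of size $\alpha\delta$ in the driving signal instantaneously relocates the conditional law, pushing mass $\pr(X_{t_0-} \in [0, \alpha\delta))$ onto the origin; since the Skorokhod reflection absorbs nothing but instead stacks this mass at $0$, we get $\pr(X_{t_0} = 0) \ge \pr(X_{t_0-} < \alpha\delta)$, and letting nothing degenerate, a genuine jump $\delta > 0$ gives $\pr(X_{t_0} = 0) \ge 1/\alpha$ because otherwise the self-consistency $\delta = \Delta\ell_{t_0} = \ev[\Delta L_{t_0}]$ and the structure of the reflection force the jump to be larger — this is the CDF-valued analogue of the ``physical jump'' discussion in Section \ref{sec:supercooled}.

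The main obstacle is making the quantitative lower bound on $\pr(X_{t_0} = 0)$ rigorous and tight enough to reach $1/\alpha$ rather than merely something positive: one must carefully track how the instantaneous displacement by $\alpha\delta$ interacts with the running-supremum term in the Skorokhod map, and show the relocated mass genuinely lands on $\{0\}$ rather than being spread. I expect this to require writing $X_{t_0} = X_{t_0-} \vee (\text{something})$ pathwise across the jump, then taking expectations of indicators and using the elementary identity $\ell_{t_0} - \ell_{t_0-} = \ev[(L_{t_0} - L_{t_0-})]$ together with $L_{t_0} - L_{t_0-} = (\alpha\delta - X_{t_0-})_+$ on suitable events, yielding $\delta = \ev[(\alpha\delta - X_{t_0-})_+] \le \alpha\delta\,\pr(X_{t_0-} < \alpha\delta)$, hence $\pr(X_{t_0-} < \alpha\delta) \ge 1/\alpha$; but $X$ is continuous up to $t_0$ so $X_{t_0-} = X_{t_0}$ on the (in fact all) trajectories where no jump occurs, and combining with $\{X_{t_0-} < \alpha\delta\} \subset \{X_{t_0} = 0\}$ (after the displacement) gives $\pr(X_{t_0} = 0) \ge 1/\alpha$, contradicting Lemma \ref{lem:alive_cond}. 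Once the jump is excluded, right-continuity of $\ell$ (immediate from $\ell_t = \ev\sup_{0 \le s \le t}(\xi + W_s - \alpha\ell_s)_-$ and dominated convergence, using the square-root growth bound from Lemma \ref{lem:lower_bound} for integrability) upgrades ``no jumps'' to genuine continuity on $[0, T)$. The remaining care is the endpoints and the boundary case $\pr(\xi = 0)$ close to $1/\alpha$, which one disposes of by the same inequality.
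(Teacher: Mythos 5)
Your proof is correct, and its skeleton matches the paper's: assume a jump $\Delta\ell_{t_0} = \delta > 0$ at some $t_0 \in (0,T)$, read off from the explicit Skorokhod map the pathwise relation $\Delta L_{t_0} = (X_{t_0-} - \alpha\delta)_-$, take expectations to get the jump condition $\delta = \ev(X_{t_0-}-\alpha\delta)_-$, and contradict Lemma \ref{lem:alive_cond} by showing $\pr(X_{t_0}=0)\ge 1/\alpha$. Where you genuinely differ is in how that last bound is reached: the paper introduces the auxiliary function $J(\delta')=\ev(X_{t_0-}/\delta'-\alpha)_-$, uses its strict monotonicity and the limits at $0$ and $\infty$, and then rules out $\pr(X_{t_0}=0)<1/\alpha$ by positing a second admissible jump $\delta_1$ and invoking subadditivity of the negative part; you instead observe directly that $\delta=\ev[(\alpha\delta-X_{t_0-})_+]\le\alpha\delta\,\pr(X_{t_0-}<\alpha\delta)$, hence $\pr(X_{t_0-}<\alpha\delta)\ge 1/\alpha$, and $\{X_{t_0-}<\alpha\delta\}\subset\{X_{t_0}=0\}$. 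This one-line estimate streamlines the paper's conclusion and, unlike the paper's limit computation $J(\delta')\to\alpha>1$ as $\delta'\to\infty$, makes no implicit appeal to $\alpha>1$, so it treats all regimes uniformly. A few points to tidy: since $\ell=\ev L$ is nondecreasing, the jumps to exclude are upward jumps of $\ell$ (equivalently downward jumps of the driving path $\xi+W-\alpha\ell$), not ``downward jumps of $\ell$''; the identity $\Delta\ell_{t_0}=\ev[\Delta L_{t_0}]$ should be justified by monotone convergence ($\ell_{t_0-}=\lim_{s\uparrow t_0}\ev L_s=\ev L_{t_0-}$), after which your formula $\Delta L_{t_0}=(\alpha\delta-X_{t_0-})_+$ holds pathwise, not merely ``on suitable events''; and right-continuity needs no separate argument (nor Lemma \ref{lem:lower_bound}) since solutions are c\`adl\`ag by Definition \ref{def:mfl_solution}, so only left limits must be controlled.
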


\begin{proof}
Again arguing by contradiction, suppose that $\ell$ has a jump at $t_0 \in [0, T)$. Then we must have
\begin{equation*}
    X_{t_0} - X_{t_0-} = \Delta X_{t_0} = -\alpha \Delta \ell_{t_0} + \Delta L_{t_0},
\end{equation*}
which implies $X_{t_0-} -\alpha \Delta \ell_{t_0} = X_{t_0} - \Delta L_{t_0}$. We take the negative part on both sides to obtain $(X_{t_0-} -\alpha \Delta \ell_{t_0})_- = (X_{t_0} - \Delta L_{t_0})_-$. Since $X_{t_0} \geq 0$, the right-hand side can only be negative if $\Delta L_{t_0}$ is positive, in which case $X_{t_0} = 0$. Consequently, we see that $(X_{t_0-} -\alpha \Delta \ell_{t_0})_- = \Delta L_{t_0}$, which finally implies the jump condition
\begin{equation} \label{eq:jump_condition}
    \Delta \ell_{t_0} = \ev (X_{t_0-} -\alpha \Delta \ell_{t_0})_-.
\end{equation}
Zero is always an admissible jump size. However, since we assume that $\Delta \ell_{t_0} > 0$, we can divide through by the quantity $\Delta \ell_{t_0}$ in the jump condition, whereby $\ev (X_{t_0-}/\Delta \ell_{t_0} -\alpha )_- = 1$. Let us introduce the function $J \define [0, \infty) \to [0, \infty)$ defined by $J(\delta) = \ev (X_{t_0-}/\delta -\alpha )_-$. Clearly, we have $J(\Delta \ell_{t_0}) = 1$, which prompts us to search for solutions of the equation $J(\delta) = 1$. Firstly, note that $(X_{t_0-}/\delta - \alpha)_- \to \alpha$ a.\@s.\@ as $\delta \to \infty$, so $J(\delta) \to \alpha > 1$ as $\delta$ tends to $\infty$ by the dominated convergence theorem. Secondly, on $X_{t_0-} > 0$, we have $X_{t_0-}/\delta \to \infty$ as $\delta \to 0$, while $X_{t_0-}/\delta = 0$ on $X_{t_0-} = 0$. Consequently,
\begin{equation*}
    \ev (X_{t_0-}/\delta -\alpha )_- = \alpha \pr(X_{t_0-} = 0) + \ev[\bf{1}_{X_{t_0-} > 0}(X_{t_0-}/\delta -\alpha )_-] \to \alpha \pr(X_{t_0-} = 0)
\end{equation*}
as $\delta \to 0$. As $t_0 < T$, we know from Lemma \ref{lem:alive_cond} that $\pr(X_{t_0-} = 0) < \frac{1}{\alpha}$, which implies that $J(\delta) \to \alpha \pr(X_{t_0-} = 0) < 1$ as $\delta\to 0$. Since $J$ is strictly increasing, this means there exists a unique $\delta_0 \in (0, \infty)$ with $J(\delta_0) = 1$ and it follows that $\delta_0 = \Delta \ell_{t_0-}$.

Our goal is now to show that $\pr(X_{t_0} = 0) \geq \frac{1}{\alpha}$, which in view of Lemma \ref{lem:alive_cond} implies that $t_0 \geq T$, which is the desired contradiction. If $\pr(X_{t_0} = 0) < \frac{1}{\alpha}$, then using the same argument as above, we find $\delta_1 \in (0, \infty)$ for which $\ev (X_{t_0}/\delta_1 -\alpha )_- = 1$. However, then
\begin{align*}
    \Delta L_{t_0} + (X_{t_0} -\alpha \delta_1)_- &= (-\Delta L_{t_0})_- + (X_{t_0-} + \Delta L_{t_0} - \alpha \delta_0 -\alpha \delta_1)_- \\
    &\geq (X_{t_0-} - \alpha(\delta_0 + \delta_1))_-,
\end{align*}
where the last inequality follows from the subadditivity of the negative part. Taking expectations on both sides above and dividing by $\delta_0 + \delta_1$ yields $1 \geq J(\delta_0 + \delta_1)$. In view of the strict monotonicity of $J$, we arrive at the contradiction $1 = J(\delta_0) < J(\delta_0 + \delta_1) \leq 1$.
\end{proof}

The proof of Proposition \ref{prop:sol_cont} shows that if a solution $(\ell, T)$ to SDE \eqref{eq:non_linear_skorokhod} has a jump, then it terminates immediately afterwards as it arrives at an inadmissible state, where at least $\frac{1}{\alpha}$ of the mass is concentrated at the origin.

Let us now show that solutions are guaranteed to exist for small time as long as $\pr(\xi = 0) < \frac{1}{\alpha}$. This requirement is automatically satisfied in the subcritical case $\alpha < 1$.

\begin{proposition} \label{prop:small_time_existence}
Assume that $\pr(\xi = 0) < \frac{1}{\alpha}$. Then there exists a solution $(\ell, T)$ to McKean--Vlasov SDE \eqref{eq:non_linear_skorokhod} with $T > 0$.
\end{proposition}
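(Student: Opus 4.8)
The plan is to run a Picard--Lindel\"of--type fixed-point argument on the map that sends a candidate interaction term $\ell$ to the expected local time of the associated reflected process, exactly paralleling the construction of short-time solutions for the finite system in Proposition \ref{prop:ex_short_time}. Concretely, fix a small horizon $T > 0$ and work on the space of nondecreasing continuous paths $f \colon [0, T] \to [0, \infty)$ started from zero, equipped with the sup-norm. For such an $f$, let $(X^f, L^f)$ be the (explicit) solution to the Skorokhod problem for $\xi + W - \alpha f$, so that $L^f_t = \sup_{0 \le s \le t}(\xi + W_s - \alpha f_s)_-$, and define $\Phi(f)_t = \ev L^f_t$. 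A solution of McKean--Vlasov SDE \eqref{eq:non_linear_skorokhod} on $[0, T)$ is precisely a fixed point of $\Phi$. The map $\Phi$ takes values in our path space because $\ev L^f$ is nondecreasing, starts at zero, and is continuous in $t$ by dominated convergence (using $L^f_t \le \xi + \sup_{s \le t}|W_s| + \alpha f_t$, which is integrable since $\xi$ is assumed, here and throughout the section, to have a finite first moment).

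For the contraction estimate, given $f$, $g$ in the path space, the Skorokhod map is $1$-Lipschitz in the driving path, so $|L^f_t - L^g_t| \le \alpha \sup_{s \le t}|f_s - g_s| \le \alpha \|f - g\|_\infty$ pointwise, and hence $\|\Phi(f) - \Phi(g)\|_\infty \le \alpha \|f - g\|_\infty$. When $\alpha < 1$ this is already a contraction on $C([0, T])$ for any $T$, giving a global-in-time fixed point; this recovers the subcritical case with no smallness needed. For $\alpha \ge 1$ this crude bound is not enough, and the point of the hypothesis $\pr(\xi = 0) < 1/\alpha$ is that the local time $L^f_t$ is small, uniformly, when $t$ is small, because most of the mass of $\xi$ sits strictly away from the origin. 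The refinement is to get a bound of the form $|L^f_t - L^g_t| \le \alpha \, p(T) \, \|f - g\|_\infty$ on $[0, T]$, where $p(T) \to \pr(\xi = 0) < 1/\alpha$ as $T \to 0$: indeed $L^f_t - L^g_t \neq 0$ for a given realisation only if that realisation's path has already touched the origin by time $t$, an event whose probability tends to $\pr(\xi = 0)$ as $t \downarrow 0$. Making this precise --- splitting on $\{\xi \le \eta\}$ versus $\{\xi > \eta\}$ for small $\eta > 0$, and controlling on the latter set the probability that $\xi + W$ dips below $\alpha f_t$ within time $t$, uniformly over $f$ in the path space (here one uses that $f_t \le \Phi^{(\text{prev})}$ is a priori bounded, or one restricts the fixed-point argument to a ball of radius depending on $T$) --- is the technical heart of the proof. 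Once such a $p(T)$ is available, choose $T$ small enough that $\alpha p(T) < 1$, and Banach's fixed-point theorem produces the desired $(\ell, T)$.

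The main obstacle I expect is precisely this uniform-in-$f$ smallness estimate: one must ensure that the bound $\pr(X^f_t = 0 \text{ for some realisation path hitting } 0) \to \pr(\xi = 0)$ holds uniformly over the relevant class of candidate paths $f$, and the class is not a priori norm-bounded. The clean fix is a standard one: first observe that any fixed point must satisfy the a priori bound $\ell_t \le \ev\sup_{s \le t}(\xi + W_s)_- =: R(t)$ (taking $f \equiv 0$ inside the sup is the worst case since $-\alpha f \le 0$), so restrict $\Phi$ to the closed, $\Phi$-invariant ball $\{f : f_t \le R(t) \text{ for all } t \le T\}$ --- invariance holds because $\Phi(f)_t = \ev L^f_t$ is monotone decreasing in $f$ and $f \ge 0$ --- and carry out the contraction argument there, where the uniform estimate is now available. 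A secondary, minor point is measurability and adaptedness of the constructed $\ell$: since $\ell$ is deterministic (it is a fixed point of a deterministic map, being an expectation), there is nothing to check beyond continuity, which we already have. With the fixed point $\ell$ in hand on $[0, T]$, setting $(X, L)$ to be the Skorokhod solution for $\xi + W - \alpha \ell$ gives $\ev L_t = \ell_t$ by construction, so $(\ell, T)$ solves McKean--Vlasov SDE \eqref{eq:non_linear_skorokhod} with $T > 0$, completing the proof.
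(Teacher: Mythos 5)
Your overall strategy is the paper's: set $\Phi(f)_t = \ev\sup_{0\le s\le t}(\xi+W_s-\alpha f_s)_-$, restrict to a suitable ball of small nondecreasing continuous paths on a short horizon, show the Lipschitz constant improves from $\alpha$ to roughly $\alpha\,\pr(\text{path gets near }0)$, which is $<1$ for small $T$ by the hypothesis $\pr(\xi=0)<1/\alpha$, and apply Banach's fixed-point theorem. However, the concrete ``clean fix'' you give for the invariance step is wrong, and this is a genuine gap rather than a cosmetic one. You claim that any fixed point satisfies $\ell_t\le \ev\sup_{s\le t}(\xi+W_s)_- =: R(t)$ because ``taking $f\equiv 0$ inside the sup is the worst case since $-\alpha f\le 0$'', and that the ball $\{f: f\le R\}$ is $\Phi$-invariant because $\Phi$ is ``monotone decreasing in $f$''. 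Both directions are reversed: increasing $f$ decreases $\xi+W_s-\alpha f_s$ and hence \emph{increases} its negative part, so $\Phi$ is monotone \emph{increasing} in $f$, and $f\equiv 0$ is the \emph{best} case, giving $\Phi(f)\ge \Phi(0)=R$ pointwise. Thus $R$ is a lower bound for fixed points, not an upper bound, and $\Phi$ maps your proposed ball $\{f\le R\}$ outside itself (except at $f=0$), so the contraction argument as you set it up has no invariant domain on which the improved Lipschitz estimate holds.

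What is missing is precisely the quantitative invariance estimate the paper supplies: choose $\epsilon>0$ and $\beta>\alpha$ with $\pr(\xi - Z_t\le\epsilon)<1/\beta$ for $t\le\epsilon$, where $Z_t=\sup_{0\le s\le t}(-W_s)$, and then show that on the ball $\{f:\alpha f_t\le\epsilon \text{ for } t\le T\}$ one has $\alpha\Phi(f)_t\le \alpha c\sqrt{t}+\alpha\epsilon/\beta\le\epsilon$ for $T$ small (using $\Phi(f)_t\le \ev[Z_t]+\alpha f_t\,\pr(\xi-Z_t\le\epsilon)$), so that this ball is $\Phi$-invariant; on it, the same event-splitting gives the Lipschitz constant $\alpha\pr(\xi-Z_t\le\epsilon)\le\alpha/\beta<1$. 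Your sketch gestures at exactly this (splitting on $\{\xi\le\eta\}$ and restricting to ``a ball of radius depending on $T$''), so the idea is present, but the argument you actually commit to does not deliver it. A minor additional point: no first-moment assumption on $\xi$ is needed or made in the paper; since $\xi\ge 0$ one has $L^f_t\le Z_t+\alpha f_t$, which is integrable, so continuity of $\Phi(f)$ follows without assuming $\ev\xi<\infty$.
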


\begin{proof}
For $t \in [0, \infty)$, let $\cal{I}_c[0, t]$ denote the space of nondecreasing continuous paths $[0, t] \to [0, \infty)$ started from zero. Fix $T > 0$ to be determined later and let us introduce the map $\Lambda \define \cal{I}_c[0, T] \to \cal{I}_c[0, T]$ defined by
\begin{equation*}
    \Lambda_t(f) = \ev \sup_{0 \leq s \leq t} (\xi + W_s - \alpha f_s)_-
\end{equation*}
for $t \in [0, T]$ and $f \in \cal{I}_c[0, T]$. Since $\pr(\xi = 0) < \frac{1}{\alpha}$, we can find $\epsilon > 0$ such that $\pr(\xi - Z_t \leq \epsilon) < \frac{1}{\beta}$ for all $t \in [0, \epsilon]$ and some $\beta > \alpha$, where $Z_t = \sup_{0 \leq s \leq t} (-W_s)$ and we have used the fact that $Z_t$ is nondecreasing. Now, we claim that we can choose $T \in [0, \epsilon]$, such that if $f \in \cal{I}_c[0, T]$ with $\alpha f_t \leq \epsilon$ for $t \in [0, T]$, then $\alpha \Lambda_t(f) \leq \epsilon$. Indeed, we estimate for $t \in [0, T]$ with $\alpha f_t \leq \epsilon$,
\begin{align} \label{eq:lambda_est}
    \Lambda_t(f) &\leq \ev (\xi - Z_t - \alpha f_t)_- \notag \\
    &=\ev\biggl[\bf{1}_{\{\xi - Z_t \leq \epsilon\}} (\xi - Z_t - \alpha f_t)_- \biggr] + \ev\biggl[\bf{1}_{\{\xi - Z_t > \epsilon\}} (\xi - Z_t - \alpha f_t)_- \biggr] \notag \\
    &\leq \ev\biggl[\bf{1}_{\{\xi - Z_t \leq \epsilon\}} (Z_t + \alpha f_t) \biggr] + \ev\biggl[\bf{1}_{\{\xi - Z_t > \epsilon\}} (\xi - Z_t - \epsilon)_- \biggr] \notag \\
    &\leq \ev[Z_t] + \alpha  f_t \pr(\xi - Z_t \leq \epsilon) \notag \\
    &\leq c\sqrt{t} + \frac{\epsilon}{\beta},
\end{align}
where $c = (\frac{2}{\pi})^{1/2}$ and we used in the fourth inequality that $(\xi - Z_t - \epsilon)_- = 0$ on $\{\xi - Z_t > \epsilon\}$. Thus, in order to ensure that $\alpha \Lambda_t(f) \leq \epsilon$, it is enough to guarantee that $\alpha c \sqrt{t} + \frac{\alpha}{\beta}\epsilon \leq \epsilon$. This holds whenever $t \in [0, T]$ with $T \leq (\frac{\beta - \alpha}{\alpha \beta} \frac{\epsilon}{c})^2$. Since we need in addition that $T \leq \epsilon$, we can set $T = (\frac{\beta - \alpha}{\alpha \beta} \frac{\epsilon}{c})^2 \land \epsilon$.

Next, we want to show that the map $\Lambda$ is a contraction mapping on the set of all $f \in \cal{I}_c[0, T]$ such that $f_t \leq \frac{\epsilon}{\alpha}$ for $t \in [0, T]$. We equip this space with the distance $(f^1, f^2) \mapsto \sup_{0 \leq t \leq T} \lvert f^1_t - f^2_t\rvert$, which turns it into a complete metric space. Let $f^1$, $f^2 \in \cal{I}_c[0, T]$ be uniformly bounded by $\frac{\epsilon}{\alpha}$. Then, similarly to \eqref{eq:lambda_est}, we estimate
\begin{align*}
    \bigl\lvert \Lambda_t(f^1) - \Lambda_t(f^2)\bigr\rvert &\leq \biggl\lvert \ev\biggl[\bf{1}_{\{\xi - Z_t \leq \epsilon\}}\biggl(\sup_{0 \leq s \leq t}(\xi + W_s - \alpha f^1_s)_- - \sup_{0 \leq s \leq t}(\xi + W_s - \alpha f^2_s)_-\biggr)\biggr]\biggr\rvert \\
    &\leq \alpha \ev\biggl[\bf{1}_{\{\xi - Z_t \leq \epsilon\}} \sup_{0 \leq s \leq t} \bigl\lvert f^1_s - f^2_s\bigr\rvert\biggr] \\
    &\leq \frac{\alpha}{\beta} \sup_{0 \leq s \leq t} \bigl\lvert f^1_s - f^2_s\bigr\rvert.
\end{align*}
Taking the supremum over $t \in [0, T]$ on both sides implies that $\sup_{0 \leq t \leq T} \bigl\lvert \Lambda_t(f^1) - \Lambda_t(f^2)\bigr\rvert \leq \sup_{0 \leq t \leq T} \bigl\lvert f^1_t - f^2_t\bigr\rvert$. Since $\frac{\alpha}{\beta} < 1$, this means that $\Lambda$ is indeed a contraction, so there exists a (unique) fixed point $\ell$. It follows that $(\ell, T)$ is a solution to McKean--Vlasov SDE \eqref{eq:non_linear_skorokhod}.
\end{proof}

Next, we establish uniqueness of solutions on common time domains.

\begin{proposition} \label{prop:uniqueness}
For any two solutions $(\ell^1, T_1)$ and $(\ell^2, T_2)$ of McKean--Vlasov SDE \eqref{eq:non_linear_skorokhod} it holds that $\ell^1_t = \ell^2_t$ on $[0, T_1 \land T_2)$.
\end{proposition}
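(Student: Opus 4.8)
The plan is to mirror the uniqueness proof for the finite system, Lemma~\ref{lem:unique_ps}, using the short-time contraction of Proposition~\ref{prop:small_time_existence} and the mass bound of Lemma~\ref{lem:alive_cond} to restart the system at the first time of disagreement. Since $\ell^k_t = \ev L^k_t$ is a \emph{deterministic} function of $t$, the quantity
\begin{equation*}
    T_{\ast} = \inf\bigl\{t \in [0, T_1 \land T_2) \define \ell^1_t \neq \ell^2_t\bigr\}
\end{equation*}
(with $\inf\emptyset = T_1 \land T_2$) is a deterministic number, and it suffices to rule out $T_{\ast} < T_1 \land T_2$. On $[0, T_{\ast})$ we have $\ell^1 = \ell^2$, hence $\xi + W - \alpha\ell^1$ and $\xi + W - \alpha\ell^2$ coincide there, and the explicit formula for the one-dimensional Skorokhod map noted below Definition~\ref{def:skorokhod} forces $(X^1, L^1) = (X^2, L^2)$ on $[0, T_{\ast})$. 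Because solutions are continuous by Proposition~\ref{prop:sol_cont} and the $X$-processes extend continuously, letting $t \uparrow T_{\ast}$ yields $\ell^1_{T_{\ast}} = \ell^2_{T_{\ast}}$ and $X^1_{T_{\ast}} = X^2_{T_{\ast}} =: \eta \geq 0$. By Lemma~\ref{lem:alive_cond}, $\pr(\eta = 0) = \pr(X^1_{T_{\ast}} = 0) < 1/\alpha$.

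Next I would restart at $T_{\ast}$. Set $\tilde{W}_s = W_{T_{\ast} + s} - W_{T_{\ast}}$, which is a Brownian motion independent of $\sigma(\xi, W_u \define u \leq T_{\ast})$ since $T_{\ast}$ is deterministic, and in particular independent of $\eta$; put $\tilde{\ell}^k_s = \ell^k_{T_{\ast} + s} - \ell^k_{T_{\ast}}$, $\tilde{L}^k_s = L^k_{T_{\ast} + s} - L^k_{T_{\ast}}$, $\tilde{X}^k_s = X^k_{T_{\ast} + s}$. Using $X^k_{T_{\ast}} = \xi + W_{T_{\ast}} - \alpha\ell^k_{T_{\ast}} + L^k_{T_{\ast}}$, a direct computation gives $\tilde{X}^k_s = \eta + \tilde{W}_s - \alpha\tilde{\ell}^k_s + \tilde{L}^k_s$ with $\tilde{X}^k_s = \tilde{Z}^k_s + \tilde{L}^k_s$ for $\tilde{Z}^k_s = \eta + \tilde{W}_s - \alpha\tilde{\ell}^k_s$, so that $(\tilde{X}^k, \tilde{L}^k)$ solves the Skorokhod problem for $\tilde{Z}^k$, and $\tilde{\ell}^k_s = \ev\tilde{L}^k_s$; equivalently, by the remark below Definition~\ref{def:mfl_solution}, $\tilde{\ell}^k_s = \ev\sup_{0 \leq r \leq s}(\eta + \tilde{W}_r - \alpha\tilde{\ell}^k_r)_-$ for $s \in [0, T_k - T_{\ast})$. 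Thus each $\tilde{\ell}^k$ is a fixed point of the map $\Lambda$ constructed in the proof of Proposition~\ref{prop:small_time_existence} with $\eta$ and $\tilde{W}$ in place of $\xi$ and $W$; this is legitimate because $\pr(\eta = 0) < 1/\alpha$. That proof produces $\epsilon > 0$ and $\delta > 0$, depending only on the law of $\eta$, such that $\Lambda$ is a contraction on $\{f \in \cal{I}_c[0, \delta] \define \alpha f_t \leq \epsilon \text{ for } t \in [0, \delta]\}$. Shrinking $\delta$ if necessary using continuity of $\tilde{\ell}^1, \tilde{\ell}^2$ and $\tilde{\ell}^k_0 = 0$, both $\tilde{\ell}^1$ and $\tilde{\ell}^2$ restricted to $[0, \delta]$ lie in this set, so uniqueness of the fixed point gives $\tilde{\ell}^1 = \tilde{\ell}^2$ on $[0, \delta]$, i.e. $\ell^1 = \ell^2$ on $[T_{\ast}, T_{\ast} + \delta]$. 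This contradicts the definition of $T_{\ast}$, whence $T_{\ast} = T_1 \land T_2$ and the proposition follows.

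The main obstacle is the bookkeeping in the restart step: one must check carefully that the shifted triple $(\tilde{X}^k, \tilde{L}^k, \tilde{\ell}^k)$ genuinely solves a McKean--Vlasov SDE of the same form — in particular that $\tilde{L}^k$ remains the minimal reflection term, which is exactly the content of the Skorokhod identity $\tilde{X}^k = \tilde{Z}^k + \tilde{L}^k$ with $\tilde{L}^k$ nondecreasing and supported on $\{\tilde{X}^k = 0\}$ — and that the restarted initial law inherits the strict bound $\pr(\eta = 0) < 1/\alpha$ from Lemma~\ref{lem:alive_cond}, which is what makes the contraction estimates of Proposition~\ref{prop:small_time_existence} applicable verbatim; those estimates themselves require no new work.
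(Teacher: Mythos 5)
Your proposal is correct and follows essentially the same route as the paper: both arguments take the first (deterministic) time of disagreement, use Lemma \ref{lem:alive_cond} to get $\pr(X_{T_{\ast}} = 0) < 1/\alpha$, and then restart the system there, invoking the contraction mapping from the proof of Proposition \ref{prop:small_time_existence} to force the two solutions to agree on a short interval past $T_{\ast}$, contradicting its definition. The extra bookkeeping you carry out (identifying $X^1_{T_{\ast}} = X^2_{T_{\ast}}$ via the explicit Skorokhod map, and checking that the shifted triple again solves the McKean--Vlasov equation with the restarted initial law) is exactly what the paper leaves implicit in the phrase ``proceeding as in the proof of Proposition \ref{prop:small_time_existence}.''
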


\begin{proof}
If $T_1 \land T_2 = 0$, the result is evident. So let us assume that $T_1 \land T_2 > 0$ and suppose that $(\ell^1, T_1)$ and $(\ell^2, T_2)$ are two solutions with $\ell^1_t \neq \ell^2_t$ for some $t \in (0, T_1 \land T_2)$. We let $t_{\ast} = \inf\{t \in [0, T_1 \land T_2] \define \ell^1_t \neq \ell^2_t\} < T_1 \land T_2$. Since $t_{\ast} < T_1 \land T_2$, it follows from Lemma \ref{lem:alive_cond} that $\pr(X^1_{t_{\ast}} = 0)$ and $\pr(X^2_{t_{\ast}} = 0)$ are less than $\frac{1}{\alpha}$. Hence, we can proceed as in the proof of Proposition \ref{prop:small_time_existence} to show that for some small time interval $[t_{\ast}, T)$ with $T \in (t_{\ast}, T_1 \land T_2)$, the mapping
\begin{equation*}
    (f_t)_{t \in [t_{\ast}, T)} \mapsto \biggl(f_{t_{\ast}} + \ev \sup_{t_{\ast} \leq s \leq t} \Bigl(X_{t_{\ast}} + (W_s - W_{t_{\ast}}) - \alpha (f_s - f_{t_{\ast}})\Bigr)_-\biggr)_{t \in [t_{\ast}, T)}
\end{equation*}
is a contraction. Since both $(\ell^1_t)_{t \in [t_{\ast}, T)}$ and $(\ell^2_t)_{t \in [t_{\ast}, T)}$ are fixed points of this map, it follows that $\ell^1_t = \ell^2_t$ for $t \in [t_{\ast}, T)$. This contradicts our choice of $t_{\ast}$.
\end{proof}

We can now deliver the proof of Theorem \ref{thm:exist_unique}.

\begin{proof}[Proof of Theorem \ref{thm:exist_unique}]
First, let us construct the maximal solution. Let $\cal{S}$ denote the set of solutions to McKean--Vlasov SDE \eqref{eq:non_linear_skorokhod} and set $T = \sup\{T' \define (\ell', T') \in \cal{S}\}$. We will construct a solution $(\ell, T)$ to McKean--Vlasov SDE \eqref{eq:non_linear_skorokhod}. Clearly, this must be the maximal solution. Its uniqueness follows from Proposition \ref{prop:uniqueness}. Fix a sequence of solutions $(\ell^n, T_n) \in \cal{S}$ with $\lim_{n \to \infty} T_n = T$. By Proposition \ref{prop:uniqueness}, we have that $\ell^n$ and $\ell^m$ coincide on $[0, T_n \land T_m)$ for all $n$, $m \geq 0$. Hence, setting $\ell_t = \ell^n_t$ for $t \in [0, T_n)$ yields a well-defined element of $C([0, T))$ and it is easy to see that $(\ell, T)$ solves McKean--Vlasov SDE \eqref{eq:non_linear_skorokhod}. Thus, $(\ell, T)$ is the desired unique maximal solution.

Next, we verify Property \ref{it:maximality_cond}. By Lemma \ref{lem:alive_cond}, we have that $\pr(X_t = 0) < \frac{1}{\alpha}$ for $t \in [0, T)$. If $T < \infty$ and if it were the case that $\pr(X_T = 0)$ is also strictly smaller than $\frac{1}{\alpha}$, then proceeding as in Proposition \ref{prop:small_time_existence}, we could extend $(\ell, T)$ beyond $T$ in contradiction to its maximality. Thus, it follows that $\pr(X_T = 0) \geq \frac{1}{\alpha}$. The if statement of Property \ref{it:positive_blow_up} follows from Proposition \ref{prop:small_time_existence} while the only if part is a direct consequence of Lemma \ref{lem:alive_cond}. Lastly, let us establish Property \ref{it:finite_blow_up}, beginning with the if statement. We want to show that $T < \infty$ when $\alpha > 1$ or $\alpha = 1$ and $\pr(\xi = 0) = 1$. If $\alpha > 1$, then $T < \infty$ by Lemma \ref{lem:blow_up}. If $\alpha = 1$ and $\pr(\xi = 0) = 1$, so $\pr(\xi = 0) \geq \frac{1}{\alpha}$, then the only if part of Property \ref{it:positive_blow_up} implies $T = 0 < \infty$. It remains to prove that $T < \infty$ implies $\alpha > 1$ or $\alpha = 1$ and $\pr(\xi = 0) = 1$. This is equivalent to showing that $\alpha < 1$ or $\alpha = 1$ and $\pr(\xi = 0) < 1$ implies $T = \infty$. If $\alpha < 1$, then $\frac{1}{\alpha} > 1$. If $T$ were finite, then restarting the system from $X_{T-}$ at time $T$ and proceeding as in the proof of Proposition \ref{prop:small_time_existence}, we can extend the solution. This contradicts the maximality of $T$, so we must have $T = \infty$. Next, we consider the case $\alpha = 1$ and $\pr(\xi = 0) < 1$. Assume again that $T < \infty$. Note that by the discussion above the statement of Theorem \ref{thm:exist_unique} and Proposition \ref{prop:sol_cont}, $X$ can be extended to a continuous process on $[0, T]$. Now, let $C > 0$ and take expectations on both sides of \eqref{eq:non_linear_skorokhod} with respect to $\pr(\cdot \vert \xi \leq C)$. This yields
\begin{align*}
    \ev[X_t \vert \xi \leq C] &= \ev[\xi \vert \xi \leq C] - \ell_t + \ev[L_t \vert \xi \leq C] \\
    &\geq \ev[\xi \vert \xi \leq C] - \ell_t + \ell_t \\
    &= \ev[\xi \vert \xi \leq C],
\end{align*}
where we used that $\ev[L_t \vert \xi = x]$ is nonincreasing in $x \geq 0$ and $\pr(\xi \leq x \vert \xi \leq C) \geq \pr(\xi \leq x)$ for all $x \geq 0$. Letting $t \to T$ implies that $0 = \ev[\xi \vert \xi \leq C]$ and then $C \to \infty$ yields $\ev[\xi] = 0$. But since $\xi \geq 0$, this contradicts $\pr(\xi = 0) < 1$, so $T = \infty$. This concludes the proof.
\end{proof}

Next, we will show that $\ell$ is locally $1/2$-H\"older continuous on $[0, T)$.

\begin{proposition} \label{prop:local_hoelder}
Let $(\ell, T)$ be a solution to McKean--Vlasov SDE \eqref{eq:non_linear_skorokhod}. 
Then $\ell$ is locally $1/2$-H\"older continuous on $[0, T)$.
\end{proposition}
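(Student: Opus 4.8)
The plan is to establish the local estimate: for every $s_0 \in [0, T)$ there exist $h_0 > 0$ and $C < \infty$ with $0 \le \ell_t - \ell_s \le C\sqrt{t - s}$ for all $s_0 \le s \le t \le s_0 + h_0$ (recall $\ell$ is nondecreasing, being the expectation of the nondecreasing process $L$). Since $\ell$ is also continuous on $[0,T)$ by Proposition \ref{prop:sol_cont}, such an estimate around every point upgrades to $1/2$-H\"older continuity on every compact $[a,b] \subset [0,T)$ by a routine compactness argument: cover $[a,b]$ by finitely many of the intervals $(s_i, s_i + h_0(s_i))$, let $\rho$ be a Lebesgue number of this cover, and for arbitrary $s < t$ in $[a,b]$ split $[s,t]$ into $\lceil (t-s)/\rho\rceil$ equal pieces, each contained in a single cover element, summing the local bounds via concavity of the square root.

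The engine is the restart identity for the one-dimensional Skorokhod map: for $s_0 \le s \le t$, the pair $(X, L - L_s)$, restarted at time $s$, solves the Skorokhod problem for $u \mapsto X_s + (W_u - W_s) - \alpha(\ell_u - \ell_s)$, so by the explicit solution formula below Definition \ref{def:skorokhod},
\begin{equation*}
  L_t - L_s = \sup_{s \le r \le t} \bigl(X_s + (W_r - W_s) - \alpha(\ell_r - \ell_s)\bigr)_-.
\end{equation*}
Write $D := \ell_t - \ell_s = \ev[L_t - L_s]$ (a deterministic number) and $Z^s_t := \sup_{s \le r \le t}(W_s - W_r) \ge 0$. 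Since $\ell$ is nondecreasing, $0 \le \ell_r - \ell_s \le D$ for $r \in [s,t]$, and as $x \mapsto x_-$ is nonincreasing we get $L_t - L_s \le \sup_{s \le r \le t}(X_s + (W_r - W_s) - \alpha D)_- = (\alpha D + Z^s_t - X_s)^+$. In the subcritical case $\alpha < 1$ one finishes immediately: take expectations, bound $(\alpha D + Z^s_t - X_s)^+ \le \alpha D + Z^s_t$ using $X_s \ge 0$, and use $\ev[Z^s_t] = \sqrt{2(t-s)/\pi}$ to obtain $(1-\alpha)D \le \sqrt{2(t-s)/\pi}$. The critical and supercritical cases $\alpha \ge 1$, where the $\alpha D$ term can no longer be absorbed, are the main obstacle.

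For general $\alpha$, I would follow the mechanism of the existence proof (Proposition \ref{prop:small_time_existence}) and exploit that the drift contributes only on an event of small probability. Fix $s_0 < T$ and set $p := \pr(X_{s_0} = 0)$; by Lemma \ref{lem:alive_cond}, $p < 1/\alpha$, so pick $\beta > \alpha$ with $\beta p < 1$ and set $\kappa := \tfrac1\beta - p > 0$. Using $X_{s_0} \ge 0$, choose $\eta > 0$ with $\pr(X_{s_0} \le \eta) < p + \kappa/2$, set $\epsilon := \eta/3$, and then choose $h_0 > 0$ small enough that $\alpha(\ell_{s_0+h_0} - \ell_{s_0}) \le \eta/3$ and $\pr\bigl(3 \sup_{s_0 \le r \le s_0+h_0}|W_r - W_{s_0}| > \eta/3\bigr) < \kappa/2$ (possible since $\ell$ and $W$ are continuous). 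Now fix $s_0 \le s \le t \le s_0 + h_0$. Since $\alpha D \le \alpha(\ell_{s_0+h_0} - \ell_{s_0}) \le \epsilon$, on $\{X_s - Z^s_t > \epsilon\}$ we have $\alpha D + Z^s_t - X_s < \alpha D - \epsilon \le 0$, so the positive part vanishes there; taking expectations in the bound above,
\begin{equation*}
  D \le \ev\bigl[(\alpha D + Z^s_t)\,\bf{1}_{\{X_s - Z^s_t \le \epsilon\}}\bigr] \le \alpha D\,\pr(X_s - Z^s_t \le \epsilon) + \ev[Z^s_t].
\end{equation*}
Next, dropping $L_s - L_{s_0} \ge 0$ and using $|W_r - W_{s_0}| \le \Sigma := \sup_{s_0 \le r \le s_0+h_0}|W_r - W_{s_0}|$ and $\ell_s - \ell_{s_0} \le \gamma := \ell_{s_0+h_0} - \ell_{s_0}$, one checks the pathwise bound $X_s - Z^s_t \ge X_{s_0} - 3\Sigma - \alpha\gamma$, so $\{X_s - Z^s_t \le \epsilon\} \subset \{X_{s_0} \le \epsilon + \alpha\gamma + 3\Sigma\}$ and hence $\pr(X_s - Z^s_t \le \epsilon) \le \pr(X_{s_0} \le \eta) + \pr(3\Sigma > \eta/3) < p + \kappa = \tfrac1\beta$. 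Substituting this and $\ev[Z^s_t] = \sqrt{2(t-s)/\pi}$ into the displayed inequality yields $(1 - \alpha/\beta)D \le \sqrt{2(t-s)/\pi}$, i.e. $\ell_t - \ell_s \le \tfrac{\beta}{\beta - \alpha}\sqrt{2(t-s)/\pi}$ on $[s_0, s_0+h_0]$, which is the local estimate.

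The step I expect to be delicate is exactly the passage through $\alpha \ge 1$: the argument hinges on Lemma \ref{lem:alive_cond} to produce $\beta > \alpha$ with $\beta\,\pr(X_{s_0}=0) < 1$, and on making this strict inequality survive on a whole time window $[s_0, s_0+h_0]$ rather than only at the instant $s_0$ — which is the purpose of the pathwise estimate $X_s - Z^s_t \ge X_{s_0} - 3\Sigma - \alpha\gamma$ reducing everything back to the fixed time $s_0$. Once that uniform probability bound is secured, the remainder is elementary.
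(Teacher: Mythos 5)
Your local estimate at a fixed $s_0$ is correct, and its mechanism is essentially the one the paper uses: the restart identity for the Skorokhod map, a split according to whether the state minus the Brownian dip is below a small threshold, a bound of that event's probability strictly below $1/\alpha$ coming from Lemma \ref{lem:alive_cond}, and a rearrangement of $(1-\alpha/\beta)D \le \sqrt{2(t-s)/\pi}$. The gap is in the final step, which you dismiss as a ``routine compactness argument''. Your estimate is one-sided: the pathwise reduction $X_s - Z^s_t \ge X_{s_0} - 3\Sigma - \alpha\gamma$ uses $s \ge s_0$ (it drops $L_s - L_{s_0}\ge 0$), so you only get, for each $s_0$, a bound on the forward window $[s_0, s_0+h_0(s_0)]$ with $h_0(s_0)$ and $C(s_0)$ depending on $s_0$ and with no uniform lower bound on $h_0$. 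The open intervals $(s_i, s_i+h_0(s_i))$ then need not cover a compact $[a,b]$ at all (a point $x$ is covered only if some $s_0<x$ happens to satisfy $h_0(s_0)>x-s_0$, which nothing in the construction guarantees), and more fundamentally, pointwise forward estimates of this kind do not imply H\"older continuity on compacts: the continuous nondecreasing function $f(x) = -\bigl((c-x)_+\bigr)^{1/4}$ admits, at every point $s_0$, constants $h_0(s_0), C(s_0)$ with $f(t)-f(s)\le C(s_0)\sqrt{t-s}$ for $s_0\le s\le t\le s_0+h_0(s_0)$, yet it is not $1/2$-H\"older on any compact containing $c$. So the globalization step, as written, would fail.

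What is missing is uniformity in time, and that is exactly where the paper invests its effort: it first shows $\sup_{t\in[0,T']}\pr(X_t\le 2\epsilon)<1/\gamma$ for some $\epsilon>0$ and $\gamma>\alpha$, using weak continuity of $t\mapsto\L(X_t)$, upper semicontinuity of $t\mapsto\pr(X_t\le 2/n)$ via the Portmanteau theorem, a maximizing-subsequence compactness argument, and Lemma \ref{lem:alive_cond}; with that single bound one window length $\delta$ and one constant work on all of $[0,T']$, and the H\"older estimate follows for all increments at once. Your approach can be repaired with the ideas you already have, but it needs an extra step: the same pathwise transfer works to the left of $s_0$, since for $s\in[s_0-h',s_0]$ one has the pathwise bound $L_{s_0}-L_s\le 2\Sigma'+\alpha\gamma'$ and hence $X_s - Z^s_t\ge X_{s_0}-5\Sigma'-\alpha\gamma'$, where $\gamma'=\ell_{s_0+h_0}-\ell_{s_0-h'}$ is small by the continuity of $\ell$ from Proposition \ref{prop:sol_cont}; this turns your windows into genuine two-sided neighborhoods of each $s_0$, after which the finite-subcover and chaining argument is legitimate. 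Either add that two-sided extension or prove the uniform-in-$t$ bound as the paper does; without one of these the proof is incomplete.
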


\begin{proof}
If $T = 0$, there is nothing to show, so let us assume that $T > 0$. Let $(X, L)$ be the solution to the Skorokhod problem associated with $(\ell, T)$ and fix $T' \in [0, T)$. We claim that there exist $\epsilon > 0$ and $\gamma > \alpha$, such that $\pr(X_t \leq 2\epsilon) < 1/\gamma$ for all $t \in [0, T']$. Indeed, since $[0, T'] \ni t \mapsto \L(X_t)$ is continuous with respect to the topology of weak convergence of measures, the Portmanteau theorem implies that the map $[0, T'] \ni t \mapsto \pr(X_t \leq 2/n)$ is upper semicontinuous for all $n \geq 1$. Consequently, it assumes its maximum at some point $t_n \in [0, T']$. Now, we can choose a convergent subsequence $s_k = t_{n_k}$, $k \geq 1$, with limit $t_{\ast} \in [0, T']$. Then for some $k \geq 1$ large enough, it must hold that $\pr(X_{s_k} \leq 2/n_k) < 1/\alpha$. Otherwise, another application of the Portmanteau theorem yields
\begin{equation*}
    \frac{1}{\alpha} \leq \limsup_{k \to \infty} \pr(X_{s_k} \leq 2/n_k) \leq \limsup_{k \to \infty}\pr(X_{s_k} \leq 2\delta) \leq \pr(X_{t_{\ast}} \leq 2\delta)
\end{equation*}
for all $\delta \in (0, 1]$. But then letting $\delta \to 0$ implies $\pr(X_{t_{\ast}} = 0) \geq 1/\alpha$, a contradiction to Lemma \ref{lem:alive_cond}. Hence, there exists $k \geq 1$ with
\begin{equation*}
    \sup_{t\in [0, T']} \pr(X_t \leq 2/n_k) = \pr(X_{s_k} \leq 2/n_k) < \frac{1}{\alpha}.
\end{equation*}
Given this $k$, we may set $\epsilon = 1/n_k$ and choose $\gamma > \alpha$ sufficiently close to $\alpha$ such that $\pr(X_{s_k} \leq 2\epsilon) < 1/\gamma < 1/\alpha$. 

Next, for $0 \leq s \leq t \leq T'$, set $Z_s^t = \inf_{s \leq u \leq t} (W_u - W_s)$ and let $\delta > 0$ be small enough such that $\pr(Z_s^{s + \delta} < -\epsilon) \leq \frac{1}{2}(\frac{1}{\alpha} - \frac{1}{\gamma})$. Then, for all $s$, $t \in [0, T']$ with $0 \leq t - s \leq \delta$, we have
\begin{align*}
    \pr\bigl(X_s + Z_s^t \leq \epsilon\bigr) &\leq \pr\bigl(X_s + Z_s^t \leq \epsilon,\, Z_s^t < -\epsilon\bigr) + \pr\bigl(X_s + Z_s^t \leq \epsilon,\, Z_s^t \geq -\epsilon\bigr) \\
    &\leq \frac{1}{2}\biggl(\frac{1}{\alpha} - \frac{1}{\gamma}\biggr) + \pr(X_s \leq 2\epsilon) \\
    &< \frac{1}{2}\biggl(\frac{1}{\gamma} + \frac{1}{\alpha}\biggr),
\end{align*}
where we use that $\pr(X_s \leq 2\epsilon) < 1/\gamma$. Finally, we let $\beta$ be the multiplicative inverse of $\frac{1}{2}(\frac{1}{\alpha} - \frac{1}{\gamma})$, so that $\beta > \alpha$. Now, proceeding similarly to \eqref{eq:lambda_est}, we find
\begin{align*}
    \ell_t - \ell_s &= \ev\sup_{s \leq u \leq t}\Bigl(X_s + (W_u - W_s) - \alpha(\ell_u - \ell_s)\Bigr)_- \\
    &\leq -\ev[Z_s^t] + \alpha \pr(X_s + Z_s^t \leq \epsilon) (\ell_t - \ell_s)\\
    &\leq c \sqrt{t - s} + \frac{\alpha}{\beta} (\ell_t - \ell_s),
\end{align*}
where $c = (\frac{2}{\pi})^{1/2}$. Rearranging this inequality implies the desired $1/2$-H\"older continuity on $[0, T']$.
\end{proof}

We conclude this section with a comparison result for McKean--Vlasov SDE \eqref{eq:non_linear_skorokhod} that parallels and follows from the comparison result for the particle system, Proposition \ref{prop:comparison_ps}. It establishes monotonicity of the mean-field limit in the initial condition and the feedback parameter $\alpha$. Note that its proof makes use of the propagation of chaos result, Theorem \ref{thm:poc}, which we establish in the following section.

\begin{proposition} \label{prop:comparison_mfl}
Let $\xi_1$ and $\xi_2$ be two nonnegative random variables such that $\xi_1 \leq \xi_2$ and fix $\alpha_1 \geq \alpha_2 \geq 0$. For $i = 1$, $2$, let $(X^i, L^i)$ be the solution of the Skorokhod problem associated with the solution $(\ell^i, T_i)$ to McKean--Vlasov SDE \eqref{eq:non_linear_skorokhod} with initial condition $\xi_i$ and feedback parameter $\alpha_i$. Then 
\begin{equation}
    X^1_t \leq X^2_t \quad \text{and} \quad \ell^1_t - \ell^1_s \geq \ell^2_t - \ell^2_s
\end{equation}
for all $0 \leq s \leq t < T_1 \land T_2$.
\end{proposition}

\begin{proof}
The proposition follows immediately upon combining the comparison result for the particle system, Proposition \ref{prop:comparison_ps}, with the propagation of chaos result, Theorem \ref{thm:poc}.
\end{proof}

Note that we could have set up a lattice structure similar to the one employed in Proposition \ref{prop:comparison_ps} to obtain a direct proof of Proposition \ref{prop:comparison_mfl}.

\subsection{Propagation of Chaos}

In this section, we establish propagation of chaos for the mean-field limit, proving Theorem \ref{thm:poc}. We provide separate proofs for the three different regimes. 

\subsubsection{Subcritical Regime} \label{subsec:prop_subcritical}

In the subcritical regime $\alpha \in [0, 1)$, both $\tau_N$ and $T$ are infinite, so it trivially holds that $\tau_N \Rightarrow T$ on $[0, \infty]$. Consequently, we can focus on the convergence of $\bar{L}^N_{\cdot \land \tau_N} = \bar{L}^N$. To establish that, a simple coupling argument works.

\begin{proof}[Proof of Theorem \ref{thm:poc}: subcritical case]
Let $\tilde{L}^i = \sup_{0 \leq s \leq t} (\xi_i + W^i_s - \alpha \ell_s)_-$, so that $(\tilde{L}^i)_{i \geq 1}$ are i.i.d.\@ copies of $L$, where $(X, L)$ is the solution to the Skorokhod problem associated to $\ell$. In addition, set $\ell^N_t = \frac{1}{N} \sum_{i = 1}^N \tilde{L}^i_t$. Then it holds that
\begin{align*}
    \sup_{0 \leq s \leq t} \lvert \bar{L}^N_s - \ell_s\rvert &\leq \sup_{0 \leq s \leq t} \lvert \bar{L}^N_s - \ell^N_s\rvert + \sup_{0 \leq s \leq t} \lvert \ell^N_s - \ell_s\rvert \\
    &\leq \alpha \sup_{0 \leq s \leq t} \lvert \bar{L}^N_s - \ell_s\rvert + \sup_{0 \leq s \leq t} \lvert \ell^N_s - \ell_s\rvert,
\end{align*}
where we used in the second step that $L^i_s = \sup_{0 \leq u \leq s} (\xi_i + W^i_u - \alpha \bar{L}^N_u)_-$. Rearranging the above inequality yields
\begin{equation} \label{eq:empirical_est}
    \sup_{0 \leq s \leq t} \lvert \bar{L}^N_s - \ell_s\rvert \leq \frac{1}{1 - \alpha} \sup_{0 \leq s \leq t} \lvert \ell^N_s - \ell_s\rvert.
\end{equation}
We will now establish a central limit theorem for $\sqrt{N}(\ell^N - \ell)$ on $C([0, \infty)$. That is, we show that $Z^N = \sqrt{N} (\ell^N - \ell)$ converges weakly to a Gaussian process on $C([0, \infty)$. First note that from the classical central limit theorem it follows that the limit of the finite-dimensional marginals of $Z^N$ are Gaussian. Thus, to deduce the central limit theorem, we only need to prove that the family $(Z^N)_{N \geq 1}$ is tight on $C([0, \infty))$. We begin by showing that $\ell$ is $\frac{1}{2}$-H\"older continuous uniformly in time. For $t \geq s \geq 0$, we estimate
\begin{align*}
    \ell_t - \ell_s &= \ev[L_t - L_s] \\
    &= \ev \sup_{s \leq u \leq t} \bigl(X_s + (W_u - W_s) - \alpha(\ell_u - \ell_s)\bigr)_- \\
    &\leq -\ev \inf_{s \leq u \leq t} (W_u - W_s) + \alpha (\ell_t - \ell_s),
\end{align*}
where we inserted the representation of the increment $L_t - L_s$ from Proposition \ref{prop:skorokhod_increment} in the second line. Rearranging this inequality implies that $\ell_t - \ell_s \leq \frac{\sqrt{2}}{\sqrt{\pi}(1 - \alpha)} \sqrt{t - s}$. Next, let us set $\Delta^i_{s, t} = \tilde{L}^i_t - \ell_t - (\tilde{L}^i_s - \ell_s)$ for $t \geq s \geq 0$, so that $\lvert \Delta^i_{s, t}\rvert \leq (1 + \alpha) (\ell_t - \ell_s) + \sup_{s \leq u \leq t} \lvert W_u - W_s\rvert$. This implies that
\begin{align*}
    \ev\lvert \Delta^i_{s, t}\rvert^4 &\leq 8(1 + \alpha)^4 (\ell_t - \ell_s)^4 + 8 \ev\sup_{s \leq u \leq t} \lvert W_u - W_s\rvert^4 \\
    &\leq \frac{32 (1 + \alpha)^4}{\pi^2 (1 - \alpha)^4} \lvert t - s\rvert^2 + 8\lvert t -s\rvert^2 \ev \sup_{0 \leq u \leq 1}\lvert W_u\rvert^4,
\end{align*}
where we used the $\frac{1}{2}$-H\"older continuity of $\ell$ in the last step. Consequently, we find that
\begin{equation*}
    \ev\lvert Z_t - Z_s\rvert^4 = \ev\biggl\lvert \frac{1}{\sqrt{N}} \sum_{i = 1}^N \Delta^i_{s, t}\biggr\rvert^4 \leq 3 (\ev\lvert \Delta^i_{s, t}\rvert^2)^2 + \frac{1}{N} \ev\lvert \Delta^i_{s, t}\rvert^4 \leq C\lvert t - s\rvert^2
\end{equation*}
for some positive constant $C$ independent of $N$. Thus, the Kolmogorov tightness criterion implies that $(Z^N)_{N \geq 1}$ is tight on $C([0, \infty))$ and, hence, tends weakly to a Gaussian process $Z$ on $C([0, \infty))$. From this and \eqref{eq:empirical_est}, we derive that $\sqrt{N}\lvert \bar{L}^N_s - \ell_s\rvert$ is stochastically bounded for $N \geq 1$.
\end{proof}

\subsubsection{Critical Regime}

Next, let us consider the critical regime with $\alpha = 1$. To establish propagation of chaos in this case, we will combine an upper bound on the average local time $\bar{L}^N_t = \frac{1}{N}\sum_{i = 1}^N L^i_t$ with tightness arguments to establish the convergence. Unlike in the subcritical case, discussed in Subsection \ref{subsec:prop_subcritical}, we do not obtain a rate of convergence. Note that if $\pr(\xi = 0) = \pr(\xi_1 = 0) = 1$, then $\tau_N$ and $T$ are equal to zero, so the convergence in the statement of Theorem \ref{thm:poc} is trivial. Thus, in what follows, we assume that $\pr(\xi = 0) = \pr(\xi_1 = 0) < 1$. 

Recall from Remark \ref{rem:resource_sharing} that in the critical regime, the particle system exists up to the first time $\tau_N$ such that $\bar{X}^N_t = \frac{1}{N} \sum_{i = 1}^N \xi_i + \frac{1}{N} \sum_{i = 1}^N W^i_t$ hits $(-\infty,0]$. Since $\frac{1}{N} \sum_{i = 1}^N \xi_i \to \ev \xi > 0$ as $N \to \infty$, it follows that $\tau_N \to \infty$. Since $T = \infty$, this implies $\tau_N \Rightarrow T$ on $[0, \infty]$, so again our focus is on establishing the convergence of $\bar{L}^N_{\cdot \land \tau_N}$.

\begin{proposition} \label{prop:tightness_time_marginale}
Let $T_0$ be the supremum over all $t \geq 0$ such that the sequence $(\bar{L}^N_{t \land \tau_N})_{N \geq 1}$ is tight. Then $T_0 > 0$.
\end{proposition}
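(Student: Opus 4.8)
The plan is to prove tightness of the single real-valued random variable $\bar{L}^N_{t \land \tau_N}$ for one sufficiently small $t > 0$; since $s \mapsto \bar{L}^N_{s \land \tau_N}$ is nondecreasing, tightness at such a $t$ implies tightness at every $s \le t$ by stochastic domination, and hence $T_0 \ge t > 0$. The argument runs parallel to the short-time existence proof for the mean-field limit (Proposition \ref{prop:small_time_existence}), the difference being that the deterministic probability $\pr(\xi - Z_t \le \epsilon)$ appearing there is now replaced by an empirical average whose fluctuations must be absorbed into a high-probability event. The starting point is the pathwise representation of the reflection terms: by Definition \ref{def:sol_ps}\ref{it:solution_property}, applied to the present symmetric weights (for which $\sum_{j} q_{ij} L^j = \bar{L}^N$), we have $L^i_s = \sup_{0 \le u \le s}(\xi_i + W^i_u - \bar{L}^N_u)_-$ for $s < \tau_N$. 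Writing $Z^i_s = \sup_{0 \le u \le s}(-W^i_u)$ and using that $\bar{L}^N$ is nondecreasing, the same manipulation as in \eqref{eq:lambda_est} gives $L^i_s \le (\xi_i - Z^i_s - \bar{L}^N_s)_-$, so that, provided $\bar{L}^N_s \le \epsilon$,
\begin{equation*}
    \bar{L}^N_s \le \frac{1}{N}\sum_{i=1}^N \bf{1}_{\{\xi_i - Z^i_s \le \epsilon\}}\bigl(Z^i_s + \bar{L}^N_s\bigr).
\end{equation*}

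Next I would fix the constants. Since $\pr(\xi = 0) < 1$, choose $p \in (\pr(\xi = 0), 1)$. Because $Z_t \to 0$ in probability as $t \to 0$ and $\pr(\xi \le \delta) \downarrow \pr(\xi = 0)$ as $\delta \downarrow 0$, one can first pick $\epsilon > 0$ and then $t > 0$ small enough that $\pr(\xi - Z_t \le \epsilon) < p$ and, with $c = (2/\pi)^{1/2}$ (so that $\ev Z_t = c \sqrt{t}$ for $Z_t := \sup_{0 \le u \le t}(-W_u)$) and some small $\eta > 0$, the quantity $K_0 := (c\sqrt{t} + \eta)/(1 - p)$ satisfies $K_0 < \epsilon$. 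Define the good event
\begin{equation*}
    G_N = \biggl\{\frac{1}{N}\sum_{i=1}^N \bf{1}_{\{\xi_i - Z^i_t \le \epsilon\}} \le p\biggr\} \cap \biggl\{\frac{1}{N}\sum_{i=1}^N Z^i_t \le c\sqrt{t} + \eta\biggr\},
\end{equation*}
which satisfies $\pr(G_N) \to 1$ as $N \to \infty$ by the strong law of large numbers, as $(\xi_i, Z^i_t)_{i \ge 1}$ are i.i.d.\ and $\pr(\xi - Z_t \le \epsilon) < p$, $\ev Z_t = c\sqrt{t} < c\sqrt{t} + \eta$.

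The last step is a bootstrap. On $G_N$, set $\sigma_N = \inf\{s \ge 0 : \bar{L}^N_s \ge \epsilon\} \land t \land \tau_N$. For $s < \sigma_N$ we have $\bar{L}^N_s \le \epsilon$, so the displayed bound applies; since $Z^i_s \le Z^i_t$ and $\bf{1}_{\{\xi_i - Z^i_s \le \epsilon\}} \le \bf{1}_{\{\xi_i - Z^i_t \le \epsilon\}}$, the defining inequalities of $G_N$ give $\bar{L}^N_s \le (c\sqrt{t} + \eta) + p\,\bar{L}^N_s$, whence $\bar{L}^N_s \le K_0 < \epsilon$ for all $s < \sigma_N$. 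As $\bar{L}^N$ is continuous on $[0, \tau_N)$ and starts from $0$, this forces $\sigma_N = t \land \tau_N$ (otherwise $\bar{L}^N_{\sigma_N} = \epsilon$, contradicting $\bar{L}^N_{\sigma_N} = \lim_{s \uparrow \sigma_N}\bar{L}^N_s \le K_0$), and therefore $\bar{L}^N_{t \land \tau_N} = \lim_{s \uparrow t \land \tau_N}\bar{L}^N_s \le K_0$ on $G_N$. Consequently $\pr(\bar{L}^N_{t \land \tau_N} > K_0) \le \pr(G_N^c) \to 0$. To conclude tightness of $(\bar{L}^N_{t \land \tau_N})_{N \ge 1}$, fix $\delta > 0$, pick $N_0$ with $\pr(G_N^c) < \delta$ for $N \ge N_0$, and recall that each of the finitely many variables $\bar{L}^N_{t \land \tau_N}$, $N < N_0$, is almost surely finite — this is built into the convention, used throughout the treatment of propagation of chaos, of regarding $\bar{L}^N_{\cdot \land \tau_N}$ as an element of $C([0,\infty))$ (see the set-up of Theorem \ref{thm:poc}). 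Enlarging $K_0$ to also dominate the tails of these finitely many variables gives $\limsup_{K \to \infty}\sup_N \pr(\bar{L}^N_{t \land \tau_N} > K) \le \delta$, and letting $\delta \to 0$ yields tightness. Hence $T_0 \ge t > 0$.

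As for difficulty: the chain of constraints $\pr(\xi - Z_t \le \epsilon) < p < 1$ together with $K_0 < \epsilon$ is routine bookkeeping, essentially identical to that behind \eqref{eq:lambda_est}. The genuinely new feature — and the point requiring care — is that $\bar{L}^N$ appears on both sides of the fixed-point-type estimate, so no direct bound is available; the stopping time $\sigma_N$ and the continuity of $\bar{L}^N$ up to $\tau_N$ are what make the self-improving argument work, while the empirical average $\frac{1}{N}\sum_i \bf{1}_{\{\xi_i - Z^i_t \le \epsilon\}}$ must be controlled uniformly in $N$ via $G_N$ rather than being a fixed constant as in the mean-field case.
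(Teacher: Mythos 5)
Your argument is correct, but it follows a genuinely different route from the paper's proof of this proposition. The paper applies It\^o's formula to $\frac{1}{N}\sum_i\bigl(1 - e^{-X^i_t}\bigr)$, introduces the stopping time $\varrho_N$ at which this empirical quantity drops below $\delta = \ev[1-e^{-\xi}]/2$, shows $\pr(\varrho_N \leq \delta) \to 0$, and extracts the uniform moment bound $\ev \bar{L}^N_{t \land \varrho_N} \leq \frac{1}{\delta} + \frac{1}{2}$, from which tightness follows by Markov's inequality. You instead work pathwise from the Skorokhod representation $L^i_s = \sup_{0 \le u \le s}(\xi_i + W^i_u - \bar{L}^N_u)_-$, split on $\{\xi_i - Z^i_s \le \epsilon\}$ exactly as in \eqref{eq:lambda_est}, control the empirical frequency of that event and the average running minimum on a good event $G_N$ via the law of large numbers, and close the self-referential estimate with a stopping-time bootstrap; this is essentially the technique the paper itself deploys later, in the supercritical regime, in Proposition \ref{prop:asymptotic_regularirty} and Lemma \ref{lem:lower_bound_blow_up}, here applied at time zero with $\alpha = 1$ and $\pr(\xi \le \epsilon)$ bounded away from $1$ in place of $\mu_0([0,2\eta]) < \frac{1}{2\alpha}$. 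What each approach buys: the exponential trick gives a quantitative expectation bound valid for all $N$ simultaneously and requires only $\ev[1-e^{-\xi}] > 0$, i.e.\@ $\pr(\xi = 0) < 1$, and it is the form the paper reuses to push $T_0$ to $\infty$ in the critical case of Theorem \ref{thm:poc}; your version gives a deterministic bound $K_0$ on a high-probability event rather than a moment bound, which is equally sufficient for the statement $T_0 > 0$ and has the advantage of unifying the critical argument with the supercritical machinery. The only delicate points in your write-up — interpreting $\bar{L}^N_{\tau_N}$ as the (monotone) left limit and invoking a.s.\@ finiteness of the finitely many low-$N$ variables — are shared with the paper's own conventions for $\bar{L}^N_{\cdot \land \tau_N}$, and your handling of them is consistent; the constant bookkeeping ($\pr(\xi - Z_t \le \epsilon) < p$, $K_0 < \epsilon$, $\ev Z_t = \sqrt{2t/\pi}$) and the continuity argument forcing $\sigma_N = t \land \tau_N$ are all sound.
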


\begin{proof}
We apply It\^o's formula to the process $1 - e^{-X^i_t}$ for $t \in [0, \tau_N]$ and sum over $i \in \{1, \dots, N\}$ to obtain
\begin{align} \label{eq:exp_local}
\begin{split}
    \frac{1}{N}\sum_{i = 1}^N (1 - e^{-X^i_t}) &= \frac{1}{N}\sum_{i = 1}^N (1 - e^{-\xi_i}) + \frac{1}{N}\sum_{i = 1}^N \int_0^t e^{-X^i_s} \, \d W^i_s + \frac{1}{N}\sum_{i = 1}^N\int_0^t (1 - e^{-X^i_s}) \, \d \bar{L}^N_s \\
    &\ \ \ - \frac{1}{2N} \sum_{i = 1}^N \int_0^t e^{-X^i_s} \, \d s \\
    &\geq \frac{1}{N}\sum_{i = 1}^N (1 - e^{-\xi_i}) + \frac{1}{N}\sum_{i = 1}^N \int_0^t e^{-X^i_s} \, \d W^i_s - \frac{t}{2}.
\end{split}
\end{align}
Let us set $\delta = \ev[1 - e^{-\xi}]/2 > 0$ and let $\varrho_N$ denote the first hitting time of $(-\infty, \delta]$ of the process on the right-hand side of \eqref{eq:exp_local}. Since $\frac{1}{N}\sum_{i = 1}^N (1 - e^{-\xi_i})$ tends to $\ev[1 - e^{-\xi}]$ a.s.\@ and
\begin{equation*}
    \ev\sup_{0 \leq t \leq \delta} \biggl\lvert \frac{1}{N}\sum_{i = 1}^N \int_0^t e^{-X^i_s} \, \d W^i_s\biggr\rvert^2 \to 0,
\end{equation*}
it follows that $\pr(\varrho_N \leq \delta) \to 0$ as $N \to \infty$. Now, let us rearrange the first two lines of \eqref{eq:exp_local} to find
\begin{align*}
    \frac{1}{N}\sum_{i = 1}^N\int_0^t (1 - e^{-X^i_s}) \, \d \bar{L}^N_s \leq 1 + \frac{t}{2} - \frac{1}{N}\sum_{i = 1}^N \int_0^t e^{-X^i_s} \, \d W^i_s 
\end{align*}
for $t \in [0, \varrho_N]$. Then, using that $\frac{1}{N}\sum_{i = 1}^N (1 - e^{-X^i_t}) \geq \delta$ for $t \in [0, \varrho_N]$, meaning that $\frac{1}{N}\sum_{i = 1}^N\int_0^t (1 - e^{-X^i_s}) \, \d \bar{L}^N_s \geq \delta \bar{L}^N_t$ if $t \in [0, \varrho_N]$, we obtain the upper bound $\ev \bar{L}^N_{t \land \varrho_N} \leq \frac{1}{\delta} + \frac{\ev[t \land \varrho_N]}{2 \delta} \leq \frac{1}{\delta} + \frac{1}{2}$ for $t \in [0, \delta]$. Consequently, for any $t \in [0, \delta]$ and $K > 1$, we find
\begin{align*}
    \pr\bigl(\bar{L}^N_{t \land \tau_N} \geq K\bigr) &\leq \pr\bigl(\bar{L}^N_{t \land \varrho_N} \geq K\bigr) + \pr\bigl(t\land \varrho_N < t\land \tau_N\bigr) \\
    &\leq \frac{\ev\bigl[\bar{L}^N_{t \land \varrho_N}\bigr]}{K} + \pr\bigl(\delta \land \varrho_N < \delta \land \tau_N\bigr) \\
    &\leq \frac{2 + \delta}{2\delta K} + \pr\bigl(\delta \land \varrho_N < \delta \land \tau_N\bigr).
\end{align*}
Since $\pr(\varrho_N \leq \delta) \to 0$ and $\pr(\tau_N \leq \delta) \to 0$ as $N \to \infty$, the last inequality implies that the sequence $(\bar{L}^N_{t \land \tau_N})_{N \geq 1}$ is tight for $t \in [0, \delta]$. Hence, $T_0 \geq \delta > 0$.
\end{proof}

Let us denote the space of nondecreasing c\`adl\`ag functions $[0, \infty) \to \R$ by $\cal{I}[0, \infty)$. This space can be embedded into $D[-1, \infty)$ by extending functions in $\cal{I}[0, \infty)$ to $[-1, \infty)$ by setting them to zero on $[-1, 0]$. We endow $\cal{I}[0, \infty)$ with the smallest topology that makes this embedding continuous, where $D[-1, \infty)$ is topologised by convergence in $M1$. By \cite[Theorem 12.12.2]{whitt_stochastic-process_2002} bounded subsets of $\cal{I}[0, \infty)$ are compact.

\begin{proof}[Proof of Theorem \ref{thm:poc}: critical case]
Since bounded subsets of the Polish space $\cal{I}[0, \infty)$ are compact, it follows from Proposition \ref{prop:tightness_time_marginale} that the sequence $(\bar{L}^N_{\cdot \land \tilde{\tau}_N})_{N \geq 1}$, with $\tilde{\tau}_N = \tau_N \land T_0$, is tight on $\cal{I}[0, \infty)$, where $T_0 > 0$ is as in the statement of Proposition \ref{prop:tightness_time_marginale}. Next, let us define the function $\Lambda \define [0, \infty) \times C([0, \infty)) \times \cal{I}[0, \infty) \to \cal{I}[0, \infty)$, given by $\Lambda_t(x, w, \ell) = \sup_{0 \leq s \leq t} (x + w_s - \ell_s)_-$. It is not difficult to see that $\Lambda$ is continuous (cf.\@ \cite[Theorem 13.4.1]{whitt_stochastic-process_2002})
and that for $t \in [0, \tau_N]$, $\bar{L}^N_t$ satisfies the fixed-point equation
\begin{equation*}
    \bar{L}^N_t = \bigl\langle \nu^N_t, \Lambda_t(\cdot, \cdot, \bar{L}^N_{\cdot \land t})\bigr\rangle,
\end{equation*}
where $\nu^N_t = \frac{1}{N}\sum_{i = 1}^N \delta_{\xi_i, W^i_{\cdot \land t}}$ and $\langle \cdot, \cdot \rangle$ denotes the pairing between a measure and an integrable function defined on the same measurable space. From the above, we know that $(\bar{L}^N_{\cdot \land \tilde{\tau}_N})_{N \geq 1}$ is tight on $\cal{I}[0, \infty)$ and $(\nu^N)_{N \geq 1}$ converges a.s.\@ to $\nu = \L(\xi_1, W^1)$ on $\P(C([0, \infty)))$ by the law of large numbers. Hence, we can extract a subsequence that converges weakly to some $(\nu, \bar{L})$ on $\P(C([0, \infty))) \times \cal{I}[0, \infty)$. Since $\Lambda$ is continuous and $\tau_N \to \infty$ as $N \to \infty$, it follows from the continuous mapping theorem that
\begin{equation*}
    \bar{L}_t = \bigl\langle \nu, \Lambda_t(\cdot, \cdot, \bar{L}_{\cdot \land t})\bigr\rangle
\end{equation*}
a.s.\@ for $t \in [0, T_0)$. Hence, almost surely, $\bar{L}$ is a solution to SDE \eqref{eq:non_linear_skorokhod} on $[0, T_0)$. Since SDE \eqref{eq:non_linear_skorokhod} has a unique solution $\ell$ by Theorem \ref{thm:exist_unique}, it follows that $\bar{L} = \ell$ almost surely. The same is true for the limit of any other convergent subsequence of $(\bar{L}^N_{\cdot \land \tilde{\tau}_N})_{N \geq 1}$, so weak convergence of $(\bar{L}^N_{\cdot \land \tilde{\tau}_N})_{N \geq 1}$ to $\ell$ on $\cal{I}[0, \infty)$ holds along the full sequence. Moreover, since $\ell$ is continuous, convergence actually holds on the space $C([0, \infty))$.

To complete the proof, we wish to show that $T_0 = \infty$. Assume otherwise that $T_0 < \infty$ and let $(X, L)$ be the solution of the Skorokhod problem associated to $\ell$. We choose $\delta > 0$ small enough such that $\ev[1 - e^{-X_{T_0 - \delta/2}}]/2 \geq \delta$. Then, since $\bar{L}^N_{T_0 - \delta/2} \Rightarrow \ell_{T_0 - \delta/2}$ as $N \to \infty$, it holds that
\begin{equation*}
    \frac{1}{N} \sum_{i = 1}^N \bigl(1 - e^{-X_{T_0 - \delta/2}}\bigr) \Rightarrow \ev\bigl[1 - e^{-X_{T_0 - \delta/2}}\bigr].
\end{equation*}
Thus, proceeding similarly as in the proof of Proposition \ref{prop:tightness_time_marginale}, we can show that $(\bar{L}^N_{t \land \tau_N})_{N \geq 0}$ is tight for all $t \in [T_0 - \delta/2, T_0 + \delta/2]$, in contradiction to the maximality of $T_0$. Consequently, $T_0$ must equal $\infty$. This concludes the proof.
\end{proof}

\subsubsection{Supercritical Regime}

In the supercritical regime, $\alpha > 1$, we have to work less hard to prove that $(\bar{L}^N_{\cdot \land \tau_N})_{N \geq 1}$ is tight on $\cal{I}[0, \infty)$. However, it is more challenging to show that $\tau_N$ converges to the deterministic breakdown time $T$ of the solution $(\ell, T)$ of SDE \eqref{eq:non_linear_skorokhod}. We start by analysing the regularity of $\bar{L}^N_t$ as a function of the mass near the origin at time $t$.

\begin{proposition} \label{prop:asymptotic_regularirty}
Let $T_0 \geq 0$ and assume that $\mu^N_{T_0}$ converges weakly to some deterministic $\mu_0 \in \P([0, \infty))$ such that $\mu_0([0, 2\eta]) < \frac{1}{2\alpha}$ for some $\eta > 0$. Then, for all $\epsilon > 0$, there exists $\delta > 0$ only depending on $\eta$, $\mu_0([0, 2\eta])$, and $\epsilon$ such that
\begin{equation} \label{eq:asymptotic_regularirty}
    \lim_{N \to \infty} \pr\Bigl(\bar{L}^N_{(T_0 + \delta) \land \tau_N} - \bar{L}^N_{T_0 \land \tau_N} > \epsilon\Bigr) = 0.
\end{equation}
\end{proposition}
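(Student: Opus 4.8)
The plan is to run the same contraction-style argument that underpins Propositions \ref{prop:small_time_existence} and \ref{prop:uniqueness}, but now applied uniformly to the empirical system, extracting from the weak convergence $\mu^N_{T_0} \Rightarrow \mu_0$ a uniform-in-$N$ bound on the mass near the origin that persists for a short deterministic time. First I would fix the data: set $\beta = (\mu_0([0,2\eta]) + \tfrac{1}{2\alpha})^{-1}$, so that $\alpha < \beta$ (since $\mu_0([0,2\eta]) < \tfrac{1}{2\alpha}$ forces $\mu_0([0,2\eta]) + \tfrac{1}{2\alpha} < \tfrac{1}{\alpha}$). Using the Portmanteau theorem and the fact that $x \mapsto \mathbf{1}_{\{x \le 2\eta\}}$ has a $\mu_0$-null boundary (or, if not, replacing $2\eta$ by a slightly larger continuity point), we get $\limsup_{N} \pr\bigl(\text{a given particle started from }\mu^N_{T_0}\text{ lies in }[0,\eta]\bigr)$ controlled; more precisely, for the conditional empirical measure $\mu^N_{T_0}$ we obtain that on an event of probability tending to $1$, the fraction of particles with $X^i_{T_0 \land \tau_N} \le 2\eta$ is at most $\beta^{-1} - \kappa$ for some $\kappa > 0$. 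This is the replacement for the condition $\pr(\xi \le \epsilon) < 1/\beta$ used in Proposition \ref{prop:small_time_existence}.

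Next I would choose $\delta > 0$ small, depending only on $\eta$, $\mu_0([0,2\eta])$, and $\epsilon$, such that $\alpha c \sqrt{\delta} < \tfrac{1}{2}(\text{spare room})\,\epsilon$ where $c = (2/\pi)^{1/2}$, mirroring the choice $T \le \bigl(\tfrac{\beta-\alpha}{\alpha\beta}\tfrac{\epsilon}{c}\bigr)^2 \wedge \epsilon$ in Proposition \ref{prop:small_time_existence}, and also small enough that $\pr\bigl(\sup_{0 \le s \le \delta} |W_s| > \eta\bigr)$ is tiny — this guarantees that on $[T_0, T_0 + \delta]$ a particle starting above $2\eta$ contributes nothing to the increment of its local time, and a particle starting above $\eta$ can only be at the origin if its Brownian increment has dipped below $-\eta$. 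Writing $\bar L^N_{(T_0+t)\land\tau_N} - \bar L^N_{T_0 \land \tau_N} = \tfrac1N\sum_i (L^i_{(T_0+t)\land\tau_N} - L^i_{T_0\land\tau_N})$ and using the Skorokhod representation $L^i_{T_0+t} - L^i_{T_0} = \sup_{T_0 \le s \le T_0+t}\bigl(X^i_{T_0} + (W^i_s - W^i_{T_0}) - \alpha(\bar L^N_s - \bar L^N_{T_0})\bigr)_-$, one estimates exactly as in \eqref{eq:lambda_est}:
\begin{equation*}
    \bar L^N_{(T_0+t)\land\tau_N} - \bar L^N_{T_0\land\tau_N} \le c\sqrt{t} + \alpha\,\Bigl(\tfrac1N\#\{i : X^i_{T_0\land\tau_N} \le 2\eta\}\Bigr)\bigl(\bar L^N_{(T_0+t)\land\tau_N} - \bar L^N_{T_0\land\tau_N}\bigr) + R^N_t,
\end{equation*}
where $R^N_t$ collects the exceptional event $\{\sup|W^i| > \eta\}$-contributions and vanishes in probability as $N\to\infty$. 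On the high-probability event that the empirical fraction near the origin is below $\beta^{-1}-\kappa < \alpha^{-1}$, rearranging gives $\bar L^N_{(T_0+\delta)\land\tau_N} - \bar L^N_{T_0\land\tau_N} \lesssim \tfrac{1}{1 - \alpha(\beta^{-1}-\kappa)} (c\sqrt{\delta} + R^N_\delta)$, which is $\le \epsilon$ for $\delta$ small and $N$ large, yielding \eqref{eq:asymptotic_regularirty}.

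The main obstacle is the book-keeping for the ``exceptional'' particles: because the increment of $\bar L^N$ feeds back into every particle's drift through the $-\alpha \bar L^N$ term, a crude union bound over the exceptional events $\{\sup_{[T_0,T_0+\delta]}|W^i| > \eta\}$ is not enough — one needs that their \emph{average} contribution to $\bar L^N$ is small, which follows because each such $L^i$ increment is itself bounded (via the one-dimensional Skorokhod map) by $\sup|W^i - W^i_{T_0}| + \alpha(\bar L^N_{T_0+\delta} - \bar L^N_{T_0})$, so a self-consistent Grönwall-type absorption of the $\alpha(\bar L^N_{T_0+\delta}-\bar L^N_{T_0})$ term is required, together with a law-of-large-numbers control of $\tfrac1N\sum_i \sup|W^i - W^i_{T_0}|\mathbf{1}_{\{\sup|W^i - W^i_{T_0}|>\eta\}}$, which tends to $\ev[\sup|W| \mathbf{1}_{\{\sup|W|>\eta\}}]$ and can be made small by shrinking $\delta$. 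The delicate point, as always in this paper, is that we must phrase everything in terms of $t \land \tau_N$ so that the estimates are valid up to the (random) breakdown time; since $\tau_N$ is exactly the first time $\#I_t \ge N/\alpha$, i.e. the first time the empirical mass at the origin reaches $1/\alpha$, and our event keeps that mass strictly below $\beta^{-1} - \kappa < 1/\alpha$ throughout $[T_0, (T_0+\delta)\land\tau_N]$, the stopping causes no additional trouble.
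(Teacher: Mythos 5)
Your overall strategy is the same as the paper's: use the Skorokhod representation for the increments $L^i_{T_0+t}-L^i_{T_0}$, split particles according to whether they start near the origin (or see an exceptional Brownian dip), control the empirical fraction of such particles via $\mu^N_{T_0}\Rightarrow\mu_0$ and the law of large numbers with the same constant $\beta=(\mu_0([0,2\eta])+\tfrac{1}{2\alpha})^{-1}$, and close a self-consistent inequality for $\bar L^N_{T_0+t}-\bar L^N_{T_0}$ by rearrangement. However, there is a genuine gap at the central step. You assert that, on the event where the Brownian oscillation over $[T_0,T_0+\delta]$ stays below $\eta$, ``a particle starting above $2\eta$ contributes nothing to the increment of its local time,'' and your displayed inequality encodes exactly this. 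That claim is false as stated: such a particle is also dragged down by the common term $-\alpha(\bar L^N_s-\bar L^N_{T_0})$, so it can reach the origin and accumulate local time as soon as $\alpha(\bar L^N_s-\bar L^N_{T_0})$ exceeds $\eta$ — and bounding $\bar L^N_s-\bar L^N_{T_0}$ is precisely what you are trying to prove, so the argument is circular. Choosing $\delta$ with $\alpha c\sqrt\delta$ small does not break the circle, because a priori the increment of $\bar L^N$ over $[T_0,T_0+\delta]$ need not be of order $\sqrt\delta$ (in the supercritical regime it is exactly the possibility of a fast build-up that is at stake).

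The paper closes this loop explicitly: instead of discarding the far-from-origin particles, it bounds their contribution by $\bigl(\alpha(\bar L^N_{T_0+t}-\bar L^N_{T_0})-\eta\bigr)_+$, which after summation gives
\begin{equation*}
(1-\alpha p^N_t)\bigl(\bar L^N_{T_0+t}-\bar L^N_{T_0}\bigr)\le\Bigl(\alpha\bigl(\bar L^N_{T_0+t}-\bar L^N_{T_0}\bigr)-\eta\Bigr)_+ + \bar Z^N_t,
\end{equation*}
and then runs a continuity/first-exit (bootstrap) argument with the stopping time $\varrho_N=\inf\{t>0: \bar L^N_{T_0+t}-\bar L^N_{T_0}>\eta/\alpha\}\land(\tau_N-T_0)\land\delta_0$: as long as the increment has not exceeded $\eta/\alpha$ the positive-part term vanishes, so the increment is at most $\tfrac{\beta}{\beta-\alpha}\bar Z^N_t$, and this self-improves to hold up to time $\delta$ provided $\bar Z^N$ stays below $\tfrac{\eta(\beta-\alpha)}{\alpha\beta}$, an event whose probability tends to one for $\delta$ small by the law of large numbers. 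Your remark about a ``self-consistent Gr\"onwall-type absorption'' is aimed at the exceptional particles only and does not supply this bootstrap for the main term; note also that a Gr\"onwall mechanism is not the right tool here, since the feedback enters through a threshold $(\cdot-\eta)_+$ rather than through an integrable linear coefficient. Finally, your claim that the remainder $R^N_t$ ``vanishes in probability as $N\to\infty$'' is inaccurate for fixed $\delta$ (its limit is of the order of $\ev\bigl[\sup_{0\le s\le\delta}\lvert W_s\rvert\,\bf{1}_{\{\sup_{0\le s\le\delta}\lvert W_s\rvert>\eta\}}\bigr]$ plus a feedback term); as you yourself note later, it is only made small by shrinking $\delta$, so the earlier statement should be corrected for consistency. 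With the bootstrap inserted and the remainder handled as in the paper, your argument would go through and would essentially coincide with the paper's proof.
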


\begin{proof}
Let us define $Z^i_t = \sup_{0 \leq s \leq t} (W^i_{T_0} - W^i_{T_0 + s})$. Then, by definition of $L^i$, on $\{T_0 \leq \tau_N\}$, we have that for all $t \in [0, \tau_N - T_0]$ that
\begin{align*}
    L^i_{T_0 + t} - L^i_{T_0} &= \sup_{0 \leq s \leq t} \Bigl(X^i_{T_0} + (W^i_{T_0 + s} - W^i_{T_0}) - \alpha (\bar{L}^N_{T_0 + s} - \bar{L}^N_{T_0})\Bigr)_- \\
    &\leq \bf{1}_{\{X^i_{T_0} - Z^i_t \geq \eta\}} \bigl(\alpha (L^i_{T_0 + t} - L^i_{T_0}) - \eta\bigr)_+ \\
    &\ \ \ + \bf{1}_{\{X^i_{T_0} - Z^i_t < \eta\}} \bigl(Z^i_t + \alpha (L^i_{T_0 + t} - L^i_{T_0})\bigr).
\end{align*}
Summing both sides of the above inequality over $i = 1$,~\ldots, $N$ and rearranging implies
\begin{equation} \label{eq:regularity_bound_emp}
    (1 - \alpha p^N_t) \bigl(\bar{L}^N_{T_0 + t} - \bar{L}^N_{T_0}\bigr) \leq \Bigl(\alpha \bigl(\bar{L}^N_{T_0 + t} - \bar{L}^N_{T_0}\bigr) - \eta\Bigr)_+ + \bar{Z}^N_t,
\end{equation}
where $p^N_t = \frac{1}{N}\sum_{i = 1}^N \bf{1}_{\{X^i_{T_0} - Z^i_t < \eta\}}$ and $\bar{Z}^N_t = \frac{1}{N} \sum_{i = 1}^N Z^i_t$. Now, since $\mu^N_{T_0} \Rightarrow \mu_0$, it follows from the law of large numbers that $p^N_t \Rightarrow p_t = \int_{[0, \infty)} \pr\bigl(x - Z_t < \eta\bigr) \, \d \mu_0(x)$, where $Z_t = \sup_{0 \leq s \leq t}(-W_s)$. We estimate
\begin{align*}
    p_t &= \int_{[0, \infty)} \Bigl(\pr\bigl(x - Z_t < \eta,\, Z_t > \eta\bigr) + \pr\bigl(x - Z_t < \eta,\, Z_t \leq \eta\bigr)\Bigr)\, \d \mu_0(x) \\
    &\leq \pr(Z_t > \eta) + \mu_0([0, 2\eta]).
\end{align*}
Hence, choosing $\delta_0$ small enough such that $\pr(Z_t > \eta) < \frac{1}{2\alpha}$ for $t \in [0, \delta_0]$, we can guarantee that for all $t \in [0, \delta_0]$ it holds that $p_t < \frac{1}{2\alpha} + \mu_0([0, 2\eta]) < \frac{1}{\alpha}$. Consequently, setting $\beta = (\frac{1}{2\alpha} + \mu_0([0, 2\eta]))^{-1} > \alpha$, it holds that $\pr(p^N_t \leq 1/\beta) \to 1$ as $N \to \infty$. Next, since $p^N_t$ is nondecreasing in $t$, on $\{p^N_{\delta_0} \leq 1/\beta\}$, we have that $1 - \alpha p^N_t \geq 1 - \frac{\alpha}{\beta}$ for $t \in [0, \delta_0]$. Thus, in view of \eqref{eq:regularity_bound_emp}, we obtain
\begin{equation*}
    \bigl(\bar{L}^N_{T_0 + t} - \bar{L}^N_{T_0}\bigr) \leq \frac{\beta}{\beta - \alpha}\Bigl(\alpha \bigl(\bar{L}^N_{T_0 + t} - \bar{L}^N_{T_0}\bigr) - \eta\Bigr)_+ +  \frac{\beta}{\beta - \alpha}\bar{Z}^N_t,
\end{equation*}
for $t \in [0, (\tau_N - T_0) \land \delta_0]$ on $\{T_0 \leq \tau_N,\, p^N_{\delta_0} \leq 1/\beta\}$. 

Now, we claim that as long as $\bar{Z}^N_t \leq \frac{\eta (\beta - \alpha)}{\alpha \beta}$, it holds that $\bar{L}^N_{T_0 + t} - \bar{L}^N_{T_0} \leq \frac{\beta}{\beta - \alpha} \bar{Z}^N_t$. Indeed, let
\begin{equation*}
    \varrho_N = \inf\Bigl\{t > 0 \define \bar{L}^N_{T_0 + t} - \bar{L}^N_{T_0} > \frac{\eta}{\alpha}\Bigr\} \land (\tau_N - T_0) \land \delta_0,
\end{equation*}
so that $\bar{L}^N_{T_0 + t} - \bar{L}^N_{T_0} \leq \frac{\beta}{\beta - \alpha} \bar{Z}^N_t$ for $t \in [0, \varrho_N]$. This implies that $\bar{L}^N_{T_0 + t} - \bar{L}^N_{T_0} \leq \frac{\eta}{\alpha}$ for $t \in [0, \varrho_N]$ whenever $\bar{Z}^N_t \leq \frac{\eta (\beta - \alpha)}{\alpha \beta}$. In other words, $\varrho_N \geq \inf\{t > 0 \define \bar{Z}^N_t > \frac{\eta (\beta - \alpha)}{\alpha \beta}\} \land (\tau_N - T_0) \land \delta_0$. Consequently, if $t \in [0, \land (\tau_N - T_0) \land \delta_0]$ is such that $\bar{Z}^N_t \leq \frac{\eta (\beta - \alpha)}{\alpha \beta}$, we have $\bar{L}^N_{T_0 + t} - \bar{L}^N_{T_0} \leq \frac{\beta}{\beta - \alpha} \bar{Z}^N_t$, which proves the claim. Putting everything together yields
\begin{align*}
    \pr\bigl(&\bar{L}^N_{(T_0 + t) \land \tau_N} - \bar{L}^N_{T_0 \land \tau_N} \leq \epsilon\bigr) \\
    &\geq \pr\biggl(\bar{L}^N_{(T_0 + t) \land \tau_N} - \bar{L}^N_{T_0 \land \tau_N} \leq \frac{\beta}{\beta - \alpha} \bar{Z}^N_{t \land (\tau_N - T_0)},\, \bar{Z}^N_{t \land (\tau_N - T_0)} \leq \frac{\beta - \alpha}{\beta} \biggl(\epsilon \land \frac{\eta}{\alpha}\biggr)\biggr) \\
    &\geq \pr\bigl(p^N_{\delta_0} \leq 1/\beta\bigr) - \pr\biggl(\bar{Z}^N_t > \frac{\beta - \alpha}{\beta} \biggl(\epsilon \land \frac{\eta}{\alpha}\biggr)\biggr) 
\end{align*}
for $t \in [0, \delta_0]$. Clearly, by choosing $\delta \in (0, \delta_0]$ sufficiently small, depending on $\eta$, $\beta$ (and therefore $\mu_0([0, 2\eta])$, and $\epsilon$, we can guarantee that the last expression on the right-hand side above vanishes in the limit as $N \to \infty$. Since we already know that $\pr(p^N_{\delta_0} \leq 1/\beta) \to 1$, we have deduced \eqref{eq:asymptotic_regularirty}.
\end{proof}

We will apply Proposition \ref{prop:asymptotic_regularirty} with $T_0 = 0$ to show that $\tau_N$ is asymptotically bounded away from zero.

\begin{lemma} \label{lem:lower_bound_blow_up}
Assume that $\pr(\xi = 0) < \frac{1}{\alpha}$ and let $\tilde{T}$ be the supremum over all $t \geq 0$ such that $\lim_{N \to \infty} \pr(\tau_N > t) = 1$. Then $\tilde{T} > 0$.
\end{lemma}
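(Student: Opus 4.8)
The plan is to apply Proposition~\ref{prop:asymptotic_regularirty} with $T_0 = 0$ to control the increment of the empirical reflection term $\bar L^N$ over a short deterministic time interval, and then to turn this into an upper bound on the number of particles at the origin that is strictly below the critical level $N/\alpha$.

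First, the empirical measure $\mu^N_0 = \frac1N\sum_{i=1}^N\delta_{\xi_i}$ converges weakly almost surely to $\mu_0 = \L(\xi)$ by the law of large numbers, and since $\pr(\xi = 0) < 1/\alpha$ we may fix $\eta > 0$ with $\mu_0([0,2\eta]) < 1/\alpha$. This is exactly the regime in which the argument behind Proposition~\ref{prop:asymptotic_regularirty} runs at $T_0 = 0$: the bound $1/(2\alpha)$ appearing there is a convenient but not sharp sufficient condition, and when only $\mu_0([0,2\eta]) < 1/\alpha$ is known one shrinks the time horizon so as to absorb the slack $1/\alpha - \mu_0([0,2\eta])$. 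Thus, for every $\epsilon > 0$ there is $\delta_\epsilon > 0$ with $\pr(\bar L^N_{\delta_\epsilon \wedge \tau_N} > \epsilon) \to 0$ as $N \to \infty$, and since $\bar L^N$ is nondecreasing the same holds after replacing $\delta_\epsilon$ by any smaller horizon.

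Next comes the deterministic backbone: for $t \in [0,\tau_N]$ --- using the continuous extension of $\bf{X}$ and the monotone extension of $\bar L^N$ up to $\tau_N$ --- the identity $X^i_t = \xi_i + W^i_t - \alpha\bar L^N_t + L^i_t$ together with $L^i_t \ge 0$ shows that $X^i_t = 0$ implies $\xi_i + \inf_{0\le s\le t}W^i_s \le \alpha\bar L^N_t$. Hence, on $\{\bar L^N_{\delta\wedge\tau_N}\le\epsilon\}$ and for every $t \le \delta\wedge\tau_N$, one has
\[
\# I_t \;\le\; \#\Bigl\{i \in \bf{N} : \xi_i + \inf_{0\le s\le\delta}W^i_s \le \alpha\epsilon\Bigr\} \;=:\; M^{\epsilon,\delta}_N .
\]
By the law of large numbers $M^{\epsilon,\delta}_N/N \to \pr(\xi + \inf_{0\le s\le\delta}W_s \le \alpha\epsilon)$ almost surely; this quantity is nondecreasing in $\delta$ and tends to $\pr(\xi = 0) < 1/\alpha$ as $\epsilon,\delta\to 0$. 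So we may fix $\epsilon_0 > 0$ and then $\delta \in (0,\delta_{\epsilon_0}]$ with $\pr(\xi + \inf_{0\le s\le\delta}W_s \le \alpha\epsilon_0) < 1/\alpha$, giving $\pr(M^{\epsilon_0,\delta}_N < N/\alpha) \to 1$.

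On the event $\{\bar L^N_{\delta\wedge\tau_N}\le\epsilon_0\}\cap\{M^{\epsilon_0,\delta}_N < N/\alpha\}$ we must have $\tau_N > \delta$: if instead $\tau_N \le \delta$, then Theorem~\ref{thm:exist_unique_ps}\,\ref{it:maximality_cond_ps} (in the present symmetric setting $\rho(Q_t) = \alpha\#I_t/N$) would force $\#I_{\tau_N} \ge N/\alpha$, contradicting $\#I_{\tau_N} \le M^{\epsilon_0,\delta}_N < N/\alpha$. Since both events have probability tending to one, $\pr(\tau_N > \delta) \to 1$ and therefore $\tilde T \ge \delta > 0$. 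I expect the main difficulty to lie entirely in the first step --- establishing and correctly invoking the uniform-in-$N$ control on the growth of $\bar L^N$ near time zero from Proposition~\ref{prop:asymptotic_regularirty}, together with checking its hypotheses at $T_0=0$ under the assumption $\pr(\xi = 0) < 1/\alpha$; the remaining steps are the elementary inclusion above combined with routine law-of-large-numbers estimates.
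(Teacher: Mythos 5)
Your proof is correct and follows essentially the same route as the paper's: an application of Proposition~\ref{prop:asymptotic_regularirty} at $T_0 = 0$ to control $\bar{L}^N$ on a short horizon, a law-of-large-numbers bound on the fraction of particles with $\xi_i + \inf_{0 \leq s \leq \delta} W^i_s$ small (your $M^{\epsilon,\delta}_N/N$ is exactly the paper's $p^N_\delta$ with a relabelled threshold), and the breakdown criterion $\#I_{\tau_N} \geq N/\alpha$ to conclude $\tau_N > \delta$ with probability tending to one. Your explicit remark that the hypothesis $\mu_0([0,2\eta]) < \frac{1}{2\alpha}$ of Proposition~\ref{prop:asymptotic_regularirty} can be relaxed to $\mu_0([0,2\eta]) < \frac{1}{\alpha}$ by shrinking the time horizon is precisely the point the paper handles by invoking the proof, rather than the statement, of that proposition.
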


\begin{proof}
Let us fix $\eta > 0$ sufficiently small such that $\pr(\xi \leq 2\eta) < \frac{1}{\alpha}$, which exists since $\pr(\xi = 0) < \frac{1}{\alpha}$. Then, with $p^N_t$ as defined in the proof of Proposition \ref{prop:asymptotic_regularirty}, we have that $\# I_s < \frac{N}{\alpha}$ for $s \in [0, t]$ whenever $p^N_t < \frac{1}{\alpha}$ and $\bar{L}^N_t \leq \frac{\eta}{2\alpha}$ for $t \in [0, \tau_N]$, where we recall that $I_s = \{i \in \bf{N} \define X^i_s = 0\}$. Since $\# I_{\tau_N} \geq N/\alpha$, the former in turn implies that $\tau_N > t$ if $p^N_t < \frac{1}{\alpha}$ and $\bar{L}^N_t \leq \frac{\eta}{2\alpha}$. Consequently, we have
\begin{equation*}
    \pr(\tau_N > t) \geq \pr\Bigl(p^N_{t \land \tau_N} < \frac{1}{\alpha},\, \bar{L}^N_{t \land \tau_N} \leq \frac{\eta}{2 \alpha}\Bigr) \geq \pr\Bigl(p^N_t < \frac{1}{\alpha}\Bigr) - \pr\Bigl(\bar{L}^N_{t \land \tau_N} > \frac{\eta}{2\alpha}\Bigr). 
\end{equation*}
Now, since $\frac{1}{N}\sum_{i = 1}^N \delta_{\xi_i} \Rightarrow \L(\xi)$ by the law of large numbers, we can apply Proposition \ref{prop:asymptotic_regularirty} with $T_0 = 0$ and $\epsilon = \frac{\eta}{2\alpha}$, to see that the right-hand side above tends to $1$ as $N \to \infty$ for $t = \delta$, where $\delta > 0$ is the number from statement of Proposition \ref{prop:asymptotic_regularirty} and $\pr(p^N_t < 1/\alpha) \to 0$ is established in the proof of that proposition using that $\pr(\xi \leq 2\eta) < 1/\alpha$. But $\pr(\tau_N > \delta) \to 1$ implies that $\tilde{T} \geq \delta >0$, which concludes the proof.
\end{proof}

\begin{proof}[Proof of Theorem \ref{thm:poc}: supercritical case]
By rearranging the equation
\begin{equation*}
    0 \leq \bar{X}^N_{t \land \tau_N} = \frac{1}{N}\sum_{i = 1}^N (\xi_i + W^i_{t \land \tau_N}) + (1 - \alpha) \bar{L}^N_{t \land \tau_N}
\end{equation*}
and taking expectations on both sides, we see that $\ev\bar{L}^N_{t \land \tau_N} \leq \ev \xi_1/(\alpha - 1)$, so that the sequence $(\bar{L}^N_{t \land \tau_N})_{N \geq 1}$ is tight for all $t \geq 0$. Consequently, the sequence $(\bar{L}^N_{\cdot \land \tilde{\tau}_N}, \tilde{\tau}_N)_{N \geq 1}$ is tight on $\cal{I}[0, \infty)$, where $\tilde{\tau}_N = \tau_N \land (T + 1)$ and $(\ell, T)$ denotes the unique maximal solution to SDE \eqref{eq:non_linear_skorokhod}. As in the proof of the critical case of Theorem \ref{thm:poc}, we can show that any subsequential limit $(\bar{L}, \tau)$ of $(\bar{L}^N_{\cdot \land \tilde{\tau}_N}, \tilde{\tau}_N)_{N \geq 1}$ is a.s.\@ a solution to SDE \eqref{eq:non_linear_skorokhod}. This in particular implies that $\tau \leq T$ and $\bar{L}_t = \ell_t$ for $t \in [0, \tau)$ almost surely. Now, if $\pr(\xi = 0) \geq 1/\alpha$, then $T = 0$, so that $\tau = 0$. Hence, every subsequential limit of $(\tilde{\tau}_N)_N$ vanishes. Since $\tilde{\tau}_N = \tau_N \land 1$, the same is true for $(\tau_N)_N$, so that $\tau_N \Rightarrow 0 = T$. The convergence statement for $\bar{L}^N_{\cdot \land \tau_N}$ is empty in this case, since the limit breaks down immediately. Next, let us consider the case $\pr(\xi = 0) < 1/\alpha$. Lemma \ref{lem:lower_bound_blow_up} tells us that $\tau \geq \tilde{T} > 0$ almost surely. Moreover, because the limit $\ell$ of $\bar{L}^N_{\cdot \land \tau_N}$ is unique, we can deduce that $(\bar{L}^N_{\cdot \land \tau_N})_{N \geq 1}$ weakly converges along the whole sequence on $[0, \tilde{T})$. Lastly, since $\ell$ is continuous, the convergence actually holds on the space $C([0, \infty))$.

Now, assume that $\pr(\tau = T) < 1$, meaning that $\tilde{T} < T$. Let $(X, L)$ be the solution of the Skorokhod problem associated to $(\ell, T)$. From the weak convergence of $(\bar{L}^N)_{N \geq 1}$ to $\ell$ on $[0, \tilde{T})$, it follows that $\mu^N_t = \frac{1}{N}\sum_{i = 1}^N \delta_{X^i_t}$ converges weakly to $\L(X_t)$ on $\P([0, \infty))$ for $t \in [0, \tilde{T})$. Let us fix $\eta > 0$ such that $\pr(X_{\tilde{T}-} \leq 2 \eta) < 1/\alpha$. Such $\eta$ exists by Theorem \ref{thm:exist_unique} because $\tilde{T} < T$ by assumption. Then by the Portmanteau theorem, we see that $\lim_{t \searrow \tilde{T}} \pr(X_t \leq 2\eta) \leq \pr(X_{\tilde{T}} \leq 2 \eta) < 1/\alpha$. Hence, we can find $\tilde{T}_0 \in [0, \tilde{T})$ and $\beta > \alpha$ such that for any $t \in [\tilde{T}_0, \tilde{T})$ it holds that $\pr(X_t \leq 2\eta) \leq 1/\beta < 1/\alpha$. Next, arguing similarly as in the proof of Lemma \ref{lem:lower_bound_blow_up}, we can show that for any $t \in [\tilde{T}_0, \tilde{T})$ and $s \geq 0$, we have
\begin{align*}
    \pr(\tau_N > t + s) \geq \pr\Bigl(p^N_s < \frac{1}{\alpha}\Bigr) - \pr\Bigl(\bar{L}^N_{(t + s) \land \tau_N} - \bar{L}^N_{t \land \tau_N} > \frac{\eta}{2\alpha}\Bigr) - \pr(\tau_N \leq t).
\end{align*}
Consequently, applying Proposition \ref{prop:asymptotic_regularirty} with $T_0 = t$ and $\epsilon = \frac{\eta}{2\alpha}$, allows us to find $\delta > 0$ such that $\lim_{N \to \infty} \pr(\tau_N > t + \delta) = 1$. The choice of $\delta$ in the statement of Proposition \ref{prop:asymptotic_regularirty} can be made uniformly in $t \in [\tilde{T}_0, \tilde{T})$ owing to the uniform bound $\sup_{s \in [\tilde{T}_0, \tilde{T})}\pr(X_s \leq 2\eta) \leq \frac{1}{\beta} < \frac{1}{\alpha}$. But then for $t$ sufficiently close to $\tilde{T}$, we have $t + \delta > \tilde{T}$ and $\lim_{N \to \infty} \pr(\tau_N > t + \delta) = 1$. This violates the maximality of $\tilde{T}$. Thus, we can conclude $\pr(\tau = T) = 1$.
\end{proof}

\section{Stationary and Self-Similar Distributions} \label{sec:stationary}

In this section, we treat the stationary and self-similar distributions of McKean--Vlasov SDE \eqref{eq:non_linear_skorokhod}. We will prove Theorem \ref{thm:mfstationary}, splitting the discussion into the two cases $\alpha = 1$ and $\alpha < 1$. When $\alpha=1$, the family of exponential distributions is invariant for the dynamics of \eqref{eq:non_linear_skorokhod}. An invariant distribution in this family is selected by the mean of the initial condition. The self-similar solutions presented in the second part of the proof occur when $\alpha<1$ and the initial condition is identically zero.

\subsection{Convergence to the Stationary Distribution for \texorpdfstring{$\alpha = 1$}{alpha equals 1}}

We begin with a sequence of auxiliary results that will be useful both in establishing the convergence to the stationary distribution and the self-similar profile.

\begin{lemma} \label{lem:ito_for_mfl}
Let $\xi_1$ and $\xi_2$ be two nonnegative random variables with finite second moment. For $i = 1$, $2$, let $(X^i, L^i)$ be the solution of the Skorokhod problem associated with the solution $(\ell^i, T_i)$ to McKean--Vlasov SDE \eqref{eq:non_linear_skorokhod} with initial condition $\xi_i$. Then
\begin{align} \label{eq:ito_l2}
\begin{split}
    \ev\lvert X^1_t - X^2_t\rvert^2 &= \ev\lvert \xi_1 - \xi_2\rvert^2 - 2\alpha \ev[\xi_1 - \xi_2] (\ell^1_t - \ell^2_t) - \alpha(1 - \alpha) (\ell^1_t - \ell^2_t)^2 \\
    &\ \ \ - 2 \ev\biggl[\int_0^t X^2_s \, \d L^1_s + \int_0^t X^1_s \, \d L^2_s\biggr].
\end{split}
\end{align}
for $t \in [0, T_1 \land T_2)$.
\end{lemma}

\begin{proof}
By It\^o's formula, we have that
\begin{align} \label{eq:ito_for_square}
\begin{split}
    \ev \lvert X^1_t - X^2_t\rvert^2 &= \ev \lvert \xi_1 - \xi_2\rvert^2 - 2\alpha \int_0^t \ev[X^1_s - X^2_s] \, \d (\ell^1_s - \ell^2_s) \\
    &\ \ \ + 2 \ev\int_0^t (X^1_s - X^2_s) \, \d (L^1_s - L^2_s). 
\end{split}
\end{align}
Since $\ev[X^i_s - X^i_s] = \ev[\xi_1 - \xi_2] + (1 - \alpha)(\ell^1_s - \ell^2_s)$, the second term on the right-hand side equals
\begin{align} \label{eq:aux_display}
\begin{split}
    -2\alpha \ev[\xi_1 - \xi_2] (&\ell^1_t - \ell^2_t) - 2\alpha(1 - \alpha) \int_0^t (\ell^1_s - \ell^2_s) \, \d (\ell^1_s - \ell^2_s) \\
    &= - 2\alpha \ev[\xi_1 - \xi_2] (\ell^1_t - \ell^2_t) - \alpha(1 - \alpha) (\ell^1_t - \ell^2_t)^2
\end{split}
\end{align}
Developing the third term, we note that $\int_0^t X^i_s \, \d L^i_s = \int_0^t X^i_s \bf{1}_{\{X^i_s > 0\}} \, \d L^i_s = 0$, so $\int_0^t (X^1_s - X^2_s) \, \d (L^1_s - L^2_s) = -\int_0^t X^2_s \, \d L^1_s - \int_0^t X^1_s \, \d L^2_s$. Inserting this expression and \eqref{eq:aux_display} into Equation \eqref{eq:ito_for_square} above yields \eqref{eq:ito_l2}.
\end{proof}

Note that if $\xi_1$ and $\xi_2$ have the same mean and $\alpha = 1$, then \eqref{eq:ito_l2} simplifies to
\begin{equation} \label{eq:ito_l2_critical}
    \ev\lvert X^1_t - X^2_t\rvert^2 = \ev\lvert \xi_1 - \xi_2\rvert^2 - 2 \ev\biggl[\int_0^t X^2_s \, \d L^1_s + \int_0^t X^1_s \, \d L^2_s\biggr].
\end{equation}

Next, we want to show that the integral $\ev[\int_0^t X^2_s \, \d L^1_s + \int_0^t X^1_s \, \d L^2_s]$ on the right-hand side of \eqref{eq:ito_l2_critical} can be controlled (from below) by the distance between $X^1$ and $X^2$. The idea is to use that if $X^1_t$ and $X^2_t$ are far apart, then there is a high probability that one of the processes $X^i_t$ will be near the origin, accumulating local time $L^i_{t + \delta} - L^i_t$, while the other one $X^j_t$ is strictly away from the origin ensuring that the integrand in the expression $\int_t^{t + \delta} X^j_s \, \d L^i_s$ is positive. To ensure that the gap between $X^1_t$ and $X^2_t$ does not close rapidly, we have to show that the expected local time cannot grow too quickly. This is the subject of the following lemma.

\begin{lemma} \label{lem:uniform_l_bound}
Let $\xi$ be a nonnegative square-integrable random variable with $\pr(\xi = 0) < 1$ and let $\ell$ be the solution to McKean--Vlasov SDE \eqref{eq:non_linear_skorokhod} with initial condition $\xi$ and $\alpha = 1$. Set $m_ 1 = \ev \xi$ and let $m_2 \geq 0$ such that $\sqrt{\ev \lvert \xi\rvert^2} \leq m_2$. Then for all $\epsilon \in (0, \frac{m_1}{2})$ there exists $\delta > 0$ only depending on $\epsilon$, $m_1$, and $m_2$ such that $\ell_{\delta} \leq \epsilon$.
\end{lemma}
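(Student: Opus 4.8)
The plan is to convert the fixed-point relation defining $\ell$ into a self-referential inequality for $\ell_t$ and then close it by a short bootstrap argument, along the lines of the short-time existence proof (cf.\@ \eqref{eq:lambda_est}), but keeping track of every constant so that it depends only on $m_1$ and $m_2$. Write $Z_t = \sup_{0 \le s \le t}(-W_s)$, which by the reflection principle has the law of $\lvert W_t\rvert$, so $\ev Z_t = \sqrt{2t/\pi}$ and $\ev Z_t^2 = t$. Since $\ell$ is nondecreasing and $W_s \ge -Z_t$ for $s \in [0, t]$, the remark below Definition \ref{def:skorokhod} gives, for $\alpha = 1$,
\[
\ell_t = \ev \sup_{0 \le s \le t}(\xi + W_s - \ell_s)_- \le \ev (\xi - Z_t - \ell_t)_-,
\]
with $\xi$ and $Z_t$ independent. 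The only quantitative input on $\xi$ I use is that its first two moments force a definite gap away from the origin: since $\ev\xi \le \tfrac{m_1}{2}\pr(\xi \le \tfrac{m_1}{2}) + m_2 \sqrt{\pr(\xi > \tfrac{m_1}{2})}$ by Cauchy--Schwarz, we get $\pr(\xi \le \tfrac{m_1}{2}) \le 1 - \gamma_0$ with $\gamma_0 := m_1^2/(4 m_2^2) \in (0, \tfrac14]$ (note $m_2 \ge m_1 > 0$).

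Set $\eta = m_1/2$ and split $\ev(\xi - Z_t - \ell_t)_-$ according to whether $\xi \le \eta$ or $\xi > \eta$: on $\{\xi \le \eta\}$ one bounds $(\xi - Z_t - \ell_t)_- \le Z_t + \ell_t$ (as $\xi \ge 0$), contributing at most $\pr(\xi \le \eta)(\ev Z_t + \ell_t)$; on $\{\xi > \eta\}$ one bounds $(\xi - Z_t - \ell_t)_- \le (\eta - Z_t - \ell_t)_- = (Z_t + \ell_t - \eta)_+$, contributing at most $\ev(Z_t + \ell_t - \eta)_+$. Imposing the bootstrap hypothesis $\ell_t \le \eta/2$, the last term is $\le \ev(Z_t - \eta/2)_+ \le \sqrt{\ev Z_t^2}\,\sqrt{\pr(Z_t > \eta/2)} \le 2t/\eta$ by Cauchy--Schwarz and Markov. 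Combining and using $\pr(\xi \le \eta) \le 1 - \gamma_0$,
\[
\ell_t \le \phi(t) := \frac{1}{\gamma_0}\Bigl(\sqrt{2t/\pi} + \frac{4t}{m_1}\Bigr) = \frac{4 m_2^2}{m_1^2}\Bigl(\sqrt{2t/\pi} + \frac{4t}{m_1}\Bigr),
\]
valid for every $t$ with $\ell_t \le m_1/4$; here $\phi$ depends only on $m_1, m_2$, is continuous and increasing, and $\phi(0) = 0$.

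Now fix $\delta > 0$, depending only on $\epsilon, m_1, m_2$, so small that $\phi(\delta) < m_1/4$ and $\phi(\delta) \le \epsilon$. Since $\pr(\xi = 0) < 1 = 1/\alpha$, Theorem \ref{thm:exist_unique} gives $T = \infty$, and by Proposition \ref{prop:sol_cont} the map $t \mapsto \ell_t$ is continuous with $\ell_0 = \ev L_0 = 0$. Let $t_\ast = \inf\{t \ge 0 \define \ell_t \ge m_1/4\}$; by continuity $t_\ast > 0$, and $\ell_{t_\ast} = m_1/4$ if $t_\ast < \infty$. For $t < t_\ast$ the hypothesis $\ell_t < m_1/4$ holds, so $\ell_t \le \phi(t) \le \phi(t_\ast)$, and letting $t \uparrow t_\ast$ gives $m_1/4 = \ell_{t_\ast} \le \phi(t_\ast)$ when $t_\ast < \infty$. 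If in addition $t_\ast \le \delta$, then monotonicity of $\phi$ forces $m_1/4 \le \phi(\delta) < m_1/4$, a contradiction; hence $t_\ast > \delta$. Consequently $\ell_\delta < m_1/4$, so the inequality $\ell_\delta \le \phi(\delta) \le \epsilon$ applies, which is the claim.

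The only genuinely delicate point I anticipate is the self-referential nature of the estimate (the term $\ell_t$ on both sides) combined with the requirement that the final $\delta$ be controlled purely by the moments $m_1, m_2$: the former is dealt with by the bootstrap using continuity of $\ell$, and the latter by using the moment inequality $\pr(\xi \le m_1/2) \le 1 - m_1^2/(4m_2^2)$ in place of the law-dependent smallness exploited in \eqref{eq:lambda_est}. All remaining steps are elementary Brownian estimates.
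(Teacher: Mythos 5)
Your proof is correct and follows essentially the same strategy as the paper's: a two-moment (Cauchy--Schwarz) bound keeping a definite fraction of the mass of $\xi$ away from the origin, a self-referential inequality for $\ell_t$ obtained by splitting the expectation, and a continuity/first-hitting-time bootstrap to close it. The only differences are cosmetic bookkeeping — you split on $\{\xi \le m_1/2\}$ and control the Brownian term by Chebyshev, whereas the paper splits on $\{\xi + \inf_{s \le t} W_s \le \epsilon\}$ after choosing a small $t_0$ — and both yield a $\delta$ depending only on $\epsilon$, $m_1$, $m_2$.
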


\begin{proof}
Fix $\epsilon \in (0, \frac{m_1}{2})$. Our first goal is to show that $\pr(\xi \leq 2\epsilon) \leq 1 - 2\eta$, where $\eta = (m_1 - 2\epsilon)^2/(2m_2^2)$ and we note that $\eta \in (0, 1/2)$ owing to the assumption on $\epsilon$ and the fact that $m_1 \leq m_2$. We estimate
\begin{equation} \label{eq:first_to_second}
    m_1 = \ev\xi = \ev\bigl[\bf{1}_{\{\xi \leq 2\epsilon\}} \xi + \bf{1}_{\{\xi > 2\epsilon\}} \xi\bigr] \leq 2\epsilon + m_2 \pr(\xi > 2\epsilon)^{1/2}.
\end{equation}
Rearranging gives $\frac{m_1 - 2\epsilon}{m_2} \leq \pr(\xi > 2\epsilon)^{1/2}$, so that $\pr(\xi \leq 2\epsilon) \leq 1 - 2\eta$. Next, let us choose $t_0 > 0$ small enough such that $\pr(\inf_{0 \leq s \leq t_0} W_s \leq -\epsilon) \leq \eta$, so that
\begin{align*}
    \pr\Bigl(\xi + \inf_{0 \leq s \leq t_0} W_s > \epsilon\Bigr) &\geq \pr\Bigl(\xi > 2\epsilon,\, \inf_{0 \leq s \leq t_0} W_s > -\epsilon\Bigr) \\
    &\geq \pr(\xi > 2\epsilon) - \pr\Bigl(\inf_{0 \leq s \leq t_0} W_s \leq -\epsilon\Bigr) \\
    &\geq 2\eta - \eta = \eta,
\end{align*}
so that $\pr(\xi + \inf_{0 \leq s \leq t_0} W_s \leq \epsilon) \leq 1 - \eta$. With this, we estimate for $t \in [0, t_0]$,
\begin{align*}
    \ell_t &= \ev \sup_{0 \leq s \leq t} (\xi + W_s - \ell_s)_- \\
    &\leq \ev\Bigl[\bf{1}_{\{\xi + \inf_{0 \leq s \leq t} W_s > \epsilon\}} (\epsilon - \ell_t)_-\Bigr] + \ev\Bigl[\bf{1}_{\{\xi + \inf_{0 \leq s \leq t} W_s \leq \epsilon\}} \Bigl(\Bigl(\xi + \inf_{0 \leq s \leq t} W_s\Bigr)_- + \ell_t\Bigr)\Bigr] \\
    &\leq (\epsilon - \ell_t)_- + \ev\lvert W_t\rvert + (1 - \eta) \ell_t.
\end{align*}
Thus if $\ell_t \leq \epsilon$, we get $\ell_t \leq \frac{\sqrt{2t}}{\sqrt{\pi}\eta}$. Now, let us set $\delta = \frac{\pi \epsilon^2 \eta^2}{2} \land t_0$, so that $\frac{\sqrt{2 \delta}}{\sqrt{\pi}\eta} \leq \epsilon$. We claim that this implies $\ell_{\delta} \leq \epsilon$. Otherwise, assume that $t_{\ast} = \inf\{t > 0 \define \ell_t = \epsilon\} < \delta$. Since $\ell_{t_{\ast}} = \epsilon$, the above estimate implies
\begin{equation*}
    \ell_{t_{\ast}} \leq \frac{\sqrt{2t_{\ast}}}{\sqrt{\pi}\eta} < \epsilon,
\end{equation*}
which is a contradiction. Hence, $\ell_{\delta} \leq \epsilon$, where $\delta$ only depends on $\epsilon$, $m_1$, and $m_2$.
\end{proof}

For $m > 0$ and $\epsilon \in (0, 1)$, let us denote by $\P_{m, \epsilon}$ the set of laws $\nu \in \P^2([0, \infty)^2)$ such that $\int_{[0, \infty)^2} x \, \d \nu(x, y) = \int_{[0, \infty)^2} y \, 
\d \nu(x, y) = m$, $\int_{[0, \infty)^2} \lvert x\rvert^2 \, \d \nu(x, y) \lor \int_{[0, \infty)^2} \lvert y\rvert^2 \, \d \nu(x, y) \leq \frac{1}{\epsilon^2}$, and
\begin{equation*}
    \epsilon \leq \int_{[0, \infty)^2} \lvert x - y \rvert \, \d \nu(x, y) \leq \biggl(\int_{[0, \infty)^2} \lvert x - y \rvert^2 \, \d \nu(x, y)\biggr)^{1/2} \leq \frac{1}{\epsilon}.
\end{equation*}
We show that $\ev[\int_0^t X^2_s \, \d L^1_s + \int_0^t X^1_s \, \d L^2_s]$ grows uniformly over pairs of initial conditions $(\xi_1, \xi_2)$ whose law is in $\P_{m, \epsilon}$.

\begin{proposition} \label{prop:no_common_lt}
For all $m > 0$ and $\epsilon \in (0, 1)$ there exist $t > 0$ and $\delta > 0$ such that if $(\xi_1, \xi_2) \sim \nu \in \P_{m, \epsilon}$ and $(X^i, L^i)$ is the solution to the Skorokhod problem associated with the solution to McKean--Vlasov SDE \eqref{eq:non_linear_skorokhod} with initial condition $\xi_i$ and $\alpha = 1$ for $i = 1$, $2$, then
\begin{equation*}
    \ev\biggl[\int_0^t X^2_s \, \d L^1_s + \int_0^t X^1_s \, \d L^2_s\biggr] > \delta.
\end{equation*}
\end{proposition}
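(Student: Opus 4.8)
The plan is to produce an event of probability bounded below by a constant depending only on $m$ and $\epsilon$ on which, over a fixed time window, one of the particles --- say $X^1$ --- reaches the origin and accrues a definite amount of local time while $X^2$ stays uniformly away from the origin; on that event the integrand in $\int_0^t X^2_s\,\d L^1_s$ is bounded below by a positive constant, which forces the expectation in the statement to be bounded below. Throughout I write $f^i_s=\xi_i+W_s-\ell^i_s$, so that $L^i_s=\sup_{u\le s}(f^i_u)_-$ and $X^i_s=f^i_s+L^i_s$, and I use that the one-dimensional reflection map is monotone. The quantity to be bounded and the class $\P_{m,\epsilon}$ are both symmetric under swapping the two labels, so I relabel freely. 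Since $\ev\xi_i=m>0$ we have $\pr(\xi_i=0)<1$, so by Theorem~\ref{thm:exist_unique} both solutions are global and in particular defined on the window below.

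\textbf{Step 1: a well-separated, bounded starting configuration.} From $\ev|\xi_1-\xi_2|\ge\epsilon$ and $\ev|\xi_1-\xi_2|^2\le\epsilon^{-2}$, a Cauchy--Schwarz (Paley--Zygmund) estimate yields $\pr(|\xi_1-\xi_2|\ge\epsilon/2)\ge\epsilon^4/4$; together with $\pr(\xi_i>K)\le\epsilon^{-2}K^{-2}$ and a choice $K=K(\epsilon)$, this gives a set $B=\{|\xi_1-\xi_2|\ge\epsilon/2,\ \xi_1\vee\xi_2\le K\}$ with $\pr(B)\ge p_0(\epsilon)>0$. Splitting $B$ by the sign of $\xi_1-\xi_2$ and relabelling, I may assume $B_1:=B\cap\{\xi_1<\xi_2\}$ has probability at least $p_0/2$. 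By Lemma~\ref{lem:uniform_l_bound}, applied to each marginal (first moment $m$, second moment at most $\epsilon^{-2}$), with $\epsilon':=\min(\epsilon/16,\,m/4)$ there is $\delta'=\delta'(\epsilon,m)>0$ such that $\ell^1_{\delta'}\vee\ell^2_{\delta'}\le\epsilon'$. On $B_1$ and for $s\le\delta'$ one has $f^2_s-f^1_s=(\xi_2-\xi_1)-(\ell^2_s-\ell^1_s)\ge\epsilon/2-\epsilon'>0$, hence $X^1_s\le X^2_s$ on $[0,\delta']$; in particular $L^2$ does not increase before $X^1$ first hits zero.

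\textbf{Step 2: hitting zero and staying separated.} Let $\sigma=\inf\{s\ge0:X^1_s=0\}$. On $B_1$ one has $\sigma=\inf\{s:W_s\le\ell^1_s-\xi_1\}\le\inf\{s:W_s\le-K\}$, so by independence of $W$ and $(\xi_1,\xi_2)$, $\pr(B_1\cap\{\sigma\le\delta'/2\})\ge\pr(B_1)\,q_1$ with $q_1=\pr(\inf_{s\le\delta'/2}W_s\le-K)>0$. On $B_1\cap\{\sigma\le\delta'/2\}$ we have $L^1_\sigma=L^2_\sigma=0$, hence $X^2_\sigma=(\xi_2-\xi_1)-(\ell^2_\sigma-\ell^1_\sigma)\ge\epsilon/2-\epsilon'\ge\epsilon/4$. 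Fix $h=\delta'/2$ and, by the strong Markov property at $\sigma$, intersect with the event that the increment $W_{\sigma+\cdot}-W_\sigma$ satisfies $\sup_{0\le v\le h}|W_{\sigma+v}-W_\sigma|\le\epsilon/16$ and $\inf_{0\le v\le h}(W_{\sigma+v}-W_\sigma)\le-\epsilon/32$; this has conditional probability $q_2(\epsilon,m)>0$. Call the resulting event $D$. A short bootstrap (stopping $X^2$ when it would drop below $\epsilon/8$, using $\epsilon'\le\epsilon/16$ and $\sigma+h\le\delta'$) shows that on $D$, $X^2_s\ge\epsilon/8$ for all $s\in[\sigma,\sigma+h]$, while the restarted reflection formula at $\sigma$ gives $L^1_{\sigma+h}-L^1_\sigma\ge\sup_{v\le h}\bigl(-(W_{\sigma+v}-W_\sigma)\bigr)_+\ge\epsilon/32$, since $\ell^1$ is nondecreasing. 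Hence, with $t:=\delta'$, on $D$
\[
  \int_0^t X^2_s\,\d L^1_s\ \ge\ \int_\sigma^{\sigma+h}X^2_s\,\d L^1_s\ \ge\ \frac{\epsilon}{8}\bigl(L^1_{\sigma+h}-L^1_\sigma\bigr)\ \ge\ \frac{\epsilon^2}{256}.
\]

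\textbf{Step 3 and the main obstacle.} Taking expectations and using $\pr(D)\ge\tfrac12 p_0\,q_1\,q_2$,
\[
  \ev\Bigl[\int_0^tX^2_s\,\d L^1_s+\int_0^tX^1_s\,\d L^2_s\Bigr]\ \ge\ \ev\Bigl[\bf{1}_D\int_0^tX^2_s\,\d L^1_s\Bigr]\ \ge\ \frac{p_0\,q_1\,q_2\,\epsilon^2}{512}\ >\ 0,
\]
so the statement holds with $t=\delta'$ and any $\delta$ below this bound, both depending only on $m$ and $\epsilon$. The main obstacle is Step~2: one must prevent the gap between $X^1$ and $X^2$ from collapsing the instant $X^1$ touches zero, and this is exactly where Lemma~\ref{lem:uniform_l_bound} enters --- to guarantee that over $[0,\delta']$ the deterministic interaction functions $\ell^1,\ell^2$ have not yet drifted apart by more than $\epsilon/2$ --- together with the strong-Markov and bootstrap bookkeeping that keeps $X^2$ uniformly positive precisely while $X^1$ reflects. (Alternatively one may first apply Lemma~\ref{lem:ito_for_mfl} to reduce the claim to the contraction bound $\ev|X^1_t-X^2_t|^2\le\ev|\xi_1-\xi_2|^2-2\delta$ and run the same probabilistic argument, but the direct route above is cleaner.)
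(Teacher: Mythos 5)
Your proposal is correct and follows essentially the same route as the paper: the same moment/truncation argument to isolate a well-separated, bounded set of initial configurations, the same appeal to Lemma~\ref{lem:uniform_l_bound} to keep $\ell^1,\ell^2$ small on a window depending only on $(m,\epsilon)$, and the same core idea of a Brownian event of uniformly positive probability on which $L^1$ accrues a definite amount of local time while $X^2$ stays bounded away from the origin. The only (cosmetic) difference is how that event is built --- you run a hitting time plus a strong-Markov restart with a small-oscillation/dip condition, whereas the paper pins the running infimum of $W$ in the window $[-\xi_1-\eta/2,\,-\xi_1-\eta/4]$ in one shot; also note your intermediate claim $X^1_s\le X^2_s$ on $[0,\delta']$ is not justified by pointwise domination of the free paths alone, but the consequence you actually use ($X^2>0$, hence $L^2=0$, up to $\sigma$) follows directly from $X^2_s\ge f^2_s\ge f^1_s+\epsilon/2-\epsilon'$.
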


\begin{proof}
Fix $m > 0$ and $\epsilon \in (0, 1)$. Our first goal is to show that there exists $\eta > 0$ such that for all $(\xi_1, \xi_2) \sim \nu \in \P_{m, \epsilon}$ it holds that
\begin{equation} \label{eq:difference_lower_bound}
    \pr\Bigl(\xi_1 + \eta \leq \xi_2 \leq 1/\eta \text{ or } \xi_2 + \eta \leq \xi_1 \leq 1/\eta\Bigr) = \pr\Bigl(\lvert \xi_1 - \xi_2\rvert \geq \eta, \, \xi_1 \lor \xi_2 \leq 1/\eta\Bigr) \geq \eta.
\end{equation}
For any $\eta > 0$, we have
\begin{equation} \label{eq:a_b_bound}
    \pr\Bigl(\lvert \xi_1 - \xi_2\rvert \geq \eta, \, \xi_1 \lor \xi_2 \leq 1/\eta\Bigr) \geq \pr(\lvert \xi_1 - \xi_2\rvert \geq \eta) - \pr(\xi_1 \lor \xi_2 > 1/\eta)
\end{equation}
For the second term, we estimate
\begin{equation} \label{eq:bound_on_growth}
    \pr(\xi_1 \lor \xi_2 > 1/\eta) \leq \eta \ev[\xi_1 + \xi_2] = 2\eta m,
\end{equation}
which tends to zero as $\eta \to 0$. For the first summand on the right-hand side above, we proceed similarly to \eqref{eq:first_to_second}, so setting $\Delta = \lvert \xi_1 - \xi_2\rvert$, we get
\begin{equation*}
    \epsilon \leq \ev \Delta = \ev\bigl[\bf{1}_{\{\Delta < \eta\}} \Delta + \bf{1}_{\{\Delta \geq \eta\}} \Delta\bigr] \leq \eta + \frac{1}{\epsilon} \pr(\Delta \geq \eta)^{1/2},
\end{equation*}
where we used that $\L(\xi_1, \xi_2) \in \P_{m, \epsilon}$ in the first and second inequality. In particular, for $\eta \in (0, \epsilon/2)$, we get $\pr(\lvert \xi_1 - \xi_2\rvert \geq \eta) \geq \epsilon^4/4$. Combining this with \eqref{eq:a_b_bound} and \eqref{eq:bound_on_growth} and choosing $\eta = \frac{\epsilon^4}{4(1 + 2m)} \land (\frac{\epsilon}{2})$, we get
\begin{equation*}
    \pr\Bigl(\lvert \xi_1 - \xi_2\rvert \geq \eta, \, \xi_1 \lor \xi_2 \leq 1/\eta\Bigr) \geq \frac{\epsilon^4}{4} - 2\eta m \geq \eta.
\end{equation*}

Having established this, we are in the position to complete the proof of the proposition. Fix ${(\xi_1, \xi_2) \sim \nu \in \P_{m, \epsilon}}$ and let $(X^i, L^i)$, $i = 1$, $2$, be as in the statement of the proposition. We will construct the desired $t > 0$ and $\delta > 0$ independently of $(\xi_1, \xi_2)$. By Lemma \ref{lem:uniform_l_bound}, there exists $t > 0$ small enough only depending on $m$ and $\epsilon$ such that $\ell^i_t \leq \eta/4$. By \eqref{eq:difference_lower_bound}, we have that $\pr(\xi_1 + \eta \leq \xi_2 \leq 1/\eta) \geq \eta/2$ or $\pr(\xi_2 + \eta \leq \xi_1 \leq 1/\eta) \geq \eta/2$, Without loss of generality, let us assume that the former is the case. Then, on $\{\xi_1 + \eta \leq \xi_2 \leq 1/\eta\}$, we have $\xi_1 - \ell^1_s + \frac{3\eta}{4} \leq \xi_1 + \frac{3\eta}{4} \leq \xi_2 - \ell^2_s$ for $s \in [0, t]$. Next, let us define the set $A = \{\inf_{0 \leq s \leq t} W_s \in [-\xi_1 - \frac{\eta}{2}, -\xi_1 - \frac{\eta}{4}]\}$, so that
\begin{equation*}
    \pr(A) \geq \pr\Bigl(\inf_{0 \leq s \leq t} W_s \in \Bigl[-\frac{1}{\eta} - \frac{\eta}{2}, -\frac{1}{\eta} - \frac{\eta}{4}\Bigr]\Bigr) = p > 0.
\end{equation*}
On the set $\{\xi_1 + \eta \leq \xi_2 \leq 1/\eta\} \cap A$, which has probability at least $\frac{p \eta}{2}$, it holds that
\begin{equation*}
    \inf_{0 \leq s \leq t} (\xi_1 + W_s - \ell^1_s)_- \leq \inf_{0 \leq s \leq t} (\xi + W_s)_- \in \Bigl[\frac{\eta}{4}, \frac{\eta}{2}\Bigr],
\end{equation*}
so that $L^1_t > \frac{\eta}{4}$. On the other hand,
\begin{equation*}
    \inf_{0 \leq s \leq t} X^2_s \geq \inf_{0 \leq s \leq t} (\xi_2 + W_s - \ell^2_s) \geq \inf_{0 \leq s \leq t}\biggl(\xi_1 + \frac{3\eta}{4} + W_s\biggr) \geq \frac{\eta}{4}.
\end{equation*}
Consequently,
\begin{align*}
    \ev\int_0^t X^2_s \, \d L^1_s &\geq \ev\biggl[\bf{1}_{\{\xi_1 + \eta \leq \xi_2 \leq 1/\eta\} \cap A}\int_0^t X^2_s \, \d L^1_s\biggr] \\
    &= \frac{\eta}{4}\ev\bigl[\bf{1}_{\{\xi_1 + \eta \leq \xi_2 \leq 1/\eta\} \cap A} L^1_{\delta}\bigr] \\
    &\geq \frac{p \eta^3}{24}.
\end{align*}
Since both $p$ and $\eta$ only depend on $m$ and $\epsilon$, this concludes the proof.
\end{proof}

\begin{proof}[Proof of Theorem \ref{thm:mfstationary} \ref{it:stationary}]
We begin by proving that the set of stationary distributions is given by $(\text{Exp}(\lambda))_{\lambda > 0}$. Suppose that $\mu \in \P([0, \infty))$ is a stationary distribution for McKean--Vlasov SDE \eqref{eq:non_linear_skorokhod} and let $\ell$ be the corresponding solution with initial condition $\xi \sim \mu$. Fix $r \in \R$ and apply It\^{o}'s formula to $\exp(irX_t)$:
\begin{align*}
    \d\exp(irX_t) &=ir\exp(irX_t)\, (\d W_t+\d L_t-d\ell_t)-\frac{r^2}{2}\exp(irX_t) \, \d t\\
    &=ir\exp(irX_t)\, \d W_t+ir \, \d L_t-ir\exp(irX_t)\, \d \ell_t-\frac{r^2}{2}\exp(irX_t) \, \d t.
\end{align*}
We note that the coefficient of $\d L_t$ is $ir\exp(0)=ir$ by the zero-off condition of the Skorokhod map. Taking expectation yields
\begin{align} \label{eq:char_eq}
\begin{split}
    \ev[\exp(irX_t)] &= \ev[\exp(irX_0)]+ir \ell_t-\int_0^tir\ev[\exp(irX_s)]\, \d \ell_s \\
    &\ \ \ -\int_0^t\frac{r^2}{2}\ev[\exp(irX_s)]\, \d s
\end{split}
\end{align}
where we have applied Fubini's theorem. Since $\mu$ is stationary, we have that $\varphi(r) =  \ev[\exp(irX_t)]=\ev[\exp(irX_0)]$ for all $t \geq 0$, and $\ell_t=\lambda t/2$ for some $\lambda>0$. Thus, from \eqref{eq:char_eq}, we obtain the algebraic equation
\begin{equation*}
i\lambda r-i\lambda r\varphi(r)-r^2\varphi(r)=0
\end{equation*}
Hence, $\varphi(r)= \frac{\lambda}{\lambda-ir}$. This is the characteristic function of an exponential random variable with rate $\lambda$. Consequently, the set of stationary distributions is a subset of $(\text{Exp}(\lambda))_{\lambda > 0}$. To show conversely that any $\text{Exp}(\lambda)$ with $\lambda > 0$ actually is a stationary distribution, we apply the superposition principle from Proposition \ref{prop:pde_to_sde}. Since the flow $(\mu_t)_{t \geq 0}$ given by $\mu_t = \text{Exp}(\lambda)$ lies in $C([0, T_{\ast}); L^1(\R_+)) \cap L^1_{\textup{loc}}([0, T_{\ast}); W^{1, 1}(\R_+))$ and solves PDE \eqref{eq:pde_strong}, it induces a solution $\ell$ to McKean--Vlasov SDE \eqref{eq:non_linear_skorokhod} such that the law $\L(X_t)$ of the solution $(X, L)$ of the Skorokhod problem associated with $\ell$ equals $\mu_t = \textup{Exp}(\lambda)$. Thus, $\textup{Exp}(\lambda)$ is indeed stationary.

Next, let us establish convergence to the stationary distribution. Assume that $\ev \xi_1 = m > 0$ and that $\xi_1$ has a finite second moment and let $\xi_2$ be an exponential distribution with rate $\lambda = m^{-1}$. Let $\ell^i$ denote the solution to McKean--Vlasov SDE \eqref{eq:non_linear_skorokhod} with initial condition $\xi_i$ and $\alpha = 1$ and let $(X^i, L^i)$ be the solution to the Skorokhod problem associated with $\ell^i$. Our goal is to show that
\begin{equation*}
    \lim_{t \to \infty} \ev\lvert X^1_t - X^2_t\rvert = 0.
\end{equation*}
We argue by contradiction, supposing that $\limsup_{t \to \infty} \ev\lvert X^1_t - X^2_t\rvert > 0$. It follows from Lemma \ref{lem:ito_for_mfl} that $\ev \lvert X^1_t - X^2_t\rvert^2$ is nonincreasing in $t \geq 0$, so that for $\epsilon \in (0, 1)$ small enough, we have
\begin{equation*}
    \epsilon < \limsup_{t \to \infty} \ev\lvert X^1_t - X^2_t\rvert \leq \lim_{t \to \infty} (\ev \lvert X^1_t - X^2_t\rvert^2)^{1/2} < \frac{1}{\epsilon}.
\end{equation*}
In particular, we can find a sequence of times $(t_n)_{n \geq 1}$ tending to infinity such that the family $(\L(X^1_{t_n}, X^2_{t_n}))_{n \geq 1}$ lies in $\P_{m, \epsilon}$. Consequently, by Proposition \ref{prop:no_common_lt}, there exist $t > 0$ and $\delta > 0$ such that
\begin{equation*}
    \ev\biggl[\int_{t_n}^{t_n + t} X^2_s \, \d L^1_s + \int_{t_n}^{t_n + t} X^1_s \, \d L^2_s\biggr] > \delta.
\end{equation*}
Now, let us choose $n \geq 1$ large enough such that 
\begin{equation*}
    \ev\lvert X^1_{t_n} - X^2_{t_n}\rvert^2 - \lim_{s \to \infty} \ev \lvert X^1_s - X^2_s\rvert^2 < \delta.
\end{equation*}
Then, using Lemma \ref{lem:ito_for_mfl}, we can derive the following contradiction:
\begin{align*}
    \lim_{s \to \infty} \ev \lvert X^1_s - X^2_s\rvert^2 &\leq \ev\lvert X^1_{t_n + t} - X^2_{t_n + t}\rvert^2 \\
    &= \ev\lvert X^1_{t_n} - X^2_{t_n}\rvert^2 - 2\ev\biggl[\int_{t_n}^{t_n + t} \bf{1}_{\{X^2_s > 0\}} \, \d L^1_s + \int_{t_n}^{t_n + t} \bf{1}_{\{X^1_s > 0\}} \, \d L^2_s\biggr] \\
    &< \ev\lvert X^1_{t_n} - X^2_{t_n}\rvert^2 - 2\delta \\
    &< \lim_{s \to \infty} \ev \lvert X^1_s - X^2_s\rvert^2 - \delta.
\end{align*}
\end{proof}

Note that while we use that fact that $\lim_{s \to \infty} \ev\lvert X^1_s - X^2_s\rvert^2 > 0$ in order to arrive at a contradiction, the original assumption leading to this contradiction was $\limsup_{s \to \infty} \ev\lvert X^1_s - X^2_s\rvert > 0$. Consequently, the proof does not establish convergence in $L^2$ but only in $L^1$.

\subsection{Convergence to the Self-Similar Profile for \texorpdfstring{$\alpha < 1$}{alpha less than one}}

The following simple lemma allows us to bound two solutions to McKean-Vlasov SDEs \eqref{eq:non_linear_skorokhod} in terms of their initial conditions in the subcritical regime.

\begin{lemma} \label{lem:non_expansive}
Let $\xi_1$ and $\xi_2$ be two nonnegative integrable random variables. For $i = 1$, $2$, let $(X^i, L^i)$ be the solution of the Skorokhod problem associated with the solution to McKean--Vlasov SDE \eqref{eq:non_linear_skorokhod} with initial condition $\xi_i$ and $\alpha < 1$. Then
\begin{equation}
    \ev \lvert X^1_t - X^2_t\rvert \leq \frac{2}{1 - \alpha} \ev\lvert \xi_1 - \xi_2\rvert
\end{equation}
for $t \geq 0$.
\end{lemma}

\begin{proof}
By the triangle inequality, we have
\begin{equation} \label{eq:simple_mfl_l1_est}
    \ev \lvert X^1_t - X^2_t\rvert \leq \ev \lvert \xi_1 - \xi_2\rvert + \alpha \lvert \ell^1_t - \ell^2_t\rvert + \ev \lvert L^1_t - L^2_t\rvert.
\end{equation}
To estimate the difference between the local times, we note that
\begin{align*}
    \sup_{0 \leq s \leq t} \ev \lvert L^1_s - L^2_s\rvert &\leq \sup_{0 \leq s \leq t} \ev\biggl\lvert \sup_{0 \leq u \leq s} (\xi_1 + W_u - \alpha \ell^1_u)_- - \sup_{0 \leq u \leq s} (\xi_2 + W_u - \alpha \ell^2_u)_-\biggr\rvert \\
    &\leq \sup_{0 \leq s \leq t} \ev\biggl[\sup_{0 \leq u \leq s} \bigl\lvert \xi_1 - \alpha \ell^1_u - (\xi_2 - \alpha \ell^2_u)\bigr\rvert\biggr] \\
    &\leq \ev\lvert \xi_1 - \xi_2\rvert + \alpha \sup_{0 \leq s \leq t} \lvert \ell^1_s - \ell^2_s\rvert.
\end{align*}
But Jensen's inequality gives $\lvert \ell^1_s - \ell^2_s\rvert \leq \ev \lvert L^1_s - L^2_s\rvert$, so that by the above inequality, we find that $\sup_{0 \leq s \leq t} \lvert \ell^1_s - \ell^2_s\rvert \leq \frac{1}{1 - \alpha} \ev\lvert \xi_1 - \xi_2\rvert$, which in turn yields $\sup_{0 \leq s \leq t} \ev \lvert L^1_s - L^2_s\rvert \leq \frac{1}{1 - \alpha} \ev\lvert \xi_1 - \xi_2\rvert$. Plugging all of these estimates into \eqref{eq:simple_mfl_l1_est} finally shows
\begin{equation*}
    \ev\lvert X^1_t - X^2_t\rvert \leq \ev\lvert \xi_1 - \xi_2\rvert + \frac{\alpha}{1 - \alpha} \ev\lvert \xi_1 - \xi_2\rvert + \frac{1}{1 - \alpha} \ev\lvert \xi_1 - \xi_2\rvert
    = \frac{2}{1 - \alpha} \ev\lvert \xi_1 - \xi_2\rvert
\end{equation*}
as required.
\end{proof}

Next, we derive a result similar to Proposition \ref{prop:no_common_lt} that provides a lower bound on the quantity $\ev[\int_0^t X^2_s \, \d L^1_s + \int_0^t X^1_s \, \d L^2_s]$ appearing in Lemma \ref{lem:ito_for_mfl} that only depends on the expected distance between the initial conditions.

\begin{proposition} \label{eq:loc_time_growth}
Let $\xi_1$ and $\xi_2$ be two nonnegative integrable random variables such that $\xi_1 \leq \xi_2$. For $i = 1$, $2$, let $(X^i, L^i)$ be the solution of the Skorokhod problem associated with the solution to McKean--Vlasov SDE \eqref{eq:non_linear_skorokhod} with initial condition $\xi_i$ and $\alpha \leq 1$. Then there exists $t > 0$ such that
\begin{equation*}
    \ev\biggl[\int_0^t X^2_s \, \d L^1_s + \int_0^t X^1_s \, \d L^2_s\biggr] \geq \frac{\ev[\xi_2 - \xi_1]^2}{4}.
\end{equation*}
\end{proposition}

\begin{proof}
If $\ev[\xi_2 - \xi_1] = 0$, the statement is trivial, so let us suppose that $\ev[\xi_2 - \xi_1] > 0$. For notational convenience, set $m = \ev[\xi_2 - \xi_1]/2$. Define the stopping time $\varrho = \inf\{t > 0 \define X^1_t + m \geq X^2_t\} < \infty$. Note that the definition of $m$ guarantees that the event $\{\varrho > 0\}$ has positive probability. On $\{\varrho > 0\}$, we have 
\begin{equation*}
    \xi_1 + W_{\varrho} - \alpha \ell^1_{\varrho} + L^1_{\varrho} + m = X^1_{\varrho} + m = X^2_{\varrho} = \xi_2 + W_{\varrho} - \alpha \ell^2_{\varrho},
\end{equation*}
where we use that for $t \in [0, \varrho)$ it holds that $X^2_t > X^1_t + m > 0$, so that $L^2_t = 0$ for $t \in [0, \varrho]$. Rearranging the above equality yields
\begin{equation*}
    L^1_{\varrho} = \xi_2 - \xi_1 + \alpha (\ell^1_{\varrho} - \ell^2_{\varrho}) - m \geq \xi_2 - \xi_1 - m
\end{equation*}
since $\ell^1_t \geq \ell^2_t$ for $t \geq 0$ by Proposition \ref{prop:comparison_mfl}. Thus, on $\{\varrho > 0\}$, we have
\begin{equation*}
    \int_0^{\varrho} X^2_s \, \d L^1_s > \int_0^{\varrho} m \, \d L^1_s \geq m L^1_{\varrho} \geq m\bigl(\xi_2 - \xi_1 - m\bigr).
\end{equation*}
Taking the positive part on both side gives $\int_0^{\varrho} X^2_s \, \d L^1_s > m(\xi_2 - \xi_1 - m)_+$. Since the right-hand side vanishes on $\{\varrho = 0\}$, this finally implies
\begin{equation*}
    \ev\biggl[\int_0^{\varrho} X^2_s \, \d L^1_s\biggr] > m \ev(\xi_2 - \xi_1 - m)_+ \geq m\bigl(\ev[\xi_2 - \xi_1] - m\bigr)_+ = \frac{(\ev[\xi_2 - \xi_1])^2}{4}.
\end{equation*}
From the above inequality the statement of the proposition follows upon choosing $t > 0$ sufficiently large.
\end{proof}

We can now deliver the proof of the second part of Theorem \ref{thm:mfstationary}.

\begin{proof}[Proof of Theorem \ref{thm:mfstationary} \ref{it:self_similar}]
Let $\rho_{\alpha}$ be given as in \eqref{eq:self_similar_profile} and set $\mu_t = (\sqrt{t} \id_{\R})^{\#} \rho_{\alpha}$ for $t \geq 0$, where $\id_{\R}$ is the identity function $\R \to \R$ and $(\cdot)^{\#}$ denotes the pushforward of a map. Then one can verify that $(\mu_t)_{t \geq 0} \in C((0, \infty); L^1(\R_+)) \cap L^1_{\textup{loc}}((0, \infty); W^{1, 1}(\R_+))$ and a straightforward but cumbersome calculation shows that the flow $(\mu_t)_{t \geq 0}$ solves PDE \eqref{eq:pde_strong}. Thus, by Proposition \ref{prop:pde_to_sde}, there exists a solution $\ell$ to McKean--Vlasov SDE \eqref{eq:non_linear_skorokhod} such that the law $\L(X_t)$ of the solution $(X, L)$ of the Skorokhod problem associated with $\ell$ coincides with $\mu_t$. Note that $\mu_0 = \delta_0$, so $X_0 = 0$. Thus, $\rho_{\alpha}$ is indeed the self-similar profile. Next, another simple computation shows that $\ev X_1 = \int_{[0, \infty)} x \, \d \rho_{\alpha}(x) = c_{\alpha}(1 - \alpha)$, so that
\begin{equation*}
    c_{\alpha}(1 - \alpha) \sqrt{t} = \sqrt{t} \ev X_1 = \ev X_t = \ev X_0 + (1 - \alpha) \ell_t = (1 - \alpha) \ell_t,
\end{equation*}
where we use that $X_0 = 0$. This implies that $\ell_t = c_{\alpha} \sqrt{t}$ for $t \geq 0$ as required.

Next, we establish the convergence to the self-similar profile under the additional assumption that the initial condition is square-integrable. This assumption will be lifted further below. Let $\xi_2$ be a nonnegative random variable with finite second moment and set $\xi_1 = 0$. Let $\ell^i$ denote the solution to McKean--Vlasov SDE \eqref{eq:non_linear_skorokhod} with initial condition $\xi_i$ and $\alpha \in [0, 1)$ and let $(X^i, L^i)$ be the solution to the Skorokhod problem associated with $\ell^i$. Note that since $\xi_1 = 0$, $\ell^1$ is the self-similar solution. We wish to show that 
\begin{equation} \label{eq:convergence_l2}
    \lim_{t \to \infty} \ev\lvert X^1_t - X^2_t\rvert = 0.
\end{equation}
Similarly to the proof of Theorem \ref{thm:mfstationary} \ref{it:stationary}, for the purpose of deriving a contradiction, let us suppose that $\limsup_{t \to \infty} \ev\lvert X^1_t - X^2_t\rvert > 0$. Now, let $\epsilon = \limsup_{t \to \infty} \ev\lvert X^1_t - X^2_t\rvert/4$. The first step is to inductively define a sequence of times $(t_n)_{n \geq 0}$ in $[0, \infty)$ such that $t_n \leq t_{n + 1}$ and
\begin{equation*}
    \ev\biggl[\int_{t_n}^{t_{n + 1}} X^2_t \, \d L^1_t + \int_{t_n}^{t_{n + 1}} X^1_t \, \d L^2_t\biggr] \geq \epsilon^2.
\end{equation*}
For this we employ Proposition \ref{eq:loc_time_growth}. To start with we set $t_0 = 0$. Next, suppose that we have constructed $t_n$ for $n \geq 0$. Fix $s_n \geq t_n$ with $\ev[X^2_{s_n} - X^1_{s_n}] \geq 2\epsilon$. Then, we apply Proposition \ref{eq:loc_time_growth} to the systems started at time $s_n$ from the initial conditions $X^1_{s_n}$ and $X^2_{s_n}$, respectively. Note that $X^1_{s_n} \leq X^2_{s_n}$ by Proposition \ref{prop:comparison_mfl}. This implies that the existence of $t_{n + 1} > s_n$ such that
\begin{align*}
    \ev\biggl[\int_{t_n}^{t_{n + 1}} X^2_t \, \d L^1_t + \int_{t_n}^{t_{n + 1}} X^1_t \, \d L^2_t\biggr] &\geq \ev\biggl[\int_{s_n}^{t_{n + 1}} X^2_t \, \d L^1_t + \int_{s_n}^{t_{n + 1}} X^1_t \, \d L^2_t\biggr] \\
    &\geq \frac{\ev[X^2_{s_n} - X^1_{s_n}]^2}{4} \\
    &\geq \epsilon^2,
\end{align*}
concluding the inductive construction.

We will now derive a useful estimate to bound the right-hand side of \eqref{eq:ito_l2_critical} from Lemma \ref{eq:ito_l2}. In order to do this we observe that
\begin{align*}
    \lvert \xi_1 - \xi_2&\rvert^2 - 2\alpha \ev[\xi_1 - \xi_2] (\ell^1_t - \ell^2_t) - \alpha(1 - \alpha) (\ell^1_t - \ell^2_t)^2 \\
    &\leq \rvert\xi_1 - \xi_2\rvert^2 + \frac{2\alpha}{1 - \alpha}\ev[\xi_1 - \xi_2]^2 + \frac{\alpha(1 - \alpha)}{2} (\ell^1_t - \ell^2_t)^2  - \alpha(1 - \alpha) (\ell^1_t - \ell^2_t)^2 \\
    &\leq \frac{1 + \alpha}{1 - \alpha} \ev\lvert \xi_1 - \xi_2\rvert^2 - \frac{\alpha(1 - \alpha)}{2} (\ell^1_t - \ell^2_t)^2.
\end{align*}
Dropping the last term on the right-hand side above and inserting the resulting expression into \eqref{eq:ito_l2_critical} yields
\begin{align*}
    \ev\lvert X^1_t - X^2_t\rvert^2 &\leq \frac{1 + \alpha}{1 - \alpha} \ev\lvert \xi_1 - \xi_2\rvert^2 - \ev\biggl[\int_0^t X^2_s \, \d L^1_s + \int_0^t X^1_s \, \d L^2_s\biggr] \\
    &\leq \frac{1 + \alpha}{1 - \alpha} \ev\lvert \xi_1 - \xi_2\rvert^2 - \sum_{n = 0}^{\infty} \ev\biggl[\int_{t_n \land t}^{t_{n + 1} \land t} X^2_t \, \d L^1_t + \int_{t_n \land t}^{t_{n + 1} \land t} X^1_t \, \d L^2_t\biggr] \\
    &= \frac{1 + \alpha}{1 - \alpha} \ev\lvert \xi_1 - \xi_2\rvert^2 - \epsilon^2 \sup\{n \geq 1 \define t_{n + 1} \leq t\}.
\end{align*}
Clearly, the right-hand side tends to $-\infty$ as $t \to \infty$ yielding the desired contradiction. Thus, we have established that $\lim_{t \to \infty} \ev[X^1_t - X^2_t] = 0$ if $\xi_2$ is square-integrable. 

Let us now suppose that $\xi_2$ is only integrable instead of square-integrable. We will show that for every $\epsilon > 0$ there exists $t_0 > 0$ such that $\ev\lvert X^1_t - X^2_t\rvert < \epsilon$ for all $t \geq t_0$. Fix $\epsilon > 0$ and choose $K \geq 1$ large enough, such that
\begin{equation*}
    \ev\bigl\lvert\xi_2 - (\xi_2 \land K)\bigr\rvert < \frac{(1 - \alpha)\epsilon}{4}.
\end{equation*}
Let $\tilde{\ell}^2$ be the solution to McKean--Vlasov SDE \eqref{eq:non_linear_skorokhod} with initial condition $\xi_2 \land K$ and denote the solution to the associated Skorokhod problem by $(\tilde{X}^2, \tilde{L}^2)$. By Lemma \ref{lem:non_expansive}, it holds that
\begin{equation} \label{eq:non_expansive_2}
    \ev\lvert X^2_t - \tilde{X}^2_t\rvert \leq \frac{2}{1 - \alpha} \ev\bigl\lvert\xi_2 - (\xi_2 \land K)\bigr\rvert \leq \frac{\epsilon}{2} 
\end{equation}
for any $t \geq 0$. Next, we apply the convergence result \eqref{eq:convergence_l2} for square-integrable initial conditions established above to the solution $\tilde{\ell}^2$ with initial condition $\xi_2 \land K$. It implies the existence of $t_0 > 0$ such that $\ev\lvert X^1_t - \tilde{X}^2_t\rvert < \epsilon/2$ for all $t \geq 0$. Combining this with \eqref{eq:non_expansive_2} shows that
\begin{equation*}
    \ev\lvert X^1_t - X^2_t\rvert \leq \ev\lvert X^1_t - \tilde{X}^2_t\rvert + \ev\lvert \tilde{X}^2_t - X^2_t\rvert < \epsilon
\end{equation*}
for all $t \geq t_0$. Since $\epsilon$ was arbitrary this proves that $\ev\lvert X^1_t - X^2_t\rvert \to 0$. From this the first convergence statement in \eqref{eq:convergence_self_similar}, i.e.\@ the convergence of $\L(X^1_t)$ to $(\sqrt{t} \id_{\R})^{\#} \rho_{\alpha}$ in the $1$-Wasserstein distance, immediately follows. 

To obtain the second convergence result in \eqref{eq:convergence_self_similar}, note that
\begin{equation*}
    \frac{\lvert \ev[X^2_t - X^1_t]\rvert}{1 - \alpha} = \biggl\lvert \biggl(\frac{\ev \xi_2}{1 - \alpha} + \ell^2_t\biggr) - \ell^1_t\biggr\rvert = \biggl\lvert \biggl(\frac{\ev \xi_2}{1 - \alpha} + \ell^2_t\biggr) - c_{\alpha} \sqrt{t}\biggr\rvert,
\end{equation*}
where we inserted the explicit representation of the self-similar solution $\ell^1$ in the second equality. Clearly, the left-hand side above tends to zero as $t \to \infty$, so the same holds for the right-hand side, yielding the second convergence statement in \eqref{eq:convergence_self_similar}. This completes the proof.
\end{proof}

\appendix

\section{Auxiliary Results}

\begin{proposition} \label{prop:skorokhod_problem}
Let $T \in (0, \infty]$ and let $z = (z_t)_{t \in [0, T)}$ be a c\`adl\`ag function with $z_0 \geq 0$. Then the Skorokhod problem for $z$ (cf.\@ Definition \ref{def:skorokhod}) has a unique solution $(x, \ell) = (x_t, \ell_t)_{t \in [0, T)}$ given by $\ell_t = \sup_{0 \leq s \leq t} (z_s)_-$ and $x_t = z_t + \ell_t$ for $t \in [0, T)$. Moreover, for $0 \leq s \leq t < T$, it holds that
\begin{equation} \label{prop:skorokhod_increment}
    \ell_t - \ell_s = \sup_{s \leq u \leq t} \bigl(x_s + (z_u - z_s)\bigr)_-.
\end{equation}
Let $z' = (z'_t)_{t \in [0, T)}$ be another c\`adl\`ag function such that $z_0 \leq z'_0$ and $z_t - z_s \leq z'_t - z'_s$ for $0 \leq s \leq t < T$. Then it holds for the solution $(x', \ell')$  of the Skorokhod problem for $z'$ that 
\begin{equation} \label{eq:skorokhod_comparison}
    x_t \leq x'_t \quad \text{and} \quad \ell_t - \ell_s \geq \ell'_t - \ell'_s
\end{equation}
for $0 \leq s \leq t < T$.
\end{proposition}

\begin{proof}
The fact that the unique solution to the Skorokhod problem is given by the couple $(x, \ell)$ defined in the statement of the proposition is well-known. So we focus on proving \eqref{prop:skorokhod_increment} and \eqref{eq:skorokhod_comparison}. For $0 \leq s \leq t < T$, we have
\begin{align*}
    \ell_t &= \sup_{0 \leq u \leq t} (z_u)_- \\
    &= \sup_{0 \leq u \leq s} (z_u)_- + \sup_{s \leq u \leq t} \biggl((z_u)_- - \sup_{0 \leq r \leq s} (z_r)_-\biggr)_+ \\
    &= \ell_s + \sup_{s \leq u \leq t} \bigl((z_u)_- - \ell_s\bigr)_+.
\end{align*}
Inserting
\begin{equation*}
    \bigl((z_u)_- - \ell_s\bigr)_+ = \bigl(\ell_s - (z_u)_-\bigr)_- = (\ell_s + z_u)_- = \bigl(x_s + (z_u - z_s)\bigr)_-
\end{equation*}
into the above equality and rearranging yields $\ell_t - \ell_s = \sup_{s \leq u \leq t} (x_s + (z_u - z_s))_-$.

Let $z'$ and $(x', \ell')$ be as in the statement of the proposition and fix $0 \leq s \leq t < T$. By Lemma 4.1 from \cite{kruk_skorokhod_2007}, it holds that $x_s \leq x'_s$. Then, since $z_u - z_s \leq z'_u - z'_s$ for all $u \in [s, t]$, we get in view of the already established representation \eqref{prop:skorokhod_increment} that
\begin{align*}
    \ell_t - \ell_s &= \sup_{s \leq u \leq t} \bigl(x_s + (z_u - z_s)\bigr)_- \\
    &\geq \sup_{s \leq u \leq t} \bigl(x'_s + (z'_u - z'_s)\bigr)_- \\
    &= \ell'_t - \ell'_s.
\end{align*}
This concludes the proof.
\end{proof}

\begin{lemma} \label{lem:lower_bound}
Let $\xi$ be a nonnegative random variable and let $W$ be a Brownian motion independent of $\xi$. Denote by $(X, L)$ the solution to the Skorokhod problem for $\xi + W$ and define $f_t = \ev L_t$. Then there exists $t_0 > 0$ and some constant $C > 0$, such that $f_t \geq C \sqrt{t - t_0}$ for $t \in [t_0, \infty)$.
\end{lemma}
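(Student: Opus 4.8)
The plan is to use the explicit formula for the one-dimensional Skorokhod map together with the independence of $W$ and $\xi$. By the remark below Definition \ref{def:skorokhod}, the solution to the Skorokhod problem for $\xi + W$ has reflection term $L_t = \sup_{0 \le s \le t}(\xi + W_s)_-$. Since $\xi$ does not depend on $s$, this rearranges to $L_t = (M_t - \xi)_+$, where $M_t \define \sup_{0 \le s \le t}(-W_s)$. Because $-W$ is again a Brownian motion, the reflection principle and Brownian scaling give $M_t \overset{d}{=} \sqrt{t}\,\lvert Z\rvert$ with $Z \sim \mathcal{N}(0,1)$, and, crucially, $M_t$ is independent of $\xi$.

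Next I would truncate $\xi$. Fix $K > 0$ large enough that $\pr(\xi \le K) \ge \tfrac12$; such a $K$ exists since $\xi$ is a.s.\ finite. On $\{\xi \le K\}$ we have $(M_t - \xi)_+ \ge (M_t - K)_+$, so by independence of $M_t$ and $\xi$,
\[
    f_t = \ev\bigl[(M_t - \xi)_+\bigr] \ge \ev\bigl[(M_t - K)_+ \bf{1}_{\{\xi \le K\}}\bigr] = \pr(\xi \le K)\,\ev\bigl[(M_t - K)_+\bigr] \ge \tfrac12\,\ev\bigl[(M_t - K)_+\bigr].
\]
It then remains to bound $\ev[(M_t - K)_+] = \ev[(\sqrt{t}\,\lvert Z\rvert - K)_+]$ below by a multiple of $\sqrt{t}$ for large $t$. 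For $t \ge 4K^2$ one has $\sqrt{t} - K \ge \tfrac12\sqrt{t}$, hence on $\{\lvert Z\rvert \ge 1\}$ we get $\sqrt{t}\,\lvert Z\rvert - K \ge \tfrac12\sqrt{t}$, so that $\ev[(\sqrt{t}\,\lvert Z\rvert - K)_+] \ge \tfrac12\sqrt{t}\,\pr(\lvert Z\rvert \ge 1)$. Taking $t_0 = 4K^2 > 0$ and $C = \tfrac14\pr(\lvert Z\rvert \ge 1) > 0$ yields $f_t \ge C\sqrt{t} \ge C\sqrt{t - t_0}$ for all $t \ge t_0$, which is the claim. (One could equally avoid the explicit threshold by dominated convergence, using $\ev[(\lvert Z\rvert - K/\sqrt{t})_+] \to \ev\lvert Z\rvert = \sqrt{2/\pi}$ as $t \to \infty$.)

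There is no serious obstacle here; the only point needing a little care is the passage from the random, possibly heavy-tailed $\xi$ to a deterministic threshold $K$, which is handled by the pointwise inequality $(M_t - \xi)_+ \ge (M_t - K)_+\bf{1}_{\{\xi \le K\}}$ together with the independence of $M_t$ and $\xi$.
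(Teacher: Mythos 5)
Your proof is correct. It rests on the same two pillars as the paper's argument---the explicit Skorokhod formula $L_t = \sup_{0 \le s \le t}(\xi + W_s)_-$, truncation of $\xi$ at a level of probability at least $\tfrac12$, and the reflection principle---but the route through them is genuinely more direct. Where the paper conditions at the hitting time $\tau_x = \inf\{t > 0 : W_t = -x\}$ and invokes the strong Markov property to get the exact identity $\ev\sup_{0 \le s \le t}(x + W_s)_- = \sqrt{2/\pi}\,\ev\bigl[\bf{1}_{\{t > \tau_x\}}(t - \tau_x)^{1/2}\bigr]$, and then chooses $t_0$ so that $\pr(\tau_{x_0} \le t_0) \ge \tfrac12$, you bypass hitting times entirely via the algebraic identity $L_t = (M_t - \xi)_+$ with $M_t = \sup_{0 \le s \le t}(-W_s)$, the independence of $M_t$ from $\xi$, and the distributional identity $M_t \overset{d}{=} \sqrt{t}\,\lvert Z\rvert$. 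All steps check out: the factorisation $\ev[(M_t - K)_+\bf{1}_{\{\xi \le K\}}] = \pr(\xi \le K)\,\ev[(M_t - K)_+]$ is justified by independence, and the elementary bound $\ev[(\sqrt{t}\lvert Z\rvert - K)_+] \ge \tfrac12\sqrt{t}\,\pr(\lvert Z\rvert \ge 1)$ for $t \ge 4K^2$ gives explicit constants $t_0 = 4K^2 > 0$ and $C = \tfrac14\pr(\lvert Z\rvert \ge 1)$, after which $f_t \ge C\sqrt{t} \ge C\sqrt{t - t_0}$ closes the argument. What the paper's hitting-time decomposition buys is the clean exact expression above (and a constant $\sqrt{2/\pi}/4$), but for the stated lower bound your more elementary computation is entirely sufficient and arguably cleaner.
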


\begin{proof}
It holds that $L_t = \sup_{0 \leq s \leq t} (\xi + W_s)_-$ for $t \in [0, \infty)$, so that
\begin{equation} \label{eq:est_av_local_time}
    f_t = \ev \sup_{0 \leq s \leq t} (\xi + W_s)_- = \int_{[0, \infty)} \ev\biggl[\sup_{0 \leq s \leq t} (x + W_s)_-\biggr] \, \d \L(\xi)(x),
\end{equation}
where $\L(\xi)$ denotes the law of $\xi$. For $x \geq 0$, we define the stopping time $\tau_x = \inf\{t > 0 \mathpunct{:} W_t = -x\}$. By the strong Markov property, $B$ defined by $B_t = W_{\tau_x + t} - W_{\tau_x} = W_{\tau_x + t} + x$ for $t \geq 0$ is a Brownian motion independent of $\tau_x$. We can write
\begin{align} \label{eq:lower_bound_sup_bm}
\begin{split}
    \ev\sup_{0 \leq s \leq t} (x + W_s)_- &= \ev\biggl[\bf{1}_{t \leq \tau_x} \sup_{0 \leq s \leq t} (x + W_s)_-\biggr] + \ev\biggl[\bf{1}_{t > \tau_x} \sup_{0 \leq s \leq t} (x + W_s)_-\biggr] \\
    &= \ev\biggl[\bf{1}_{t > \tau_x} \ev\biggl[\sup_{0 \leq s \leq t - \tau_x} (B_s)_- \bigg\vert \tau_x \biggr]\biggr] \\
    &= \ev\bigl[\bf{1}_{t > \tau_x} \ev[ \lvert B_{t - \tau_x} \rvert \vert \tau_x]\bigr] \\
    &= \biggl(\frac{2}{\pi}\biggr)^{1/2} \ev\bigl[\bf{1}_{t > \tau_x} (t - \tau_x)^{1/2}\bigr],
\end{split}
\end{align}
where we used that $x + W_s \geq 0$ for $s \leq \tau_x$ in the first line, the tower property of conditional expectations in the second line, and independence of $B_{t-\tau_x}$ from $\tau_x$ along with the reflection principle for the third line.

Next, we can find $x_0$ large enough such that $\pr(\xi \leq x_0) \geq 1/2$. Note that $\tau_{x_0} < \infty$ a.s.\@, so that there exists a $t_0 > 0$ for which $\pr(\tau_{x_0} \leq t_0) \geq 1/2$. Moreover, since $\tau_x \leq \tau_{x_0}$ for $x \leq x_0$, we obtain $\pr(\tau_x \leq t_0) \geq 1/2$. Consequently, for $x \leq x_0$ and $t > t_0$, we have
\begin{equation*}
    \ev\bigl[\bf{1}_{t > \tau_x} (t - \tau_x)^{1/2}\bigr] \geq \ev\bigl[\bf{1}_{\tau_x \leq t_0} (t - t_0)^{1/2}\bigr] \geq \frac{(t - t_0)^{1/2}}{2}.
\end{equation*}
Combining the above with Equations \eqref{eq:est_av_local_time} and \eqref{eq:lower_bound_sup_bm} yields that $f_t \geq \sqrt{\frac{2}{\pi}} \frac{\sqrt{t - t_0}}{4}$ for $t \geq t_0$ as required. 
\end{proof}

\begin{lemma} \label{lem:profile_parameter}
Let $\gamma_{\alpha}=c_{\alpha}^2(1-\alpha)$, where $c_{\alpha}$ is the normalisation constant introduced in Theorem \ref{thm:mfstationary}. Then $\lim_{\alpha \searrow 0} \gamma_{\alpha} = 2/\pi$ and $\lim_{\alpha \nearrow 1} \gamma_{\alpha} = 1$.
\end{lemma}

\begin{proof}
According to Remark \ref{rem:interpolation}, the constant $\gamma_{\alpha}$ is such that the density
\begin{align*}
    \gamma_{\alpha}\exp\biggl(-\gamma_{\alpha}\alpha x - \gamma_{\alpha}(1 - \alpha)\frac{x^2}{2}\biggr)\bf{1}_{x\ge0}
\end{align*}
integrates to one and has unit mean. To derive the two limits of $\gamma_{\alpha}$, we first show that $\gamma_{\alpha} \leq 1$ for $\alpha \in (0, 1)$. By convexity of the exponential function, we have
\begin{align*}
    1 &= \int_0^{\infty} x \biggl(\gamma_{\alpha} \exp\biggl(-\gamma_{\alpha} \alpha x - \gamma_{\alpha} (1 - \alpha)\frac{x^2}{2}\biggr)\biggr) \, \d x \\
    &\leq \alpha \int_0^{\infty} x\Bigl(\gamma_{\alpha} e^{-\gamma_{\alpha}x}\Bigr) \, \d x + (1 - \alpha) \int_0^{\infty} x \Bigl(\gamma_{\alpha} e^{-\gamma_{\alpha} x^2/2}\Bigr) \, \d x \\
    &= \frac{\alpha}{\gamma_{\alpha}} + (1 - \alpha).
\end{align*}
Rearranging this inequality yields $\gamma_{\alpha} \leq 1$ as required.

Suppose now that $(\alpha_n)_{n \geq 1}$ is any sequence in $(0, 1)$ such that $\alpha_n \to 1$ as $n \to \infty$ and such that $(\gamma_{\alpha_n})_{n \geq 1}$ has a limit $\gamma \in [0, 1]$. Then taking the limit $n \to \infty$ in the expression
\begin{equation*}
    1 = \int_0^{\infty} x \biggl(\gamma_{\alpha_n} \exp\biggl(-\gamma_{\alpha_n} \alpha_n x - \gamma_{\alpha_n} (1 - \alpha_n)\frac{x^2}{2}\biggr)\biggr) \, \d x
\end{equation*}
yields $1 = \int_0^{\infty} x (\gamma e^{-\gamma x}) \, \d x = 1/\gamma$, so that $\gamma = 1$. Since this is true for any choice of subsequence $(\alpha_n)_{n \geq 1}$, it follows that $\lim_{\alpha \to 1} \gamma_{\alpha} = 1$. By similar reasoning, taking subsequences $(\alpha_n)_{n \geq 1}$ with $\lim_{n \to \infty} \alpha_n = 0$ and $\lim_{n \to \infty} \gamma_{\alpha_n} = \gamma \in [0, 1]$, but instead passing to the limit in the expression
\begin{equation*}
    1 = \int_0^{\infty} \biggl(\gamma_{\alpha_n} \exp\biggl(-\gamma_{\alpha_n} \alpha_n x - \gamma_{\alpha_n} (1 - \alpha_n)\frac{x^2}{2}\biggr)\biggr) \, \d x,
\end{equation*}
we get that $1 = \int_0^{\infty} \gamma e^{-\gamma x^2/2} = \sqrt{\pi\gamma/2}$. Hence, $\lim_{\alpha \searrow 0} \gamma_{\alpha} = 2/\pi$.
\end{proof}

Let $(\Omega, \F, \bb{F}, \pr)$ be a filtered probability space. Let $\tau \in [0, \infty]$ be a stopping time and let $\bf{L} = (L^1, \dots, L^N)$ be an $N$-tuple of nondecreasing continuous adapted stochastic processes on $[0, \tau)$ started from the origin. Define $\cal{S}$ to be the set of $N$-tuples $\bf{F} = (F^1, \dots, F^N)$ of nondecreasing continuous adapted stochastic processes on $[0, \tau)$ started from the origin such that a.s.\@ $F^i_t - F^i_s \leq L^i_t - L^i_s$ for $0 \leq s \leq t < \tau$ and $i \in [N]$. On $\cal{S}$, we introduce the partial order ``$\ll$'' given by $\bf{F}^1 \ll \bf{F}^2$ if a.s.\@ $F^{1, i}_t - F^{1, i}_s \leq F^{2, i}_t - F^{2, i}_s$ for $0 \leq s \leq t < \tau$, $i \in [N]$, and $\bf{F}^1$, $\bf{F}^2 \in \cal{S}$.

\begin{lemma} \label{lem:lattice}
The set $\cal{S}$ is a complete lattice with respect to the partial order ``$\ll$''.
\end{lemma}

\begin{proof}
Clearly $\cal{S}$ forms a lattice with respect to the partial order ``$\ll$'', so we only have to show that it is complete, i.e.\@ every subset of $\cal{S}$ has a least upper bound (called join) and a greatest lower bound (called meet) in $\cal{S}$. Let $K$ be an arbitrary subset of $\cal{S}$. We will construct a least upper bound for $K$ in $\cal{S}$. The construction of the greatest lower bound is analogous. For any rational $t \in [0, \infty)$ and $i \in [N]$, let $F^{\ast, i}_t$ denote the essential supremum of $F^i_{t \land \tau}$ over $\bf{F} \in K$. Then $F^{\ast, i}_t$ is an $\F_t$-measurable random variable such that $F^{\ast, i}_t \geq F^i_{t \land \tau}$ a.s.\@ for every $\bf{F} \in K$. Thus, for any $\bf{F} \in K$, we have a.s.\@
\begin{equation} \label{eq:diff_est}
    F^i_{t \land \tau} - F^{\ast, i}_s \leq F^i_{t \land \tau} - F^i_{t \land \tau} \leq L^i_{t \land \tau} - L^i_{s \land \tau}
\end{equation}
for rationals $0 \leq s \leq t$. Taking the essential supremum with respect to $\bf{F} \in K$ implies that a.s.\@ $F^{\ast, i}_t - F^{\ast, i}_s \leq L^i_{t \land \tau} - L^i_{s \land \tau}$. Thus, modifying $F^{\ast, i}$ on a nullset if necessary, we can extend $F^{\ast, i}$ to a continuous process on $[0, \infty)$ by setting $F^{\ast, i}_t = \lim_{s \nearrow t, s \in \bb{Q}_+} F^{\ast, i}_s$ for irrational $t \in [0, \infty)$. Here $\bb{Q}_+$ denotes the positive rational numbers. Moreover, in view of \eqref{eq:diff_est}, it holds that a.s.\@ $F^{\ast, i}_t - F^{\ast, i}_s \leq L^i_{t \land \tau} - L^i_{s \land \tau}$ for all $0 \leq s \leq t$. Hence, the process $\bf{F}^{\ast} = (\bf{F}^{\ast}_t)_{t  \in [0, \tau)} \in \cal{S}$ given by $\bf{F}^{\ast}_t = (F^{\ast, 1}_t, \dots, F^{\ast, N}_t)$ for $t \in [0, \tau)$ bounds $K$ from above.

It remains to show that $\bf{F}^{\ast}$ is the least upper bound of $K$. Let $\tilde{\bf{F}}$ be any other upper bound of $K$, meaning that $F^i_t - F^i_s \leq \tilde{F}^i_t - \tilde{F}^i_s$ for all $0 \leq s \leq t < \tau$, $i \in [N]$, and $\bf{F} \in K$. Then arguing as in \eqref{eq:diff_est} and below, we first see that
\begin{equation*}
    F^i_t - F^{\ast, i}_s \leq F^i_t - F^i_s \leq \tilde{F}^i_t - \tilde{F}^i_s
\end{equation*}
for all $\bf{F} \in K$ and upon taking the essential supremum with respect to $\bf{F} \in K$ obtain that $F^{\ast, i}_t - F^{\ast, i}_s \leq \tilde{F}^i_t - \tilde{F}^i_s$ for $0 \leq s \leq t < \tau$ and $i \in [N]$. This concludes the proof.
\end{proof}

Let $\cal{C}$ denote the space of $\varphi \in C^2(\R)$ such that $\varphi$, $\partial_x \varphi$, and $\partial_x^2 \varphi$ are bounded and $\partial_x \varphi(0) = 0$. Fix $T > 0$. For a nondecreasing $f \in C_b([0, T)) \cap W^{1, 1}_{\textup{loc}}((0, T))$ and nonnegative $m \in L^1(\R_+)$ with $\int_0^{\infty} m(x) \, \d x = 1$, let us consider the PDE
\begin{equation} \label{eq:pde_fixed}
    \d \langle \mu_t, \varphi\rangle = - \dot{f}_t\langle \mu_t, \partial_x \varphi\rangle \, \d t + \frac{1}{2} \langle \mu_t, \partial_x^2 \varphi\rangle \, \d t
\end{equation}
for $\varphi \in \cal{C}$, with initial condition $\mu_0 = m$.
Then we have the following superposition result.

\begin{proposition} \label{prop:pde_fixed_super}
Let $\mu \in C([0, T); L^1(\R_+)) \cap L^1_{\textup{loc}}([0, T); W^{1, 1}(\R_+))$ be a solution to PDE \eqref{eq:pde_fixed}. Then $\mu_t = \L(X_t)$, where $(X, L)$ is the solution to the Skorokhod problem for $\xi + W - f$, where $\L(\xi) = m$ and $W$ is a Brownian motion independent of $\xi$.
\end{proposition}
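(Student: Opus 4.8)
The plan is to verify that the flow of marginals $\nu_t := \L(X_t)$ of the Skorokhod solution $(X,L)$ for $\xi + W - f$ solves the same linear PDE \eqref{eq:pde_fixed}, and then to invoke a uniqueness result for that PDE to conclude $\mu_t = \nu_t$ for all $t \in [0,T)$. For the first half, I would apply It\^o's formula to $\varphi(X_t)$ for $\varphi \in \cal{C}$, exactly as in \eqref{eq:ito_for_nsp}: since $\partial_x\varphi(0) = 0$ and $L$ increases only on $\{X_t = 0\}$, the $\d L_t$ term drops, leaving
\begin{equation*}
    \d\varphi(X_t) = \partial_x\varphi(X_t)\,\d W_t - \dot f_t\,\partial_x\varphi(X_t)\,\d t + \tfrac12 \partial_x^2\varphi(X_t)\,\d t.
\end{equation*}
Taking expectations and using that $\partial_x\varphi$ is bounded (so the stochastic integral is a true martingale) gives $\d\langle \nu_t,\varphi\rangle = -\dot f_t\langle\nu_t,\partial_x\varphi\rangle\,\d t + \tfrac12\langle\nu_t,\partial_x^2\varphi\rangle\,\d t$, which is \eqref{eq:pde_fixed}. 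A small care point: $f$ is only in $W^{1,1}_{\mathrm{loc}}$, not $C^1$, so $\dot f_t$ exists only a.e.; this is harmless since the PDE is an integral identity in $t$, and I would write everything in integrated form $\langle\nu_t,\varphi\rangle - \langle\nu_0,\varphi\rangle = -\int_0^t \dot f_s\langle\nu_s,\partial_x\varphi\rangle\,\d s + \tfrac12\int_0^t\langle\nu_s,\partial_x^2\varphi\rangle\,\d s$, justified by stochastic Fubini.

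The second half is the uniqueness of solutions to \eqref{eq:pde_fixed} within the class $C([0,T);L^1(\R_+)) \cap L^1_{\mathrm{loc}}([0,T);W^{1,1}(\R_+))$ — this is where the hypotheses on $\mu$ are needed and is the main obstacle. The natural route is a duality/adjoint argument: for fixed $t_1 \in (0,T)$ and a test function $\psi$, solve the backward (dual) heat equation $\partial_s u + \tfrac12\partial_x^2 u - \dot f_s \partial_x u = 0$ on $[0,t_1]$ with terminal data $u(t_1,\cdot) = \psi$ and Neumann-type condition $\partial_x u(s,0) = 0$, then pair a solution $\mu$ of \eqref{eq:pde_fixed} against $u$ and integrate by parts in time to show $\langle \mu_{t_1},\psi\rangle$ is determined by $\mu_0 = m$. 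The $W^{1,1}$-regularity of $\mu$ in space is exactly what makes the spatial integration by parts legitimate (it lets one transfer a derivative onto $\mu$ and handle the boundary term at $x=0$), and the $L^1$-continuity in time controls the endpoint terms. Alternatively — and this may be cleaner given the paper's structure — one can reduce to a known result: the substitution that removes the drift, or a Girsanov/time-change change of variables, turns \eqref{eq:pde_fixed} into a standard heat equation on the half-line with reflecting boundary, whose Fokker--Planck/martingale-problem well-posedness is classical; I would cite the relevant statement (e.g.\ along the lines of the superposition principle of Figalli or Trevisan, adapted to the reflecting half-line) rather than reprove it.

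Assembling: first establish that $\nu_t = \L(X_t)$ solves \eqref{eq:pde_fixed}, noting $\nu_0 = \L(\xi) = m$; then observe that $\nu \in C([0,T);L^1(\R_+)) \cap L^1_{\mathrm{loc}}([0,T);W^{1,1}(\R_+))$ — this requires knowing that $X_t$ has a density with the stated regularity, which follows from the smoothing of the heat semigroup together with the reflection (this itself may warrant a short lemma or a pointer to standard parabolic regularity, since $f$ is merely $W^{1,1}_{\mathrm{loc}}$); finally apply the uniqueness statement to conclude $\mu_t = \nu_t = \L(X_t)$ for all $t \in [0,T)$. I expect the genuinely delicate point to be the uniqueness step in the weak class stated, so I would spend the bulk of the write-up there, and keep the It\^o computation and the identification $\nu_0 = m$ brief.
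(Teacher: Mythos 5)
Your overall frame---apply It\^o's formula to show that $\nu_t = \L(X_t)$ solves \eqref{eq:pde_fixed}, then conclude by uniqueness for the linear Fokker--Planck equation---is a legitimate way to view the statement, and the It\^o half is routine and correct. But everything you defer is precisely where the proof lives, and as sketched it has genuine gaps. The superposition/uniqueness results you propose to cite (Figalli, Trevisan) are formulated on the whole space without boundary; the difficulty here is the reflecting boundary combined with a drift $-\dot f_t$ that is only $L^1_{\textup{loc}}$ in time, so ``adapting to the reflecting half-line'' is not a citation but the actual content. The alternative reductions do not work as stated: Girsanov requires $\dot f \in L^2_{\textup{loc}}$, which $f \in W^{1,1}_{\textup{loc}}$ does not provide, and removing the spatially constant drift by a moving frame shifts the boundary, turning the problem into a heat equation (equivalently, reflected Brownian motion) in a \emph{time-dependent} domain, not the standard Neumann half-line problem. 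Your duality route likewise needs a sufficiently regular solution of the backward equation with Neumann condition and merely time-integrable drift, whose construction is nontrivial for the same reason; and if you instead run uniqueness within the class $C([0,T);L^1(\R_+)) \cap L^1_{\textup{loc}}([0,T);W^{1,1}(\R_+))$, you must additionally prove that $\L(X_t)$ has a density in that class, a regularity statement you flag but do not establish and which is itself delicate for such irregular $f$.

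The paper sidesteps all of this by truncating the drift: with $f^n_t = \int_0^t \dot f_s \wedge n \, \d s$, the PDE \eqref{eq:pde_fixed_n} has bounded drift, is classically well posed, and its solution is the law of the reflected SDE driven by $-\d f^n_t$; these laws converge to $\L(X_t)$, while an $L^2$ energy estimate at the level of cumulative distribution functions, with spatial cutoffs $\kappa_k$ and a Fatou argument in $k$, shows they also converge to the given $\mu_t$. That comparison needs no dual equation, no Girsanov, and no regularity of $\L(X_t)$. To complete your proposal you would either have to carry out the dual construction in detail for $L^1$-in-time drift on the half-line, or switch to an approximation-and-stability argument of the paper's kind; as it stands, the key uniqueness step is not proved.
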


\begin{proof}
Let $\mu$ be as in the statement of the proposition and define $f^n \in W^{1, \infty}((0, T))$ by $f^n_t = \int_0^t \dot{f}_s \land n \, \d s$. Then it follows from standard results that the PDE
\begin{equation} \label{eq:pde_fixed_n}
    \d \langle \mu^n_t, \varphi\rangle = -\dot{f}^n_t \langle \mu^n_t, \partial_x \varphi\rangle \, \d t + \frac{1}{2} \langle \mu^n_t, \partial_x^2 \varphi\rangle \, \d t, \qquad \mu^n_0 = \mu_0,
\end{equation}
for $\varphi \in \cal{C}$ has a unique solution in $C([0, T); \P([0, \infty)))$ given by the law $\mu^n_t = \L(X^n_t)$ of the solution to the reflected SDE
\begin{equation*}
    \d X^n_t = \d W_t - \d f^n_t + \d L^n_t
\end{equation*}
with initial condition $\xi$. Moreover, $\mu^n \in C([0, T); L^1(\R_+)) \cap L^1_{\textup{loc}}([0, T); W^{1, 1}(\R_+))$ and $\mu^n_t$ converges to $\L(X_t)$ in $\P([0, \infty))$ for all $t \in [0, T)$. Thus, if we manage to prove that $\mu^n_t$ also tends to $\mu_t$, we are done. 

Let us introduce the cumulative distribution functions $F^n_t(x) = \mu^n_t([0, x])$ and $F^0_t(x) = \mu_t([0, x])$ for $x \in [0, \infty)$ and $t \in [0, T)$. For $n \geq 0$, the function $\partial_x F^n_t$ is an element of $C([0, T); L^1(\R_+)) \cap L^1_{\textup{loc}}([0, T); W^{1, 1}(\R_+))$, while $F^n \in C([0, T); L^1([0, x]))$ for any $x > 0$. Moreover, appealing to PDE \eqref{eq:pde_fixed_n}, it is not difficult to deduce that $F^n$ solves the PDE
\begin{equation*}
    \d F^n_t(x) = \dot{f}^n_t \partial_x F^n_t(x) \, \d t + \frac{1}{2} \partial_x^2 F^n_t(x) \, \d x 
\end{equation*}
for a.e.\@ $x \in \R_+$. Let us set $\Delta^n_t = F^0_t - F^n_t$. Then from the above equation, we derive
\begin{align} \label{eq:diff_ltwo}
\begin{split}
    \d \lvert \Delta^n_t(x)\rvert^2 &= \Bigl(2 \dot{f}^n_t \Delta^n_t(x) \partial_x \Delta^n_t(x)\Bigr) \, \d t + \Bigl(2 (\dot{f}_t - \dot{f}^n_t) F^0_t(x) \partial_x \Delta^n_t(x)\Bigr) \, \d t \\
    &\ \ \ + \bigl(\Delta^n_t(x) \partial_x^2 \Delta^n_t(x)\bigr) \, \d t.
\end{split}
\end{align}
Now, let $\kappa \in C^2(\R)$ be such that $\kappa(x) \in [0, 1]$, $\kappa(x) = 1$ for $x \in (-\infty, 0]$, and $\kappa(x) = 0$ for $x \in [1, \infty)$, and set $\kappa_k(x) = \kappa(x - k)$. We multiply both sides of \eqref{eq:diff_ltwo} by $\kappa_k$ and then integrate over $x \in [0, \infty)$ to obtain
\begin{align*}
    \d \lVert \sqrt{\kappa_k} \Delta^n_t\rVert_{L^2}^2 &\leq \Biggl( \dot{f}^n_t \int_k^{k + 1} \lvert \partial_x \kappa_k\rvert \lvert \Delta^n_t(x)\rvert^2 \, \d x\Biggr) \, \d t + \Bigl(2 (\dot{f}_t - \dot{f}^n_t) \bigl\langle \partial_x (\kappa_k F^0_t), \Delta^n_t\bigr\rangle\Bigr) \, \d t \\
    &\ \ \ + \biggl(\frac{1}{2}\int_0^{\infty} \partial_x \kappa_k(x) \partial_x\lvert \Delta^n_t(x)\rvert^2 \, \d x\biggr) \, \d t - \bigl\lVert \sqrt{\kappa_k} \partial_x \Delta^n_t\bigr\rVert_{L^2}^2 \, \d t,
\end{align*}
where we used that $\Delta^n_t(x) \partial_x \Delta^n_t(x) = \frac{1}{2} \partial_x \lvert \Delta^n_t(x)\rvert^2$ and applied integration by parts three times. Applying integration by parts once more and dropping the negative term on the right-hand side, yields
\begin{align} \label{eq:cutoff_est}
\begin{split}
    \d \lVert \sqrt{\kappa_k} \Delta^n_t\rVert_{L^2}^2 &\leq \biggl(\frac{C_{\kappa}}{2}(2 \dot{f}^n_t + 1)\int_k^{k + 1} \lvert \Delta^n_t(x)\rvert^2 \, \d x\biggr) \, \d t \\
    &\ \ \ + \Bigl(2 (\dot{f}_t - \dot{f}^n_t) \bigl\langle \partial_x (\kappa_k F^0_t), \Delta^n_t\bigr\rangle\Bigr) \, \d t,
\end{split}
\end{align}
where $C_{\kappa} = \lVert \partial_x \kappa\rVert_{\infty} + \lVert \partial_x^2 \kappa\rVert_{\infty}$. Now, we take the limit superior as $n \to \infty$ on both sides, using the fact that $F^n_t(x)$ converges to the cumulative distribution function $F_t(x)$ of $X_t$ at continuity points $x$ of $F_t$ and, therefore, almost everywhere. Setting $\Delta_t(x) = F_t(x) - F^0_t(x)$, this implies that
\begin{equation} \label{eq:delta_cdf_est}
    \lVert \sqrt{\kappa_k} \Delta_t\rVert_{L^2}^2 \leq C \int_0^t \biggl(\int_k^{k + 1} \lvert \Delta_s(x)\rvert^2 \, \d x\biggr) \, \d s,
\end{equation}
where the second summand on the right-hand side of \eqref{eq:cutoff_est} vanishes as $n \to \infty$, since $\dot{f}^n_t = \dot{f}_t \land n \to \dot{f}_t$ and $\langle \partial_x (\kappa_k F^0_t), \Delta^n_t\rangle$ is bounded uniformly in $n \geq 1$. Finally, let us note that $F^0_t(x)$ and $F_t(x)$ both tend to $1$ as $x \to \infty$ because they are cumulative distribution function. Consequently, we have $\Delta_t(x) \to 0$ as $x \to \infty$. Hence, taking the limit inferior as $k \to \infty$ in \eqref{eq:delta_cdf_est} and applying Fatou's lemma, we obtain $\lVert \Delta_t\rVert_{L^2}^2 \leq \liminf_{k \to \infty} \lVert \sqrt{\kappa_k} \Delta_t\rVert_{L^2}^2 = 0$. This concludes the proof.
\end{proof}

\sloppypar
\printbibliography

\end{document}